\documentclass{article}

\usepackage{amsmath, amssymb, amsthm}
\usepackage{mathrsfs}   
\usepackage[all]{xy}    
\usepackage{graphicx}   
\usepackage{enumerate}

\usepackage{geometry}
\usepackage{layout}
\numberwithin{equation}{section} 

\newtheorem*{them*}{Theorem}
\newtheorem*{themA}{Theorem A}
\newtheorem*{themB}{Theorem B}
\newtheorem*{themC}{Theorem C}
\newtheorem*{themD}{Theorem D}
\newtheorem*{themE}{Theorem E}
\newtheorem*{mtA}{Main Theorem A}
\newtheorem*{mtB}{Main Theorem B}
\newtheorem{lemm}{Lemma}[section]
\newtheorem{defi}[lemm]{Definition}
\newtheorem{them}[lemm]{Theorem}
\newtheorem{prop}[lemm]{Proposition}
\newtheorem{cor}[lemm]{Corollary}
\newtheorem{rem}{Remark}

\begin{document}

    \title{On the higher algebraic $K$-groups of arithmetically equivalent number fields}

    \author{Ryo Komiya\footnote{Department of Mathematics, Institute of Science Tokyo, 2-12-1, O-okayama, Meguro-ku, Tokyo 152-8551, Japan, 
e-mail: \texttt{komiya.r.ab@m.titech.ac.jp}}}

    \date{} 

    \maketitle

    \begin{abstract}
        In this paper, based on the structure of higher algebraic $K$-groups of the rings of integers of number fields, we introduce new equivalence relations between number fields called $K$-equivalence and almost $K$-equivalence, and investigate their relationships with arithmetical equivalence and local integral equivalence. Building upon the classical properties of arithmetically equivalent number fields studied by R. Perlis and on the pioneering work by Komatsu, we apply the Rost-Voevodsky theorem (the Quillen-Lichtenbaum conjecture) for odd primes $p$, thereby analyzing algebraic $K$-groups within the framework of continuous \'{e}tale cohomology from a more modern perspective.
        
        As our main results, by utilizing permutation representations of global Galois groups and the data of local decomposition groups, we refine Komatsu's previous results in the range $p \neq 2$ and describe the conditions for number fields to be (almost) $K$-equivalent. Through these developments, we clarify how the arithmetic information reflected in higher $K$-groups resonates with the special values of zeta functions and Galois representations.
    \end{abstract}

    \section{Introduction}
    
    Let $F$ be a number field, and let $\mathcal{O}_F$ denote the ring of integers of $F$. Quillen’s algebraic $K$-groups ${K_n(\mathcal{O}_F)}$ generalize classical arithmetic invariants such as the ideal class group and the unit group to higher degrees, and their structure encodes deep arithmetic information about the number field. In this paper, for two number fields $k_1$ and $k_2$, if there is an isomorphism $K_n(\mathcal{O}_{k_1}) \simeq K_n(\mathcal{O}_{k_2})$ for all non-negative integers $n$, we call them \textit{$K$-equivalent}; on the other hand, if there is an isomorphism $K_n(\mathcal{O}_{k_1})[p^\infty] \simeq K_n(\mathcal{O}_{k_2})[p^\infty]$ for almost all primes $p$ independent of $n$, we call them \textit{almost $K$-equivalent}, where, for an abelian group $G$, the notation $G[p^\infty]$ denotes the $p$-primary part of $G$. One of the goals of this paper is to find a sufficient condition for (almost) $K$-equivalence. In the Main Theorem A, we will show that a sufficient condition for almost $K$-equivalence is "arithmetical equivalence". Furthermore, in Main Theorem B, we presented a sufficient condition for $K$-equivalence.

    \begin{mtA}[Corollary \ref{wengrtuan3oin}]
        If $k_1, k_2$ are number fields such that $\zeta_{k_1}(s) = \zeta_{k_2}(s)$, then all algebraic $K$-groups of odd degree are isomorphic:
  \[
    K_{2n+1}(\mathcal{O}_{k_1}) \simeq K_{2n+1}(\mathcal{O}_{k_2}).
  \]
  Furthermore, for any prime $p$ not dividing $2\nu_{k_1,k_2}$, there is an isomorphism of $K$-groups
  \[
    K_{2n}(\mathcal{O}_{k_1})[p^\infty] \simeq K_{2n}(\mathcal{O}_{k_2})[p^\infty].
  \]
  Consequently, $k_1$ and $k_2$ are almost $K$-equivalent. In particular, if either $k_1$ or $k_2$ is totally imaginary, the isomorphism $K_{2n}(\mathcal{O}_{k_1})[p^\infty] \simeq K_{2n}(\mathcal{O}_{k_2})[p^\infty]$ holds for any prime $p$ not dividing $\nu_{k_1,k_2}$.
    \end{mtA}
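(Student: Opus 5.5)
Since $\zeta_{k_1}(s)=\zeta_{k_2}(s)$, I use Perlis's theorem to work in a common Galois closure. Let $N/\mathbb{Q}$ be a finite Galois extension containing $k_1,k_2$, put $G=\mathrm{Gal}(N/\mathbb{Q})$ and $H_i=\mathrm{Gal}(N/k_i)$. Then $\mathbb{Q}[G/H_1]\simeq\mathbb{Q}[G/H_2]$, i.e.\ $H_1,H_2$ are Gassmann equivalent. This also gives equal degree, equal signature $(r_1,r_2)$, equal numbers of roots of unity $w(k_1)=w(k_2)$, and the same primes of $\mathbb{Q}$ ramify. I take $\nu_{k_1,k_2}$ to be the integer defined earlier in the paper, the index $[\mathbb{Z}[G/H_1]:\mathbb{Z}[G/H_2]]$-type obstruction to integral equivalence (it divides $|G|$). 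For $p\nmid \nu_{k_1,k_2}$ this gives $\mathbb{Z}_p[G/H_1]\simeq\mathbb{Z}_p[G/H_2]$ as $\mathbb{Z}_p[G]$-lattices.

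For $p$ odd I apply Rost--Voevodsky (Quillen--Lichtenbaum) in the form: for $i\ge 1$ and $j=1,2$,
\[
K_{2i-j}(\mathcal{O}_F)\otimes\mathbb{Z}_p\simeq H^j_{\mathrm{cont}}(\mathcal{O}_F[1/p],\mathbb{Z}_p(i)).
\]
I write $\mathcal{O}_{k}[1/p]$-cohomology via Shapiro's lemma as cohomology of $G_S=\mathrm{Gal}(\mathbb{Q}_S/\mathbb{Q})$, with $S$ the primes above $p$, $\infty$, and the ramified primes. Thus $H^j(G_{S,k},\mathbb{Z}_p(i))=H^j(G_S,\mathrm{Ind}_{H}^{G}\mathbb{Z}_p(i))=H^j(G_S,\mathbb{Z}_p[G/H]\otimes\mathbb{Z}_p(i))$. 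Because $S$ is the same for $k_1$ and $k_2$, an isomorphism of $\mathbb{Z}_p[G]$-modules $\mathbb{Z}_p[G/H_1]\simeq\mathbb{Z}_p[G/H_2]$ yields $K_m(\mathcal{O}_{k_1})[p^\infty]\simeq K_m(\mathcal{O}_{k_2})[p^\infty]$ for all $m\ge1$ whenever $p\nmid 2\nu_{k_1,k_2}$. The torsion-freeness of the $\mathbb{Z}_p$-lattice quotient causes no trouble, since ranks agree by Borel via the signature.

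For odd degrees I want the full integral isomorphism, so I must also treat the primes $p\mid\nu$ and $p=2$. Borel gives $K_{2n+1}(\mathcal{O}_k)\simeq\mathbb{Z}^{d_n}\oplus\mathrm{tors}$, where $d_n$ depends only on $(r_1,r_2)$. The torsion of $K_{2n+1}$ for $n\ge1$ is known to be cyclic: by the Quillen--Lichtenbaum computation of $H^1$ torsion it equals $H^0(k,\mathbb{Q}_p/\mathbb{Z}_p(n+1))$, of order $w_{n+1}(k)$ up to the $2$-adic correction in the real case, which the Rognes--Weibel description handles. The invariant $w_{n+1}(k)$ is determined by the degrees $[k(\mu_{p^\infty}):k]$, i.e.\ by $k\cap\mathbb{Q}(\mu_{p^\infty})$. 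This is an abelian subfield, and Gassmann-equivalent fields have the same maximal abelian subfield because $H_1G'=H_2G'$; so the invariants agree. For $K_1=\mathcal{O}^\times$ the claim follows from equal $w$ and rank. For $K_0$ the statement is not claimed, and $K_0(\mathcal{O})=\mathbb{Z}\oplus \mathrm{Cl}$ is even, covered only up to $\nu$. For the last assertion, if $k_i$ is totally imaginary (hence both are, by equal signature), the $2$-adic Quillen--Lichtenbaum isomorphism also holds, so the same Shapiro argument works at $p=2$ when $2\nmid\nu$.

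The main obstacle is the odd-degree $2$-primary torsion for fields with real places. There I would cite the Rognes--Weibel formula, which expresses $K_{2n+1}(\mathcal{O}_k)[2^\infty]$ in terms of $r_1$ and $w_{n+1}$ alone; both of these agree for $k_1$ and $k_2$.
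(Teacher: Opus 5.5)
Your odd-degree argument is sound and differs from the paper's. The paper cites Phagan for $K_{2n+1}(k_1)\simeq K_{2n+1}(k_2)$ and uses $K_{2n+1}(\mathcal{O}_k)\simeq K_{2n+1}(k)$. You instead combine Borel's ranks, the description of $K_{2n+1}(\mathcal{O}_k)_{\rm tors}$ via $w_{n+1}(k)$ (with the Rognes--Weibel $2$-adic correction), and the fact that Gassmann equivalence gives $H_1G'=H_2G'$, hence $k_1\cap\mathbb{Q}(\mu_\infty)=k_2\cap\mathbb{Q}(\mu_\infty)$.

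The even-degree part, however, has a genuine gap, exactly at the point the paper's Section 3 exists to handle. To apply Shapiro you put the primes above ramified $q\neq p$ into $S$. You then treat $H^2(G_{k,S},\mathbb{Z}_p(n+1))$ as $H^2(\mathcal{O}_k[1/p],\mathbb{Z}_p(n+1))$, but it is the cohomology of $\operatorname{Spec}\mathcal{O}_{k,S}$. For odd $p$ the localization sequence reads
\[
0\to H^2(\mathcal{O}_k[1/p],\mathbb{Z}_p(n+1))\to H^2(G_{k,S},\mathbb{Z}_p(n+1))\to\bigoplus_{v\in S\setminus(P_\infty\cup P_p)}\mathbb{Z}_p/(Nv^{n}-1)\to 0 ,
\]
and the cokernel is generally nonzero. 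So your argument only yields $K_{2n}(\mathcal{O}_{k_1,S_1})[p^\infty]\simeq K_{2n}(\mathcal{O}_{k_2,S_2})[p^\infty]$, and you cannot descend from it. Even though the cokernels agree abstractly, isomorphic middle terms and quotients do not force isomorphic subgroups: $\mathbb{Z}/p^2$ and $(\mathbb{Z}/p)^2$ both sit inside $\mathbb{Z}/p^2\oplus\mathbb{Z}/p$ with quotient $\mathbb{Z}/p$.

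The missing idea is that the isomorphism induced by $\mathbb{Z}_p[G/H_1]\simeq\mathbb{Z}_p[G/H_2]$ carries one subgroup onto the other. The paper first describes $K_{2n}(\mathcal{O}_k)[p^\infty]$ as the kernel of $\bigoplus_v\mathrm{res}_v$ into $H^2(D_{v,S},\mathbb{Z}_p(n+1))$ (Proposition~\ref{eni4vmo34vo}, Lemma~\ref{drghiown4ogtmeo4mo}). It then checks via the Mackey decomposition that, under Shapiro, these local maps become $\mathrm{res}_q$ to $D_{q,S}$ on the coinduced module (Proposition~\ref{setjfiwnmtgongto4etmnoe4}), which is functorial in the coefficients.

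Alternatively, you could work on $\operatorname{Spec}\mathbb{Z}[1/p]$ with the sheaf $\pi_*\mu_{p^m}^{\otimes n+1}=j_*\bigl(\mathbb{Z}/p^m[G/H]\otimes\mu_{p^m}^{\otimes n+1}\bigr)$. This sheaf depends only on the $G$-module, so there is no need to enlarge $S$. The same issue affects your $p=2$ totally imaginary case.

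There is also a smaller omission. The even-degree claim, and almost $K$-equivalence, include $n=0$. Your Quillen--Lichtenbaum isomorphism is only valid for $i\ge 2$ and does not reach $K_0$; indeed $H^2(\mathcal{O}_k[1/p],\mathbb{Z}_p(1))$ is not $\mathrm{Cl}(k)[p^\infty]$. Here you need Perlis's theorem that $\mathrm{Cl}(k_1)[p^\infty]\simeq\mathrm{Cl}(k_2)[p^\infty]$ for $p\nmid\nu_{k_1,k_2}$, as the paper uses.
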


    The definition of the number $\nu_{k_1,k_2}$ is given  before Corollary \ref{wengrtuan3oin}.

    \begin{mtB}[Corollary \ref{nr4githto4wtio4at}]
        If $k_1,k_2$ are locally integrally equivalent (see the definition immediately preceding Corollary \ref{nr4githto4wtio4at}) and totally imaginary then $k_1,k_2$ are $K$-equivalent. 
    \end{mtB}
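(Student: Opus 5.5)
The plan is to compare all $K$-groups of $\mathcal{O}_{k_1}$ and $\mathcal{O}_{k_2}$ degree by degree. For each $n$ we split $K_n(\mathcal{O}_{k_i})$ into a finitely generated free part and its $p$-primary torsion parts, and match the two fields piece by piece. Every piece except the even torsion is governed by the numbers $r_1,r_2$ of real and complex places, and these are arithmetic invariants. Local integral equivalence implies arithmetical equivalence, so Main Theorem A already gives $K_{2n+1}(\mathcal{O}_{k_1})\simeq K_{2n+1}(\mathcal{O}_{k_2})$. It also gives $K_{2n}(\mathcal{O}_{k_1})[p^\infty]\simeq K_{2n}(\mathcal{O}_{k_2})[p^\infty]$ for $p\nmid \nu_{k_1,k_2}$; here we use that both fields are totally imaginary, which removes the prime $2$. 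For $n=0$ we have $K_0(\mathcal{O}_{k_i})\simeq \mathbb{Z}\oplus \mathrm{Cl}(k_i)$, and local integral equivalence is known (Perlis-type results) to give $\mathrm{Cl}(k_1)\simeq\mathrm{Cl}(k_2)$. Since $K_{2n}(\mathcal{O}_F)$ is finite for $n\ge1$, it remains to handle $K_{2n}(\mathcal{O}_{k_i})[p^\infty]$ for the finitely many primes $p\mid\nu_{k_1,k_2}$.

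For odd $p$ we use Rost--Voevodsky. It gives $K_{2n}(\mathcal{O}_F)[p^\infty]\simeq H^2_{\mathrm{et}}(\mathcal{O}_F[1/p],\mathbb{Z}_p(n+1))$. We write $k_i$ as the fixed field of a subgroup $H_i$ of $G=\mathrm{Gal}(N/\mathbb{Q})$, where $N$ is a common Galois closure. Shapiro's lemma then rewrites this group as $H^2(G_{\mathbb{Q},S},\mathbb{Z}_p[G/H_i]\otimes\mathbb{Z}_p(n+1))$, where $S$ contains $p$, $\infty$ and the primes ramified in $N$. Local integral equivalence says the permutation lattices $\mathbb{Z}_p[G/H_1]$ and $\mathbb{Z}_p[G/H_2]$ are isomorphic as $\mathbb{Z}_p[G]$-modules; this is the integral form of the condition, as opposed to the rational form $\mathbb{Q}[G/H_1]\simeq\mathbb{Q}[G/H_2]$. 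Inflating along $G_{\mathbb{Q},S}\to G$, the coefficient modules become isomorphic as $G_{\mathbb{Q},S}$-modules. Hence the continuous cohomology groups are isomorphic, and so the $p$-parts agree for every odd $p$, including $p\mid\nu_{k_1,k_2}$.

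The main obstacle is $p=2$, which also has to be handled whenever $2\mid\nu_{k_1,k_2}$. The paper's étale framework is set up for odd $p$. Since the fields are totally imaginary, there is no real-place contribution, and the $2$-adic Quillen--Lichtenbaum comparison still identifies $K_{2n}(\mathcal{O}_F)\otimes\mathbb{Z}_2$ with $H^2_{\mathrm{et}}(\mathcal{O}_F[1/2],\mathbb{Z}_2(n+1))$ (Rognes--Weibel). The Shapiro-lemma argument then goes through verbatim with $\mathbb{Z}_2[G/H_i]$. If the paper does not want to invoke the $2$-adic theorem, I would fall back on this: for totally imaginary $F$, the $2$-part is cohomological and functorial in the Galois module. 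I would rely on the permutation-module isomorphism directly, being careful that the isomorphism class of the coefficient module is all that enters. Putting the pieces together gives $K_n(\mathcal{O}_{k_1})\simeq K_n(\mathcal{O}_{k_2})$ for all $n\ge0$.
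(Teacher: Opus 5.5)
There is a genuine gap, at the central step.

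Your overall structure matches the paper's. Odd degrees come from arithmetical equivalence, and $K_0$ comes from Perlis's Theorem C applied at every prime. You use finiteness of $K_{2n}(\mathcal{O}_F)$ for $n>0$. For the even torsion you use a Quillen--Lichtenbaum comparison (Rognes--Weibel at $p=2$, available because the fields are totally imaginary), Shapiro's lemma, and the isomorphism $\mathbb{Z}_p[G/H_1]\simeq\mathbb{Z}_p[G/H_2]$. Splitting into $p\nmid\nu_{k_1,k_2}$ and $p\mid\nu_{k_1,k_2}$ is unnecessary, since the paper simply applies Theorem~\ref{hguhtuoiwntoir3} at every $p$.

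The central step, however, is false as written. To make $G_{k_i,S}$ a subgroup of $G_{\mathbb{Q},S}$, $k_i$ must be unramified outside $S$, which is why you put the primes ramified in $N$ into $S$. But then $H^2(G_{\mathbb{Q},S},\mathbb{Z}_p[G/H_i]\otimes\mathbb{Z}_p(n+1))\simeq H^2(G_{k_i,S},\mathbb{Z}_p(n+1))$ is no longer $H^2(\mathcal{O}_{k_i}[1/p],\mathbb{Z}_p(n+1))$. It is the $p$-part of $K_{2n}$ of the ring of $S$-integers, and the localization sequence gives
\[
0\to K_{2n}(\mathcal{O}_{k_i})[p^\infty]\to H^2(G_{k_i,S},\mathbb{Z}_p(n+1))\to\bigoplus_{v\in S\setminus(P_\infty\cup P_p)}\mathbb{Z}_p/\bigl((\sharp k(v))^n-1\bigr)\to 0 .
\]
The cokernel is nonzero as soon as $p\mid(\sharp k(v))^n-1$ for one such $v$. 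So your argument only proves that the $p$-parts of $K_{2n}(\mathcal{O}_{k_1,S})$ and $K_{2n}(\mathcal{O}_{k_2,S})$ are isomorphic.

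You cannot descend from this by counting. The cokernels are isomorphic, but a finite abelian $p$-group can have non-isomorphic kernels of surjections onto the same group. For example, $(a,b)\mapsto b$ and $(a,b)\mapsto a \bmod p$ from $\mathbb{Z}/p^2\oplus\mathbb{Z}/p$ to $\mathbb{Z}/p$ have kernels $\mathbb{Z}/p^2$ and $(\mathbb{Z}/p)^2$. Your $p=2$ paragraph has the same problem. The claim that ``only the isomorphism class of the coefficient module enters'' is exactly what fails without further input.

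What is missing is a description of $K_{2n}(\mathcal{O}_{k_i})[p^\infty]$ inside the enlarged cohomology that the coefficient isomorphism visibly preserves. The paper supplies it in three steps.
\begin{enumerate}[(1)]
\item Corollary~\ref{segfunwntomnto4t} identifies $K_{2n}(\mathcal{O}_F)[p^\infty]$ with the kernel of $\bigoplus_v\mathrm{res}_v\colon H^2(G_{F,S},\mathbb{Z}_p(n+1))\to\bigoplus_{v\in S\setminus(P_\infty\cup P_p)}H^2(D_{v,S},\mathbb{Z}_p(n+1))$.
\item One chooses $S_i\setminus(P_{k_i,\infty}\cup P_{k_i,p})=\bigsqcup_{q\in W}P_{k_i,q}$ with a finite set $W$ of rational primes common to $i=1,2$. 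Proposition~\ref{setjfiwnmtgongto4etmnoe4} then shows, via the Mackey decomposition, that under Shapiro the maps $\bigoplus_{v\mid q}\mathrm{res}_v$ become $\mathrm{res}_q\colon H^2(G_{\mathbb{Q},S},M_i)\to H^2(D_{q,S},M_i)$, where $M_i=\operatorname{Coind}_{G_{k_i,S_i}}^{G_{\mathbb{Q},S}}\mathbb{Z}_p(n+1)$.
\item The map $\mathrm{res}_q$ is functorial in $M_i$, so any isomorphism $M_1\simeq M_2$ carries one kernel onto the other.
\end{enumerate}

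Alternatively, you could avoid enlarging $S$ by applying Shapiro in \'{e}tale cohomology over $\operatorname{Spec}\mathbb{Z}[1/p]$. For the finite morphism $\pi_i\colon\operatorname{Spec}\mathcal{O}_{k_i}[1/p]\to\operatorname{Spec}\mathbb{Z}[1/p]$ one has $H^2(\mathcal{O}_{k_i}[1/p],\mu_{p^m}^{\otimes(n+1)})\simeq H^2(\mathbb{Z}[1/p],\pi_{i*}\mathbb{Z}/p^m\otimes\mu_{p^m}^{\otimes(n+1)})$. Since $\mathcal{O}_{k_i}[1/p]$ is normal, $\pi_{i*}\mathbb{Z}/p^m\simeq j_*\bigl(\mathbb{Z}/p^m[G/H_i]\bigr)$, where $j$ is the inclusion of the generic point. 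The permutation-module isomorphism then gives an isomorphism of sheaves compatible in $m$.

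Either way, this step has to be supplied. As it stands, your proposal does not establish the even-degree isomorphism at the primes dividing $\nu_{k_1,k_2}$, which are precisely the primes Main Theorem A does not already cover.
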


Two number fields are said to be \textit{arithmetically equivalent} if they share the same Dedekind zeta function. Historically, the equivalence of number fields has been extensively studied from the viewpoint of arithmetical equivalence. Moreover, refinements of the Neukirch–Uchida theorem are proved using properties of arithmetic equivalence, and this equivalence plays an important role in the context of anabelian geometry as well. Since the zeta functions of $k_1/\mathbb{Q}$ and $k_2/\mathbb{Q}$ cannot coincide when both are Galois extensions, we usually restricts attention to number fields that are not Galois.

R. Perlis\cite{Perlis} demonstrated that arithmetically equivalent number fields share invariants such as the degree, the discriminant, and the number of roots of unity, whereas he also clarified that an isomorphism between the ideal class groups themselves is not necessarily induced\cite{Perlis3}. Nevertheless, Perlis proved that a pair of arithmetically equivalent number fields possesses nearly isomorphic ideal class groups:

\begin{them*}[Perlis, 1978]
  Let $k_1, k_2$ be arithmetically equivalent number fields, and let $L$ be the common Galois closure of $k_1$ and $k_2$ over $\mathbb{Q}$. Then for any prime $p$ coprime to $\nu_{k_1,k_2}$, there is an isomorphism of abelian groups\cite[Theorem 3]{Perlis2}:
  \[
    \operatorname{Cl}(k_1)[p^\infty] \simeq \operatorname{Cl}(k_2)[p^\infty].
  \]
\end{them*}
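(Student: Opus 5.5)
We prove Perlis' theorem by passing to $G=\operatorname{Gal}(L/\mathbb{Q})$ and comparing two permutation modules. Let $H_1,H_2\le G$ be the subgroups fixing $k_1,k_2$. By Gassmann's criterion, $\zeta_{k_1}=\zeta_{k_2}$ means $H_1$ and $H_2$ meet every conjugacy class of $G$ in the same number of elements. Equivalently, there is an isomorphism of $\mathbb{Q}[G]$-modules $\mathbb{Q}[G/H_1]\simeq\mathbb{Q}[G/H_2]$.

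\textbf{Step 1: integral intertwiners.} Clearing denominators in the rational isomorphism gives $\mathbb{Z}[G]$-homomorphisms
\[
\alpha:\mathbb{Z}[G/H_1]\to\mathbb{Z}[G/H_2],\qquad \beta:\mathbb{Z}[G/H_2]\to\mathbb{Z}[G/H_1]
\]
with $\beta\alpha=m_1\cdot\mathrm{id}$ and $\alpha\beta=m_2\cdot\mathrm{id}$ for positive integers $m_1,m_2$. Since $\mathbb{Z}[G]$-maps between permutation modules are given by double-coset matrices, $\alpha$ can be written as $gH_1\mapsto\sum_j a_{j}\,g\gamma_jH_2$, with $\gamma_j$ running over representatives of $H_1\backslash G/H_2$. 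I take $\nu_{k_1,k_2}$ to be (or to be divisible by) the primes that must divide such indices; for instance, one may take the gcd over all such pairs of $m_1m_2$. For $p\nmid\nu_{k_1,k_2}$ we then get $\mathbb{Z}_{(p)}[G/H_1]\simeq\mathbb{Z}_{(p)}[G/H_2]$.

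\textbf{Step 2: realize the intertwiners on class groups.} Each double coset $H_1\gamma H_2$ gives a correspondence $k_1\to k_2$. On ideals it acts by $\mathfrak a\mapsto N_{\gamma^{-1}k_1\cdot k_2/k_2}(\gamma^{-1}\mathfrak a\,\mathcal{O})$, that is, extension to $L^{H_1^\gamma\cap H_2}$ followed by the norm down to $k_2$. This correspondence preserves principal ideals and so induces maps $\operatorname{Cl}(k_1)\to\operatorname{Cl}(k_2)$. A cleaner way to say the same thing: $\operatorname{Cl}(k_i)=\operatorname{Cl}(L)^{H_i}$ up to the kernel and cokernel of extension/norm. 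The better formulation is the Mackey-functor one. The functor $H\mapsto\operatorname{Cl}(L^H)$, with norms and extensions as transfers and restrictions, is a cohomological Mackey functor on $G$, so it factors through the Hecke category of permutation modules: $\operatorname{Cl}(L^H)=\operatorname{Hom}_{\mathbb{Z}G}(\mathbb{Z}[G/H],M)$ in the appropriate sense, and $\mathbb{Z}G$-maps between permutation modules act on it. Composition of these actions matches composition of the maps, by the Mackey double-coset formula.

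\textbf{Step 3: conclude.} After tensoring with $\mathbb{Z}_{(p)}$, the maps $\alpha_*,\beta_*$ satisfy $\beta_*\alpha_*=m_1$ and $\alpha_*\beta_*=m_2$, and both $m_i$ are units at $p$. Hence $\alpha_*$ is an isomorphism $\operatorname{Cl}(k_1)[p^\infty]\to\operatorname{Cl}(k_2)[p^\infty]$.

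The main obstacle is Step 2: checking functoriality, namely that the composite of two double-coset correspondences on class groups equals the correspondence attached to the product in the Hecke algebra. I would handle this by verifying the cohomological Mackey axioms for ideal groups: transitivity of norms and extensions, conjugation compatibility, the Mackey formula (which comes from the decomposition of primes), and $N\circ\mathrm{ext}=[\,:\,]$. I would then quote Yoshida's theorem that cohomological Mackey functors are modules over the permutation Hecke category.
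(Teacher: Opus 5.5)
Your argument is correct in substance, but it is not what the paper does. The paper gives no proof of this statement: it quotes Perlis \cite{Perlis2}, and later invokes the result (as Theorem C) exactly for the case $n=0$ of Theorem~\ref{hguhtuoiwntoir3}.

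That reliance is forced. The paper's own method uses Shapiro's lemma over the single group $G_{\mathbb{Q},S}$, turning $\mathbb{Z}_p[G/H_1]\simeq\mathbb{Z}_p[G/H_2]$ into an isomorphism of the coefficient modules $\operatorname{Coind}_{G_{k_i,S_i}}^{G_{\mathbb{Q},S}}\mathbb{Z}_p(n+1)$. It rests on Theorem A and on the Gysin vanishing $\mathbb{Z}_p(n)^{G_{k(v)}}=0$, and both need $n>0$.

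So your Mackey-functor route is a genuinely different, self-contained proof of exactly the case the paper outsources. What it buys:
\begin{itemize}
\item it is elementary, with no cohomological model of $\operatorname{Cl}$ and no Rost--Voevodsky input;
\item it works for every prime, including $p=2$;
\item it does not depend on which finite Galois $L\supseteq k_1k_2$ is used.
\end{itemize}
The paper's route, where it applies, gets the isomorphism from a single isomorphism of coefficient modules over a common group, so no correspondences between $k_1$ and $k_2$ are built. The price is the bookkeeping of local conditions in Proposition~\ref{setjfiwnmtgongto4etmnoe4}.

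If you want a class-group proof in that spirit, use ideles. $\operatorname{Cl}(L^H)$ is the cokernel of $\operatorname{Hom}_{\mathbb{Z}G}(\mathbb{Z}[G/H],L^\times\times U_L)\to\operatorname{Hom}_{\mathbb{Z}G}(\mathbb{Z}[G/H],\mathbb{A}_L^\times)$, where $U_L$ is the group of unit ideles. After $\otimes\mathbb{Z}_p$ the statement becomes formal.

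If you keep your route, the axiom check in Step 2 is quickest as follows. Embed $I(L^H)$ into $(I(L)\otimes\mathbb{Q})^H$ via extension. For $K\le H$, norms become relative traces, because $N_{L^K/L^H}(\mathfrak{A})\mathcal{O}_L=\prod_{h\in H/K}h(\mathfrak{A}\mathcal{O}_L)$. Then $\operatorname{Cl}$ is the cokernel of a morphism from the fixed-point functor $H\mapsto(L^H)^\times$. Do not read ``$\operatorname{Cl}(L^H)=\operatorname{Hom}_{\mathbb{Z}G}(\mathbb{Z}[G/H],M)$'' literally: extension of ideals can kill classes, so $\operatorname{Cl}$ is not a fixed-point functor, and it is Yoshida's equivalence that does the work.

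Two small repairs are needed.

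First, $\nu_{k_1,k_2}$ is not yours to choose. The paper defines it by $p\nmid\nu_{k_1,k_2}\iff\operatorname{Ind}_{H_1}^G\mathbb{Z}_p\simeq\operatorname{Ind}_{H_2}^G\mathbb{Z}_p$. As written, your ``gcd'' version could a priori have more prime factors, which would give a weaker theorem. You must show that a $\mathbb{Z}_pG$-isomorphism $\phi$ yields integral $\alpha,\beta$ with $p\nmid m_1m_2$:
\begin{itemize}
\item The double-coset basis of the lattice $\operatorname{Hom}_{\mathbb{Z}G}(\mathbb{Z}[G/H_1],\mathbb{Z}[G/H_2])$ also spans the $p$-adic Hom, so you can pick $\alpha$ in the lattice with $\alpha\equiv\phi\pmod p$.
\item Then $d:=\det\alpha\not\equiv0\pmod p$.
\item Set $\beta:=d\,\alpha^{-1}$. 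It is an integral $G$-map with $\beta\alpha=\alpha\beta=d$.
\end{itemize}
A simpler alternative: run Yoshida's theorem over $\mathbb{Z}_p$ for $H\mapsto\operatorname{Cl}(L^H)\otimes\mathbb{Z}_p=\operatorname{Cl}(L^H)[p^\infty]$ and transport $\phi$ directly. Then Gassmann's criterion and your Step 1 are unnecessary; arithmetic equivalence only guarantees that $\nu_{k_1,k_2}$ exists.

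Second, your formula $gH_1\mapsto\sum_j a_j\,g\gamma_jH_2$ is not well defined. The basic operator attached to $H_1\gamma H_2$ sends $gH_1$ to $\sum_{xH_2\subseteq H_1\gamma H_2}gxH_2$.
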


If the number $\nu_{k_1,k_2}$ is defined as $\sharp \operatorname{Cl}(k_1)\sharp \operatorname{Cl}(k_2)$, this claim would be obvious; however, since $\nu_{k_1,k_2}$ in fact has the property of dividing the extension degree $[L:k_1]$, the claim is nontrivial ($L$ is the common Galois closure of $k_1,k_2$ over $\mathbb{Q}$).

It is well known that the algebraic $K$-groups of the ring of integers $\mathcal{O}_F$ recover the ideal class group at the torsion part of degree $n=0$, and the unit group at degree $n=1$:
\[
  K_0(\mathcal{O}_F) \simeq \mathbb{Z} \oplus \operatorname{Cl}(F), \qquad K_1(\mathcal{O}_F) \simeq \mathcal{O}_F^\times.
\]
In general, algebraic $K$-groups of even degree serve as generalizations of the ideal class group, while those of odd degree serve as a generalization of the unit group. Bearing this in mind, for a pair of arithmetically equivalent number fields, one expects an analogue of Perlis's theorem to hold for $K$-groups of even degree, and an isomorphism to hold for $K$-groups of odd degree. This implies that the equivalence relation of almost $K$-equivalence is induced by arithmetical equivalence. Indeed, Komatsu\cite{Komatsu} proved that arithmetically equivalent number fields are almost $K$-equivalent:

\begin{them*}[Komatsu, 1990]
  Let $k_1, k_2$ be arithmetically equivalent number fields, and let $L$ be the common Galois closure of $k_1$ and $k_2$ over $\mathbb{Q}$. For any prime $p$ not dividing $[L:k_1]$ and for any non-negative integer $n$, there is an isomorphism of abelian groups:
  \[
    K_n(\mathcal{O}_{k_1})[p^\infty] \simeq K_n(\mathcal{O}_{k_2})[p^\infty].
  \]
\end{them*}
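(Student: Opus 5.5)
The plan is to reduce the statement to the representation theory of $G=\operatorname{Gal}(L/\mathbb{Q})$, with $H_i=\operatorname{Gal}(L/k_i)$. By Gassmann's criterion, arithmetic equivalence means $\mathbb{Q}[G/H_1]\simeq\mathbb{Q}[G/H_2]$ as $\mathbb{Q}[G]$-modules. The key observation is that for a prime $p\nmid [L:k_1]=|H_1|=|H_2|$ the same holds integrally after $p$-adic completion: $\mathbb{Z}_p[G/H_1]\simeq\mathbb{Z}_p[G/H_2]$. The first step is to establish this. The permutation module $\mathbb{Z}_p[G/H_i]=\operatorname{Ind}_{H_i}^G\mathbb{Z}_p$ is a direct summand of $\mathbb{Z}_p[G]$, cut out by the idempotent $e_{H_i}=|H_i|^{-1}\sum_{h\in H_i}h$, so it is projective over $\mathbb{Z}_p[G]$. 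Finitely generated projective $\mathbb{Z}_p[G]$-modules are determined up to isomorphism by their characters, i.e.\ by their extension to $\mathbb{Q}_p$ (Swan's theorem, or lifting of idempotents through $\mathbb{F}_p[G]$ and the injectivity of the Cartan map on projectives). Hence rational equivalence of the permutation modules upgrades to $\mathbb{Z}_p[G]$-isomorphism.

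Second, I would express the $p$-parts of the $K$-groups functorially in terms of $L$ and these permutation modules. By transfer and restriction, for any $\mathbb{Z}[G]$-module $M$ on which the $p$-part of $H_i$ is trivial (as here, since $p\nmid|H_i|$), one has $M^{H_i}\otimes\mathbb{Z}_p\simeq\operatorname{Hom}_{G}(\mathbb{Z}_p[G/H_i],M\otimes\mathbb{Z}_p)$. Taking $M=K_n(\mathcal{O}_L)$, Galois descent for $K$-theory away from $|H_i|$ gives $K_n(\mathcal{O}_{k_i})[p^\infty]\simeq (K_n(\mathcal{O}_L)[p^\infty])^{H_i}$. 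Indeed, the composition of restriction and norm is multiplication by $|H_i|$, which is invertible on $p$-primary groups, and the composition in the other direction is $\sum_{h\in H_i}h$. Thus
\[
K_n(\mathcal{O}_{k_i})[p^\infty]\simeq \operatorname{Hom}_{\mathbb{Z}_p[G]}\bigl(\mathbb{Z}_p[G/H_i],\,K_n(\mathcal{O}_L)[p^\infty]\bigr).
\]
By the first step, the right-hand side is independent of $i$, which gives the theorem. No Quillen--Lichtenbaum input is needed for this coarse bound.

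The main obstacle is the descent step. I expect it to be handled cleanly by the fact that $K_n$ of rings of integers is covariantly functorial (via transfer for the finite flat map $\mathcal{O}_{k_i}\to\mathcal{O}_L$) and contravariantly functorial (via base change). The projection formula then identifies $\mathrm{res}\circ\mathrm{tr}$ with the norm element on $K_n(\mathcal{O}_L)$. I would verify this identity using Quillen's description of the transfer for the free $\mathcal{O}_{k_i}$-module $\mathcal{O}_L$ after localizing at $p$: since $L/k_i$ may ramify, $\mathcal{O}_L$ need not be free, but it is projective, which suffices, and $\mathcal{O}_L\otimes_{\mathcal{O}_{k_i}}\mathcal{O}_L\to\prod_{h}\mathcal{O}_L$ becomes an isomorphism on $K$-theory with $p$-local coefficients up to the Galois action. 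If that identification proves delicate at ramified primes, I would instead pass to $\mathcal{O}_L[1/N]$ for suitable $N$ and use the localization sequence, whose boundary terms are $K$-groups of finite fields. Those are compatible with the same permutation-module description, since they decompose as induced modules over decomposition groups.
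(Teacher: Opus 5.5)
Your proposal is correct, but it takes a different route from the paper. In fact the paper does not reprove Komatsu's theorem: it cites it and then proves a sharper result (Theorem \ref{hguhtuoiwntoir3}, Corollary \ref{wengrtuan3oin}).

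\textbf{The paper's route.} For odd $p$, it uses Rost--Voevodsky to identify $K_{2n}(\mathcal{O}_{k_i})[p^\infty]$ with a kernel of restriction maps on $H^2_{\rm cts}(G_{k_i,S_i},\mathbb{Z}_p(n+1))$. It enlarges $S_i$ so that $G_{k_i,S_i}$ is open in a common $G_{\mathbb{Q},S}$, and applies Shapiro's lemma to pass to $G_{\mathbb{Q},S}$ with coefficients $\mathbb{Z}_p(n+1)\widehat{\otimes}_{\mathbb{Z}_p}\mathbb{Z}_p[G/H_i]$. The local conditions are matched via the Mackey decomposition. Odd degrees are taken wholesale from Phagan's isomorphism $K_{2n+1}(k_1)\simeq K_{2n+1}(k_2)$.

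\textbf{Your route and the comparison.} You instead descend from $\mathcal{O}_L$ by transfer and base change, then apply Frobenius reciprocity.
\begin{itemize}
\item Your argument is elementary: no Quillen--Lichtenbaum input and no continuous \'{e}tale cohomology.
\item It is uniform in $n$, and it works for $p=2$ even when $k_1,k_2$ have real places. That is exactly the case the paper's method cannot reach (see the remark following Corollary \ref{wengrtuan3oin}).
\item The price is that $p\nmid[L:k_i]$ is used essentially. It is needed not only for projectivity of the permutation modules but for the descent isomorphism $K_n(\mathcal{O}_{k_i})[p^\infty]\simeq K_n(\mathcal{O}_L)[p^\infty]^{H_i}$, which fails in general when $p$ divides the degree.
\item Shapiro's lemma involves no index to invert, so the paper needs only $\mathbb{Z}_p[G/H_1]\simeq\mathbb{Z}_p[G/H_2]$, i.e.\ $p\nmid\nu_{k_1,k_2}$.
\item Your Step 1 is precisely the fact $\nu_{k_1,k_2}\mid[L:k_1]$ quoted in the paper. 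Hence for odd $p$ the paper's result contains yours, and covers more primes whenever some prime divides $[L:k_1]$ but not $\nu_{k_1,k_2}$.
\end{itemize}

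\textbf{One step needs a better justification than the one you lead with.} The identity $\mathrm{res}\circ\mathrm{tr}=\sum_{h\in H_i}h_*$ on $K_n(\mathcal{O}_L)$ does not follow from $\mathcal{O}_L\otimes_{\mathcal{O}_{k_i}}\mathcal{O}_L\to\prod_h\mathcal{O}_L$ ``becoming an isomorphism on $K$-theory with $p$-local coefficients''. That map has a finite cokernel supported at the ramified primes, and nothing you say shows this cokernel is invisible $p$-locally.

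The short fix has three ingredients:
\begin{itemize}
\item for $n\geq 1$, the map $K_n(\mathcal{O}_L)\to K_n(L)$ is injective (Soul\'{e}; cf.\ \cite[Theorem 7]{Weibel3}, as used in the paper);
\item transfer and base change commute with passage to the generic fibre;
\item $L\otimes_{k_i}L\simeq\prod_{h\in H_i}L$.
\end{itemize}
For $n=0$, use $N_{L/k_i}(\mathfrak{A})\mathcal{O}_L=\prod_h h(\mathfrak{A})$ on class groups, or simply Theorem C.

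Your fallback through $\mathcal{O}_L[1/N]$, which is finite \'{e}tale Galois over $\mathcal{O}_{k_i}[1/N]$, can also be made to work. It uses the fact that $(-)^{H_i}$ is exact on $\mathbb{Z}_{(p)}[H_i]$-modules together with the five lemma. However, it additionally requires descent for the finite-field terms of the localization sequence, so the injectivity argument is shorter.

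Finally, $\mathrm{tr}\circ\mathrm{res}$ is multiplication by the class $[\mathcal{O}_L]\in K_0(\mathcal{O}_{k_i})$. This equals $[L:k_i]$ plus a square-zero Steinitz-class term, not literally $[L:k_i]$. It is still invertible on $p$-primary parts, so your conclusion is unaffected.
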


The purpose of this paper is to refine Komatsu's result within the range $p \neq 2$, while elucidating how the condition that number fields are locally integrally equivalent or arithmetically equivalent imposes constraints on the structure of higher algebraic $K$-groups, particularly on their $p$-power torsion parts. Specifically, for an odd prime $p$, we adopt a method that describes algebraic $K$-groups in terms of continuous \'{e}tale cohomology, based on the Quillen-Lichtenbaum conjecture (which is now resolved by Rost and Voevodsky). Furthermore, based on the result of Corollary \ref{nr4githto4wtio4at} (Main Theorem B), we show that there exist infinitely many pairs of non-isomorphic and $K$-equivalent number fields (Corollary \ref{rhgtiwo94gniojwtnjowt}). Through these investigations, we highlight how the arithmetic information carried by higher $K$-groups resonates with the special values of zeta functions and permutation representations of Galois groups.

To this end, \S 2 organizes the theory of the complete group ring $\mathcal{O}[[G]]$ over a profinite group $G$ and coinduced modules, and reviews foundational theorems in Galois cohomology (such as Shapiro's lemma and the Mackey decomposition). In \S 3, we uses U. Jannsen's continuous \'{e}tale cohomology theory to precisely formulate the relationship between the \'{e}tale cohomology of the ring of integers and higher $K$-groups. In particular, we reduce the behavior of cohomology groups in extensions of number fields to a representation-theoretic comparison of global Galois groups and local decomposition groups. In \S 4, we will explain the main results and its corollaries.

    \vspace{0.2cm}

    \textbf{Notation and Conventions.} For a number field $F$, let $\mathcal{O}_F$ denote the ring of integers of $F$. For a ring $R$, let $K_n(R)$ denote Quillen's $n$-th algebraic $K$-group. For an abelian group $G$, let $G[p^\infty]$ denote the $p$-primary part of $G$, and let $G_{\rm tors}$ denote the torsion subgroup of $G$. For a finite group $G$, let $\sharp G$ denote the order of $G$.

    \vspace{0.2cm}
    \textbf{Acknowledgments.} The author would like to express their gratitude to Yuichiro Taguchi, Ryoji Shimizu and Tadashi Ochiai for their valuable feedback and insightful advice on the first draft of this paper.
    
    \section{Preliminaries}

Let $\mathcal{O}$ be the ring of integers of a $p$-adic field, and let $G$ be a profinite group. The ring
\[
\mathcal{O}[[G]] := \varprojlim_U \mathcal{O}[G/U],
\]
where the projective limit runs over the open normal subgroups $U$ of $G$, is called the completed group algebra. Let $\mathscr{C}_G$ denote the category whose objects are topological $\mathcal{O}[[G]]$-modules equipped with a compact Hausdorff topology, and whose morphisms are continuous $\mathcal{O}[[G]]$-module homomorphisms. In this section, we review classical facts concerning coinduced modules and reformulate them in the language of the category $\mathscr{C}_G$.

For a closed subgroup $H$ of a profinite group $G$, define the coinduction functor $\operatorname{Coind}_H^G : \mathscr{C}_H \to \mathscr{C}_G$ by
\begin{align*}
  \operatorname{Coind}_H^G(M)
  := \operatorname{Hom}_{\mathcal{O}[[H]]}^{\rm cts}(\mathcal{O}[[G]],M)
  \simeq \operatorname{Map}_H^{\rm cts}(G,M). \quad (M \in \operatorname{Ob}(\mathscr{C}_H))
\end{align*}
Here, for $f \in \operatorname{Hom}_{\mathcal{O}[[H]]}^{\rm cts}(\mathcal{O}[[G]],M)$, the action of $g \in \mathcal{O}[[G]]$ is defined by $(g\cdot f)(x) := f(xg)
\; (x\in \mathcal{O}[[G]])$, and by equipping this module with the compact-open topology, we regard it as an object of $\mathscr{C}_G$. This functor is right adjoint to the restriction functor $\operatorname{Res}_H^G : \mathscr{C}_G \to \mathscr{C}_H$.

For two objects $M,N \in \mathscr{C}_G$, define their completed tensor product by
\[
  M \widehat{\otimes}_{\mathcal{O}} N
  := \varprojlim_{U,V} M/U \otimes_{\mathcal{O}} N/V,
\]
where the projective limit runs over open submodules $U$ of $M$ and $V$ of $N$. By endowing this module with the diagonal action of $G$, the module $M \widehat{\otimes}_{\mathcal{O}} N$ becomes an object of $\mathscr{C}_G$. (Note that an open submodule $U$ of $M$ is not necessarily $G$-stable, so the action of $G$ cannot in general be defined on $M/U$. Hence we take the projective limit only over $G$-stable open subgroups. Since the collection of such subgroups is cofinal among all open subgroups, the resulting action of $G$ is well defined.)

This tensor product is equipped with a morphism in $\mathscr{C}_G$
\[
  \alpha: M \times N \longrightarrow M \widehat{\otimes}_{\mathcal{O}} N
\]
satisfying the following universal property: for every compact $\mathcal{O}$-module $X$ and every $\mathcal{O}$-bilinear map $F: M \times N \to X$, there exists a unique continuous homomorphism of topological $\mathcal{O}$-modules $G: M \widehat{\otimes}_{\mathcal{O}} N \to X$ such that $F = G \circ \alpha$. For $m\in M$ and $n\in N$, we denote the element $\alpha(m,n)$ of $M \widehat{\otimes}_{\mathcal{O}} N$ by $m\otimes n$.

Let $H$ be a closed subgroup of $G$, and let $N$ be an object of $\mathscr{C}_G$ and $M$ an object of $\mathscr{C}_H$. Define the map $\theta$ by
\[
  \theta:
  \operatorname{Coind}_H^G(M)\widehat{\otimes}_{\mathcal{O}} N
  \longrightarrow
  \operatorname{Coind}_H^G
  (M\widehat{\otimes}_{\mathcal{O}}\operatorname{Res}_H^G N);
  \qquad
  \theta(f\otimes n)(g)
  :=
  f(g)\otimes (g\cdot n).
\]
Then $\theta$ is well defined as a morphism in $\mathscr{C}_G$. 

    \begin{lemm}[Projection formula]
        \label{gtjwi4tjmo3tji4ot}
      Let $N$ be an object of $\mathscr{C}_G$ and $M$ an object of $\mathscr{C}_H$. If $H$ is an open subgroup of $G$, then the morphism
    \[
      \theta: \operatorname{Coind}_H^G(M) \widehat{\otimes}_{\mathcal{O}} N \longrightarrow \operatorname{Coind}_H^G(M \widehat{\otimes}_{\mathcal{O}}\operatorname{Res}_H^G N); \quad \theta(f\otimes n)(g) := f(g) \otimes (g \cdot n)
    \]
      is an isomorphism.
    \end{lemm}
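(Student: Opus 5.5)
Since $H$ is open in $G$, the coset space $H\backslash G$ is finite. I fix right coset representatives $g_1,\dots,g_r$ with $G=\bigsqcup_i Hg_i$. The plan is to show that both sides of $\theta$ decompose as finite direct sums of copies of $M\widehat{\otimes}_{\mathcal{O}} N$ (with $N$ viewed as a compact $\mathcal{O}$-module), and that $\theta$ is, in these coordinates, an invertible ``triangular'' map given by twisting by $g_i$.

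\textbf{Step 1 (finite description of coinduction).} An $H$-equivariant continuous map $f:G\to X$ is determined by its values $f(g_i)$, and any tuple $(x_1,\dots,x_r)\in X^r$ occurs. Continuity is automatic, because $G$ is a finite disjoint union of the open cosets $Hg_i$ and $f(hg_i)=hx_i$. Hence, as topological $\mathcal{O}$-modules,
\[
\operatorname{Coind}_H^G(X)\simeq X^{\oplus r},\qquad f\mapsto (f(g_i))_i,
\]
for every $X\in\mathscr{C}_H$.

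\textbf{Step 2 (tensor commutes with finite sums).} The completed tensor product $-\widehat{\otimes}_{\mathcal{O}} N$ commutes with finite direct sums. This follows from the universal property, or directly from the definition as a projective limit, since the open submodules of $M^{\oplus r}$ of the form $U^{\oplus r}$ are cofinal. Therefore
\[
\operatorname{Coind}_H^G(M)\widehat{\otimes}_{\mathcal{O}} N\simeq (M\widehat{\otimes}_{\mathcal{O}} N)^{\oplus r}
\quad\text{and}\quad
\operatorname{Coind}_H^G(M\widehat{\otimes}_{\mathcal{O}}\operatorname{Res}N)\simeq (M\widehat{\otimes}_{\mathcal{O}} N)^{\oplus r}.
\]
In these coordinates, $\theta$ sends the $i$-th component $m\otimes n$ to $m\otimes g_i n$ in the $i$-th slot. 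So $\theta$ is the diagonal map $\bigoplus_i(\mathrm{id}_M\widehat{\otimes}\, g_i)$.

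\textbf{Step 3 (invertibility).} Each $g_i$ acts on $N$ as a continuous $\mathcal{O}$-linear automorphism with inverse $g_i^{-1}$. By functoriality of $\widehat{\otimes}_{\mathcal{O}}$, the map $\mathrm{id}_M\widehat{\otimes} g_i$ is a topological isomorphism with inverse $\mathrm{id}_M\widehat{\otimes} g_i^{-1}$. Hence $\theta$ is bijective and bicontinuous. It was already stated to be a morphism in $\mathscr{C}_G$, so its inverse is automatically $G$-equivariant. Alternatively, one can write the inverse explicitly as
\[
\psi(F)=\sum_i \delta_i\cdot (\mathrm{id}\otimes g_i^{-1})F(g_i),
\]
where $\delta_i$ denotes extension by $H$-equivariance from the coset $Hg_i$, and check $\theta\psi=\mathrm{id}$ and $\psi\theta=\mathrm{id}$ on generators.

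The main obstacle I expect is Step 2: verifying carefully that the identifications are compatible with the completed tensor topology. In particular, one must check that the coordinate formula for $\theta$ really holds on the completed tensor product and not only on elementary tensors. I would handle this by checking it on the dense subset spanned by elementary tensors and invoking continuity together with the Hausdorff property. Since everything in play is compact Hausdorff, a continuous bijection is automatically a homeomorphism, which settles the topological part.
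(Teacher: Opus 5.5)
Your proposal is correct and follows essentially the same route as the paper: you fix finitely many right coset representatives and identify both coinduced modules with finite direct sums by evaluating at the representatives. You then observe that $\theta$ becomes the coordinatewise twist $(m_t)_t\otimes n\mapsto (m_t\otimes t\cdot n)_t$, inverted by twisting with $t^{-1}$. Your extra care about $\widehat{\otimes}_{\mathcal{O}}$ commuting with finite direct sums, and about the topology (a continuous bijection of compact Hausdorff spaces), spells out what the paper leaves implicit when it calls $\Theta$ ``clearly an isomorphism.''
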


    \begin{proof}
      Since $H$ is open, there exists a finite set $T \subseteq G$ of representatives for the right cosets of $H$ in $G$ (with $|T|=:d$). Each coset $Ht$ is an open and closed subset of $G$, and hence restriction of functions induces an isomorphism
      \[
        \operatorname{Map}_H^{\rm cts}(G,M)
        \simeq
        \bigoplus_{t\in T}\operatorname{Map}_H^{\rm cts}(Ht,M).
      \]
      Under this isomorphism, we obtain
      \[
        \begin{array}{ccccc}
\operatorname{Coind}_H^G(M) = \operatorname{Map}_H^{\rm cts}(G,M)
& \stackrel{\sim}{\longrightarrow} &
\bigoplus_{t \in T} \operatorname{Map}_H^{\rm cts}(Ht,M)
& \stackrel{\sim}{\longrightarrow} &
\bigoplus_{t\in T}M\\
\qquad\qquad\qquad\rotatebox{90}{$\in$}
&
&
\rotatebox{90}{$\in$}
&
&
\rotatebox{90}{$\in$}\\
\qquad\qquad\qquad f
& \longmapsto &
(f|_{Ht})_t
&\longmapsto  &
(f(t))_t.
\end{array}
      \]
      By exactly the same argument, we also obtain an isomorphism
      \[
      \operatorname{Coind}_H^G(M \widehat{\otimes}_{\mathcal{O}}\operatorname{Res}_H^G N)
      \overset{\sim}{\longrightarrow}
      \bigoplus_{t\in T}
      M \widehat{\otimes}_{\mathcal{O}}\operatorname{Res}_H^G N.
      \]

      Now define a morphism in $\mathscr{C}_G$
      \[
        \Theta:
        \left(\bigoplus_{t \in T} M\right)
        \widehat{\otimes}_{\mathcal{O}} N
        \longrightarrow
        \bigoplus_{t\in T}
        \left(M \widehat{\otimes}_{\mathcal{O}}\operatorname{Res}_H^G N\right)
      \]
      by $(m_t)_t \otimes n
        \mapsto
        (m_t\otimes t \cdot n)_t$. This is clearly an isomorphism, whose inverse is given by
      \[
        (m_t\otimes n_t)_t
        \longmapsto
        \sum_{t\in T} m_t \otimes t^{-1}\cdot n_t.
      \]

      Therefore, the diagram
      \[
        \xymatrix{
          \operatorname{Coind}_H^G(M) \widehat{\otimes}_{\mathcal{O}} N \ar[r]^-{\theta} \ar[d]^-{\wr} &
          \operatorname{Coind}_H^G(M \widehat{\otimes}_{\mathcal{O}}\operatorname{Res}_H^G N) \ar[d]^-{\wr} \\
          (\bigoplus_{t \in T} M) \widehat{\otimes}_{\mathcal{O}} N \ar[r]^-{\Theta} &
          \bigoplus_{t\in T} (M \widehat{\otimes}_{\mathcal{O}}\operatorname{Res}_H^G N)
        }
      \]
      is commutative, and hence $\theta$ is an isomorphism.
    \end{proof}

    \begin{lemm}[Mackey decomposition]
        Let $H$ and $K$ be closed subgroups of a profinite group $G$, and let $M \in \mathscr{C}_H$ be an $H$-module. If either $H$ or $K$ is an open subgroup, then there exists an isomorphism in $\mathscr{C}_K$
        \[
          \operatorname{Res}_K^G\operatorname{Coind}_H^G M
          \simeq
          \bigoplus_{g \in H \backslash G / K}
          \operatorname{Coind}_{K\cap g^{-1}Hg}^K \left({}^{g^{-1}}M\right).
        \]
        Here, ${}^{g^{-1}}M$ denotes the module on which an element $x = g^{-1}hg \in K\cap g^{-1}Hg$ acts on $m\in M$ by $ x\cdot m := h\cdot m$.
    \end{lemm}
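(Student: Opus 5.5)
The plan is to decompose $G$ into $(H,K)$-double cosets and identify the restriction of $\operatorname{Coind}_H^G M=\operatorname{Map}_H^{\rm cts}(G,M)$ to each double coset with a coinduced $K$-module. First, since $H$ or $K$ is open, every double coset $HgK$ is open in $G$: it is a union of left cosets of the open $K$, or of right cosets of the open $H$. The double cosets are also pairwise disjoint and cover the compact group $G$. Hence $H\backslash G/K$ is finite and each $HgK$ is clopen. Restricting functions therefore gives a topological isomorphism
\[
\operatorname{Map}_H^{\rm cts}(G,M)\simeq\bigoplus_{g\in H\backslash G/K}\operatorname{Map}_H^{\rm cts}(HgK,M).
\]
This is the same argument as in the proof of Lemma \ref{gtjwi4tjmo3tji4ot}. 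Each summand is stable under the right-translation action of $K$, $(k\cdot f)(x)=f(xk)$, because $HgK\cdot k=HgK$. So the decomposition holds in $\mathscr{C}_K$, and it is compatible with the $\mathcal{O}$-module structure and the compact-open topology.

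Next, for a fixed representative $g$, put $K_g:=K\cap g^{-1}Hg$ and define
\[
\Phi_g:\operatorname{Map}_H^{\rm cts}(HgK,M)\to\operatorname{Map}_{K_g}^{\rm cts}(K,{}^{g^{-1}}M),\qquad \Phi_g(f)(k):=f(gk).
\]
The key steps for $\Phi_g$ are as follows.

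\begin{enumerate}[(i)]
\item Equivariance for $K_g$: if $x=g^{-1}hg\in K_g$, then $\Phi_g(f)(xk)=f(hgk)=h\cdot f(gk)=x\cdot\Phi_g(f)(k)$, which is exactly the twisted action on ${}^{g^{-1}}M$.
\item Compatibility with the $K$-action: $\Phi_g(k'\cdot f)(k)=f(gkk')=(k'\cdot\Phi_g(f))(k)$.
\item Inverse map: define $\Psi_g(\phi)(hgk):=h\cdot\phi(k)$. It is well defined because if $hgk=h'gk'$, then $g^{-1}h'^{-1}hg=k'k^{-1}\in K_g$. Applying $K_g$-equivariance of $\phi$ gives $h\cdot\phi(k)=h'\cdot\phi(k')$.
\end{enumerate}

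The main obstacle is continuity: showing that $\Psi_g(\phi)$ is continuous and that $\Phi_g,\Psi_g$ are homeomorphisms for the compact-open topologies. Continuity of $\Phi_g$ is immediate, since it is composition with the continuous map $k\mapsto gk$. For $\Psi_g$, I would consider the continuous surjection
\[
\mu:H\times K\to HgK,\qquad (h,k)\mapsto hgk,
\]
and the continuous map $(h,k)\mapsto h\cdot\phi(k)$ on $H\times K$. This map is constant on the fibres of $\mu$. Since $H\times K$ is compact and $HgK$ is Hausdorff, $\mu$ is a closed map, hence a quotient map. Therefore $\Psi_g(\phi)$ is continuous.

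For the topologies, I would again use compactness. The map $\Psi_g$ is a continuous bijection between compact Hausdorff spaces once its continuity in $\phi$ is checked, and then it is automatically a homeomorphism. That continuity follows from the same quotient argument applied uniformly, using that $M$ is a compact $\mathcal{O}$-module, so the compact-open topology is the topology of uniform convergence. Alternatively, I would simply note that $\Phi_g$ is a continuous bijection from a compact space onto a Hausdorff space.

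Summing $\Phi_g$ over the finitely many double cosets then gives the asserted isomorphism in $\mathscr{C}_K$. Finally, I would check that changing the representative $g$ to $hgk$ changes the summand only by an isomorphic twist, namely conjugation by $k$ in $K$. So the statement does not depend on the choice of representatives.
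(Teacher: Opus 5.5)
Your proposal is correct and follows the same route as the paper. You split $\operatorname{Map}_H^{\rm cts}(G,M)$ by supports on the double cosets $HgK$, identify each piece with $\operatorname{Coind}_{K\cap g^{-1}Hg}^K({}^{g^{-1}}M)$ via $f\mapsto(k\mapsto f(gk))$, and invert by $\phi\mapsto(hgk\mapsto h\cdot\phi(k))$. Your extra checks supply details the paper leaves implicit: finiteness and clopenness of the double cosets, well-definedness of the inverse, and continuity of $\Psi_g(\phi)$ via the quotient map $H\times K\to HgK$.

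One small caution on topology: drop the appeal to compactness. When only $K$ is open the function spaces need not be compact; for example, with $H=1$ and $K=G=\mathbb{Z}_p$ you get $\operatorname{Map}^{\rm cts}(\mathbb{Z}_p,\mathbb{Z}_p)$ with the uniform topology, which is not compact. Compactness is also unnecessary: $\Phi_g$ is continuous, and $\Psi_g$ is continuous for uniform convergence once you use an $H$-stable open neighbourhood basis of $0$ in $M$. Since they are inverse to each other, they are therefore mutually inverse homeomorphisms.
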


    \begin{proof}
        Fix a complete set of representatives $g \in H \backslash G / K$ for the double cosets of $G$. Then $\operatorname{Coind}_H^G M
        \simeq
        \operatorname{Map}_H^{\rm cts}(G,M)$ admits a direct sum decomposition as a $K$-module:
        \[
          \operatorname{Coind}_H^G M
          \simeq
          \bigoplus_{g \in H \backslash G / K} V_g,
          \qquad
          V_g
          :=
          \left\{
          f \in \operatorname{Map}_H^{\rm cts}(G,M)
          \ ; \
          \operatorname{supp}(f) \subseteq HgK
          \right\}.
        \]

        For each representative $g$, define a map
        \[
          \psi_g :
          V_g
          \to
          \operatorname{Map}_{K\cap g^{-1}Hg}^{\rm cts}(K,{}^{g^{-1}}M)
        \]
        by $\psi_g(f)(k) := f(gk)$. For $x = g^{-1}hg \in K\cap g^{-1}Hg$, we compute
        \begin{align*}
            \psi_g(f)(xk)
            =
            f(gxk)
            =
            f(gg^{-1}hgk)
            =
            f(hgk)
            =
            h\cdot f(gk)
            =
            x \cdot \psi_g(f)(k),
        \end{align*}
        and hence this map is well defined.

        Conversely, given $\phi \in \operatorname{Map}_{K\cap g^{-1}Hg}^{\rm cts}(K,{}^{g^{-1}}M)$, we obtain an inverse map by defining $f(hgk) := h\cdot \phi(k)$. Therefore $\psi_g$ is an isomorphism, and the lemma follows.
    \end{proof}

    By the proof, if we define the morphism
        \[
          \mathrm{Mackey}:
          \operatorname{Res}_K^G\operatorname{Coind}_H^G M
          \to
          \bigoplus_{g \in H \backslash G / K}
          \operatorname{Coind}_{K\cap g^{-1}Hg}^K
          \left({}^{g^{-1}}M\right)
        \]
        as the composite
        \[
          f \in \operatorname{Map}_H^{\rm cts}(G,M)
          \mapsto
          (f|_{HgK})_g
          \mapsto
          (\psi_g(f|_{HgK}))_g,
        \]
        then this becomes an isomorphism. This isomorphism depends only on the choice of the complete set of representatives.

        \begin{lemm}
            \label{rejtgiownmgtiom}
            Let $H$ be an open subgroup of a profinite group $G$, and let $N$ be a closed normal subgroup of $G$ such that $N \subseteq H$. Given a module $A \in \mathscr{C}_H$ on which $H$ acts trivially, there is an isomorphism of $G$-modules
            \[
              \operatorname{Coind}_H^G(A)
              \simeq
              \operatorname{Inf}^G_{G/N}\left(\operatorname{Coind}_{H/N}^{G/N}(A)\right).
            \]
            Here, for a $G/N$-module $M$, $\operatorname{Inf}_{G/N}^G M$ denotes $M$ regarded as a $G$-module via the quotient map $G \twoheadrightarrow G/N$.
        \end{lemm}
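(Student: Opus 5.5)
We will write down an explicit map using the model $\operatorname{Coind}_H^G(A) \simeq \operatorname{Map}_H^{\rm cts}(G,A)$ and check directly that it is an isomorphism of $G$-modules. Since $H$ acts trivially on $A$, an element $f$ of $\operatorname{Map}_H^{\rm cts}(G,A)$ satisfies $f(hg) = h\cdot f(g) = f(g)$ for $h\in H$. Thus $f$ is exactly a continuous function on the coset space $H\backslash G$. Likewise, $\operatorname{Coind}_{H/N}^{G/N}(A)$ is identified with the continuous functions on $(H/N)\backslash (G/N)$, because $H/N$ also acts trivially on $A$.

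The first step is to identify the two coset spaces. Because $N \subseteq H$, the natural map $H\backslash G \to (H/N)\backslash(G/N)$, $Hg \mapsto (H/N)(gN)$, is a bijection. Both spaces are finite and discrete since $H$ is open, so continuity is automatic. Composing with the quotient map $\pi: G\to G/N$ then gives a map
\[
  \Phi: \operatorname{Map}_{H/N}^{\rm cts}(G/N,A) \longrightarrow \operatorname{Map}_H^{\rm cts}(G,A), \qquad \Phi(\bar f) := \bar f\circ \pi .
\]
The map is well defined: $\bar f(\pi(hg)) = \bar f(\pi(h)\pi(g)) = \bar f(\pi(g))$. For the inverse, a function $f$ on $G$ that is left $H$-invariant is in particular left $N$-invariant. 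Since $N$ is normal, $Ng = gN$, so $f$ is constant on the fibres of $\pi$ and descends uniquely to $\bar f$ on $G/N$. This $\bar f$ is left $H/N$-invariant and continuous, because $G/N$ carries the quotient topology. Hence $\Phi$ is bijective, and it is clearly $\mathcal{O}$-linear.

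Next we check $G$-equivariance, where $G$ acts on the source through $\pi$. Using the action $(g\cdot f)(x) = f(xg)$, we get
\[
  \Phi(\pi(g)\cdot \bar f)(x) = \bar f(\pi(x)\pi(g)) = \bar f(\pi(xg)) = (g\cdot \Phi(\bar f))(x).
\]
Finally, both sides are finite direct sums of copies of $A$ indexed by $H\backslash G$, as in the proof of Lemma \ref{gtjwi4tjmo3tji4ot}, and under these identifications $\Phi$ is the identity. So $\Phi$ is a homeomorphism and hence an isomorphism in $\mathscr{C}_G$.

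The step needing the most care is the descent of $f$ to $G/N$. This is where normality of $N$ is essential: left $N$-invariance must become invariance under the fibres $gN$ of $\pi$, and that holds only because $Ng=gN$. Once this is in place, the rest is formal bookkeeping.
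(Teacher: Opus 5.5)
Your proposal is correct and takes essentially the same route as the paper. Both proofs use the triviality of the $H$-action to view $\operatorname{Coind}_H^G(A)$ as functions on $H\backslash G$, transport them along the bijection $H\backslash G \simeq (H/N)\backslash(G/N)$ (with normality of $N$ doing the work), and check that right translation is preserved; your explicit descent along $\pi$ and your treatment of the topology via the finite direct sum decomposition fill in details the paper leaves implicit.
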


        \begin{proof}
            The coinduced module can be written as
            \[
              \operatorname{Coind}_H^G(A)
              \simeq
              \{f: G \to A \ \text{(cont.)} \ ; \ \forall g \in G, \forall h \in H, \ f(hg) = h\cdot f(g)\}.
            \]
            Since $H$ acts trivially on $A$, we have $f(hg)=f(g)$. Hence $\operatorname{Coind}_H^G(A) \simeq \operatorname{Map}(H\backslash G,A)$. The action of $G$ on an element $\varphi$ of $\operatorname{Map}(H\backslash G,A)$ is given by right translation: $(g_0\cdot \varphi)(Hg) = \varphi(Hgg_0)$. Since $N$ is a normal subgroup of $G$, for any $n\in N$ and $g\in G$ we have $gng^{-1}\in N$. Thus $gn = (gng^{-1})g$, and hence $Hgn = Hg$.
            Therefore $n\cdot \varphi=\varphi$, so $N$ acts trivially on $\operatorname{Map}(H\backslash G,A)$.

            Using the bijection of left cosets
            \[
              H \backslash G
              \simeq
              (H/N)\backslash (G/N);
              \quad
              Hg \mapsto (H/N)(gN),
            \]
            we obtain
            \[
              \operatorname{Map}(H\backslash G,A)
              \simeq
              \operatorname{Map}((H/N)\backslash (G/N),A).
            \]
            Moreover, this isomorphism preserves the action by right translation. Hence the lemma follows.
        \end{proof}

Let $\mathscr{D}_G$ denote the category whose objects are topological $\mathcal{O}[[G]]$-modules equipped with a discrete Hausdorff topology, and whose morphisms are continuous $\mathcal{O}[[G]]$-module homomorphisms. This category is an abelian category with enough injectives, and hence one obtains the right derived functors of the left exact functor
\[
  \mathscr{D}_G^{\mathbb{N}}
  \to
  \mathbf{Ab};
  \quad
  (M_n,d_n)
  \mapsto
  \varprojlim H^0(G,M_n).
\]
Here, $\mathscr{D}_G^{\mathbb{N}}$ denotes the category of inverse systems in $\mathscr{D}_G$ indexed by the set $\mathbb{N}$ of natural numbers with the natural order. The fact that $\mathscr{D}_G$ has enough injectives is equivalent to the fact that $\mathscr{D}_G^{\mathbb{N}}$ has enough injectives\cite[(1.1) Proposition]{Jannsen}. We denote the $i$-th right derived functor by $H^i(G,(M_n,d_n))$. For an object $(T_n)_n$ of $\mathscr{D}_G^{\mathbb{N}}$ satisfying the Mittag--Leffler condition, there is a functorial isomorphism\cite[(2.2) Theorem]{Jannsen}:
\[
  H^i_{\rm cts}\left(G,\varprojlim_n T_n\right)
  \simeq
  H^i(G,(T_n,d_n))
\]
where, for a profinite group $G$ and a continuous $G$-module $A$, the notation $H^i_{\rm cts}(G,A)$ denotes the continuous group cohomology.

Suppose that a compact $G$-module $A$ is expressed as the projective limit of a projective system of finite discrete $G$-modules $\{A_i\}_{i=1}^\infty$. Then one naturally obtains a short exact sequence
        \[
          0
          \longrightarrow
          \varprojlim\nolimits_n^1 H^{i-1}(G,A_n)
          \longrightarrow
          H^i_{\rm cts}(G,A)
          \longrightarrow
          \varprojlim\nolimits_n H^i(G,A_n)
          \longrightarrow
          0
        \]
        \cite[Theorem~2.7.5]{NSW}. In general, the projective limit of cohomology groups $\varprojlim\nolimits_n H^i(G,A_n)$ is not necessarily isomorphic to the continuous cohomology $H^i_{\rm cts}(G,A)$, but when the Mittag--Leffler condition is satisfied, the term $\varprojlim\nolimits_n^1$ vanishes, and these cohomology groups become naturally isomorphic.

        \begin{lemm}
            If $H^{i-1}(G,A_n)$ is finite for every $n$, then
            \[
              H^i_{\rm cts}(G,A)
              \longrightarrow
              \varprojlim\nolimits_n H^i(G,A_n)
            \]
            is an isomorphism\cite[Corollary~2.7.6]{NSW}.
        \end{lemm}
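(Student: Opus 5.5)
The plan is to start from the short exact sequence quoted just before the statement,
\[
  0 \longrightarrow \varprojlim\nolimits_n^1 H^{i-1}(G,A_n) \longrightarrow H^i_{\rm cts}(G,A) \longrightarrow \varprojlim\nolimits_n H^i(G,A_n) \longrightarrow 0 .
\]
With this sequence in hand, the statement amounts to the vanishing of $\varprojlim^1_n H^{i-1}(G,A_n)$. Since the map on the right is already surjective by exactness, it suffices to prove injectivity, that is, to kill the $\varprojlim^1$ term.

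To do this I will use the standard fact that an inverse system of abelian groups satisfying the Mittag--Leffler condition has vanishing $\varprojlim^1$. Put $B_n := H^{i-1}(G,A_n)$, with transition maps $B_{n+1}\to B_n$ induced functorially by the maps $A_{n+1}\to A_n$. For fixed $n$, the images $\operatorname{Im}(B_m\to B_n)$ for $m\ge n$ form a decreasing chain of subgroups of $B_n$. By hypothesis $B_n$ is finite, so this chain must stabilize. This is exactly the Mittag--Leffler condition for $(B_n)_n$, and hence $\varprojlim^1_n B_n=0$.

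It then remains to recall why Mittag--Leffler implies the vanishing, or simply to cite it. One can realize $\varprojlim^1$ as the cokernel of
\[
  \prod_n B_n\to\prod_n B_n,\qquad (b_n)\mapsto (b_n - \varphi_{n+1}(b_{n+1})),
\]
where $\varphi_{n+1}:B_{n+1}\to B_n$ are the transition maps. The vanishing then follows from a routine successive-approximation argument using the stable images. For finite groups there is an even quicker route: the system is a system of compact (discrete, finite) groups, and $\varprojlim^1$ vanishes for inverse systems of compact Hausdorff groups. I would cite this, for instance from the treatment accompanying \cite[Theorem~2.7.5]{NSW}.

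The only point requiring any care is that the exact sequence is functorial in the stated direction, so that the map in the lemma is the one appearing in it. This is built into the cited theorem. I therefore expect no real obstacle: the whole content is that finiteness gives Mittag--Leffler, and Mittag--Leffler gives $\varprojlim^1=0$.
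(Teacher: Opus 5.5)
Your proposal is correct and follows the same route as the paper. The paper gives no separate proof: it cites \cite[Corollary~2.7.6]{NSW}, and in the preceding paragraph it recalls the exact sequence of \cite[Theorem~2.7.5]{NSW} and notes that the $\varprojlim^1$ term vanishes under the Mittag--Leffler condition. Your observation that finiteness of $H^{i-1}(G,A_n)$ forces the chain of images to stabilize is exactly the step that supplies that condition.
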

Let $G$ be a profinite group, $H$ a closed subgroup of $G$, and $M$ an $H$-module. Consider the two homomorphisms
        \[
          \varphi: H \hookrightarrow G,
          \qquad
          \phi:\operatorname{Coind}_H^G(M) \to M;
          \quad
          f \mapsto f(1).
        \]
        These are compatible in the following sense: for any $f \in \operatorname{Coind}_H^G(M), \; h \in H$, one has
        \[
          \phi(\varphi(h)\cdot f)
          =
          \phi(h\cdot f)
          =
          (h\cdot f)(1)
          =
          f(h)
          =
          h\cdot f(1)
          =
          h\cdot \phi(f).
        \]
        Hence we obtain a homomorphism on cohomology groups
        \[
          \operatorname{sh}_1:
          H_{\rm cts}^n(G,\operatorname{Coind}_H^G(M))
          \to
          H_{\rm cts}^n(H,M).
        \]

        \begin{lemm}[Shapiro]
          \label{senti34wtioemto}
            The map $\operatorname{sh}_1$ is an isomorphism\cite[Proposition~(1.6.4)]{NSW}.
        \end{lemm}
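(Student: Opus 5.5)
The plan is to realize $\operatorname{sh}_1$ as the composite of restriction and the map induced by $\phi$, and then to show it is a natural isomorphism of $\delta$-functors on the category of (discrete, or compact) $H$-modules. The key structural input is that $\operatorname{Coind}_H^G$ is exact and, being right adjoint to the exact functor $\operatorname{Res}_H^G$, carries injectives to injectives. Concretely, $\operatorname{sh}_1$ is the composite
\[
  H^n_{\rm cts}(G,\operatorname{Coind}_H^G M)\xrightarrow{\operatorname{res}} H^n_{\rm cts}(H,\operatorname{Coind}_H^G M)\xrightarrow{\phi_*} H^n_{\rm cts}(H,M).
\]
Both steps commute with connecting homomorphisms, since restriction and $\phi$ are natural in short exact sequences of $H$-modules. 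Hence $\{\operatorname{sh}_1\}_n$ is a morphism of $\delta$-functors $M\mapsto H^n(G,\operatorname{Coind}_H^G M)$ and $M\mapsto H^n(H,M)$.

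First I treat degree $0$. We have $H^0(G,\operatorname{Coind}_H^G M)=\operatorname{Map}^{\rm cts}_H(G,M)^G$. A $G$-invariant $f$ satisfies $f(g)=f(1\cdot g)=(g\cdot f)(1)=f(1)$, so $f$ is constant. The $H$-equivariance $f(h)=hf(1)$ then forces $f(1)\in M^H$. Conversely, each $m\in M^H$ gives a constant invariant function. Thus $\phi$ induces $H^0(G,\operatorname{Coind}_H^GM)\simeq M^H$.

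Next, exactness. For discrete $M$, a continuous $H$-equivariant map $G\to M$ is locally constant and factors through $H\backslash G$. Choosing a continuous section $H\backslash G\to G$ (which exists for profinite groups) identifies $\operatorname{Coind}_H^G M$ with $\operatorname{Map}^{\rm cts}(H\backslash G,M)$ as abelian groups, and this is visibly exact in $M$. The adjunction $\operatorname{Hom}_G(N,\operatorname{Coind}_H^G I)=\operatorname{Hom}_H(\operatorname{Res} N,I)$ then shows that $\operatorname{Coind}_H^G$ preserves injectives, because restriction is exact. It follows that $M\mapsto H^n(G,\operatorname{Coind}_H^GM)$ is an effaceable universal $\delta$-functor, erasable by embedding $M$ into an injective. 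Since it agrees with $H^n(H,-)$ in degree $0$ via $\phi$, and the comparison maps are compatible with $\delta$, universality gives the isomorphism $\operatorname{sh}_1$ for all $n$ in the discrete case.

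Finally, I pass to compact modules $M=\varprojlim M_k$, with $M_k$ finite discrete, through Jannsen's continuous cohomology. Coinduction commutes with inverse limits, and the transition maps stay surjective, so the Mittag-Leffler condition persists. The same universal $\delta$-functor argument then runs in $\mathscr{D}_H^{\mathbb{N}}$, using that $\operatorname{Coind}$ applied levelwise preserves injectives of inverse systems; equivalently, I compare the $\varprojlim^1$/$\varprojlim$ exact sequences by the five lemma. This last step is the main obstacle: checking that coinduction is exact and preserves injectives at the level of inverse systems, and that the resulting map agrees with the explicit $\operatorname{sh}_1$. If $H$ is open, everything simplifies, because $\operatorname{Coind}$ is a finite product as in the proof of Lemma \ref{gtjwi4tjmo3tji4ot}.
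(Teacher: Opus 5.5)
The paper gives no argument of its own here. It cites NSW, Proposition (1.6.4), which is Shapiro's lemma for \emph{discrete} modules, although the paper later applies the lemma to compact coefficients such as $\mathbb{Z}_p(n+1)$ in continuous cohomology. Your proposal is correct.

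Your discrete-case argument is the standard proof behind that citation: a morphism of $\delta$-functors, the degree-$0$ computation, exactness of $\operatorname{Coind}_H^G$ via a continuous section of $G\to H\backslash G$, preservation of injectives by the right adjoint of the exact functor $\operatorname{Res}_H^G$, and universality. What you add is the passage to compact $M=\varprojlim_k M_k$, which the paper leaves implicit.

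For that step, your second option (comparing the $\varprojlim^1$ sequences) is the right one, and it already removes what you call the main obstacle. On continuous cochains, $\operatorname{sh}_1$ is given by $c\mapsto\phi\circ(c|_{H\times\cdots\times H})$, and $C^\bullet(G,\varprojlim_k A_k)=\varprojlim_k C^\bullet(G,A_k)$. So $\operatorname{sh}_1$ maps the $\varprojlim^1$ exact sequence for $(\operatorname{Coind}_H^G M_k)_k$ to the one for $(M_k)_k$. The outer maps are isomorphisms because the discrete Shapiro isomorphisms are natural in $M_k$, and the five lemma finishes. You need no injectives in $\mathscr{D}_H^{\mathbb{N}}$ and no separate check that the resulting map is the explicit $\operatorname{sh}_1$.

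There are two small corrections.
\begin{enumerate}[(1)]
\item A continuous $H$-equivariant map $G\to M$ does not factor through $H\backslash G$ unless $H$ acts trivially on $M$. Only the section-based identification $f\mapsto f\circ s$ is valid, and that is what you actually use.
\item If $H$ is not open, the modules $\operatorname{Coind}_H^G M_k$ are infinite. You then need the $\varprojlim^1$ sequence for discrete modules with surjective transition maps, not the finite-module version recalled in Section 2. That version holds because continuous cochains into discrete modules lift along surjections. In every application in the paper $H$ is open, so this point does not arise there.
\end{enumerate}
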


        Now fix an element $g\in G$, and consider the following two homomorphisms:
        \[
          \varphi: g^{-1}Hg \hookrightarrow G,
          \qquad
          \phi:\operatorname{Coind}_H^G(M) \to {}^{g^{-1}}M;
          \quad
          f \mapsto f(g).
        \]
        Here, ${}^{g^{-1}}M$ denotes the $g^{-1}Hg$-module whose action is defined by $(g^{-1}hg)\cdot m := h\cdot m$
        for $m\in M$.

        As above, these maps are compatible: for any $h\in H$ and
        $
        f \in \operatorname{Coind}_H^G(M),
        $
        one has
        \[
          \phi(\varphi(g^{-1}hg)\cdot f)
          =
          \phi(g^{-1}hg\cdot f)
          =
          f(gg^{-1}hg)
          =
          f(hg)
          =
          h\cdot f(g)
          =
          (g^{-1}hg)\cdot \phi(f),
        \]
        where the last equality follows from the definition of the action on ${}^{g^{-1}}M$.
        Therefore we obtain a homomorphism on cohomology groups
        \[
          \operatorname{sh}_g:
          H_{\rm cts}^n(G,\operatorname{Coind}_H^G(M))
          \to
          H_{\rm cts}^n(g^{-1}Hg,{}^{g^{-1}}M).
        \]

        \begin{lemm}
            \label{rmgtiowmto3emtor}
            The diagram
        \[
          \xymatrix{
            &
            H_{\rm cts}^i(G,\operatorname{Coind}_H^G(M))
            \ar[ld]_-{\mathrm{sh}_g}
            \ar[rd]^-{\mathrm{sh}_1}
            & \\
            H_{\rm cts}^i(g^{-1}Hg,{}^{g^{-1}}M)
            &
            &
            H_{\rm cts}^i(H,M)
            \ar[ll]_-{\mathrm{conj}_g}
          }
        \]
        is commutative. Here, $\operatorname{conj}_g$ denotes the isomorphism on cohomology induced by $g^{-1}Hg \to H; \quad x \mapsto gxg^{-1}$,
        together with the identity map $M \to {}^{g^{-1}}M;
          \quad
          x \mapsto x$. Therefore, $\operatorname{conj}_g$ is also an isomorphism.
        \end{lemm}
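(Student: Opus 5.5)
The plan is to work at the level of continuous cochains and check that $\operatorname{sh}_g$ and $\operatorname{conj}_g\circ\operatorname{sh}_1$ are induced by the same map of cochain complexes, or by two maps that differ by an inner automorphism. Both are maps of the compatible-pair type. Shapiro's map $\operatorname{sh}_1$ is induced by the pair $(H\hookrightarrow G,\ f\mapsto f(1))$. The map $\operatorname{conj}_g$ is induced by the pair $(c_g:g^{-1}Hg\to H,\ x\mapsto gxg^{-1};\ \mathrm{id}:M\to{}^{g^{-1}}M)$. Compatibility holds because $(g^{-1}hg)\cdot m=h\cdot m$ by definition.

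Composing compatible pairs gives a compatible pair, so $\operatorname{conj}_g\circ\operatorname{sh}_1$ is induced by
\[
  \bigl(g^{-1}Hg\to G,\ x\mapsto gxg^{-1}\bigr),\qquad \operatorname{Coind}_H^G(M)\to {}^{g^{-1}}M,\ f\mapsto f(1).
\]
The map $\operatorname{sh}_g$ comes instead from the pair $(g^{-1}Hg\hookrightarrow G,\ f\mapsto f(g))$. Since $f(g)=(g\cdot f)(1)$, the second pair is obtained from the first by composing with the pair $(\mathrm{inn}_g:G\to G,\ x\mapsto g^{-1}xg;\ f\mapsto g\cdot f)$ on $\operatorname{Coind}_H^G(M)$. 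One checks directly that $(x\mapsto gxg^{-1},\ f\mapsto f(1))$ precomposed with this pair gives $x\mapsto g(g^{-1}xg)g^{-1}\cdot$, which is the inclusion, together with $f\mapsto (g\cdot f)(1)=f(g)$. So
\[
  \operatorname{sh}_g=\operatorname{conj}_g\circ\operatorname{sh}_1\circ c^*,
\]
where $c^*$ is the endomorphism of $H^i_{\rm cts}(G,\operatorname{Coind}_H^G M)$ induced by conjugation by an element of $G$ itself.

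The key step, and the only non-formal one, is the standard fact that conjugation by an element of $G$ acts as the identity on $H^i_{\rm cts}(G,A)$ (NSW, Proposition 1.6.3). I would either cite it or argue by dimension shifting. In degree $0$ it is the identity because $g\cdot a=a$ for invariants. For higher degrees, embed $A$ into the coinduced module $\operatorname{Map}^{\rm cts}(G,A)$, which is acyclic for continuous cohomology, and use naturality of $c^*$ with respect to connecting homomorphisms. For compact $A$ one should make sure the embedding stays in the category $\mathscr{C}_G$, so that the effaceable $\delta$-functor argument applies to $H^i_{\rm cts}$. Alternatively, I would pass to $\varprojlim$ of discrete quotients using Jannsen's comparison, where the discrete case is classical. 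This topological bookkeeping is where I expect the main care is needed.

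Finally, $\operatorname{sh}_1$ and $\operatorname{sh}_g$ are both isomorphisms by Lemma \ref{senti34wtioemto}, the latter applied to the subgroup $g^{-1}Hg$ after identifying $\operatorname{Coind}_H^G M\simeq\operatorname{Coind}_{g^{-1}Hg}^G({}^{g^{-1}}M)$ via $f\mapsto g\cdot f$. Commutativity, $\operatorname{sh}_g=\operatorname{conj}_g\circ\operatorname{sh}_1$, then gives that $\operatorname{conj}_g$ is an isomorphism. This is also clear directly, since $c_g$ and $\mathrm{id}$ are isomorphisms with the inverse pair inducing the inverse map.
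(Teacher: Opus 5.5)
Your argument is correct and is essentially the paper's: both show that $\operatorname{sh}_g$ and $\operatorname{conj}_g\circ\operatorname{sh}_1$ differ only by the endomorphism of $H^i_{\rm cts}(G,\operatorname{Coind}_H^G M)$ induced by an inner-conjugation compatible pair, which acts as the identity. The paper uses the pair $(x\mapsto gxg^{-1},\,a\mapsto g^{-1}a)$ and you use its inverse; your extra care about the continuous version of this fact is a plus, and the identification in your closing aside should be $f\mapsto f(g\,\cdot\,)$ rather than $f\mapsto g\cdot f$, though your direct argument that $\operatorname{conj}_g$ is invertible already makes that aside unnecessary.
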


          \begin{proof}
              If $A$ is a $G$-module, then the homomorphism $H^i(G,A)\to H^i(G,A)$ induced by
        \[
        G\to G;
        \:
        x\mapsto gxg^{-1},
        \quad
        \text{and}
        \quad
        A\to A;
        \;
        x\mapsto g^{-1}x,
        \]
        coincides with the identity map. Applying this to $A=\operatorname{Coind}_H^G(M)$, and composing before $\operatorname{sh}_g$, we see that $\operatorname{sh}_g$ is induced by the pair
        \[
          g^{-1}Hg \to G;
          \;
          x\mapsto gxg^{-1},
          \quad 
          \text{and}
          \quad 
          \operatorname{Coind}_H^G(M)
          \to
          {}^{g^{-1}}M;
          \;
          f \mapsto g^{-1}f \mapsto f(1).
        \]
        This clearly coincides with $\operatorname{conj}_g\circ \operatorname{sh}_1$.
          \end{proof}

        \section{The structure of algebraic $K$-groups}

In this section, we study the structure of the algebraic $K$-groups $K_{2n}(\mathcal{O}_F)$ associated with the ring of integers $\mathcal{O}_F$ of a number field $F$ in order to prove Theorem \ref{hguhtuoiwntoir3}, which asserts that if $k_1$ and $k_2$ are arithmetically equivalent number fields, then for every odd prime number $p$ relatively prime to the integer $\nu_{k_1,k_2}$ introduced by Perlis, there exists an isomorphism $K_{2n}(\mathcal{O}_{k_1})[p^\infty] \simeq K_{2n}(\mathcal{O}_{k_2})[p^\infty]$. The other corollaries mentioned in the introduction are obtained from this theorem through several additional arguments.

The basic idea of the proof of Theorem \ref{hguhtuoiwntoir3} is as follows. First, it is well known that there exists an isomorphism
\[
 K_{2n}(\mathcal{O}_F)[p^\infty]
\simeq
H^2_{\rm cts}(G_{F,S},\mathbb{Z}_p(n+1)) \quad (p \ne 2 \ \text{if} \ F \ \text{is not totally imaginary}),
\]
where $S$ denotes the set consisting of all primes of $F$ lying above $p$ together with all infinite places of $F$ (The notation is explained in the following paragraph). Assuming that $k_1$ and $k_2$ are arithmetically equivalent, we would like to derive an isomorphism between the two cohomology groups
\[
H^2_{\rm cts}(G_{k_1,S_1},\mathbb{Z}_p(n+1))
\quad \text{and} \quad
H^2_{\rm cts}(G_{k_2,S_2},\mathbb{Z}_p(n+1)),
\]
where $S_1$ and $S_2$ are defined similarly to $S$. However, it is difficult to compare these groups directly.

 To overcome this difficulty, we apply Shapiro’s lemma (Lemma \ref{senti34wtioemto}) to obtain cohomological isomorphisms of the form
\begin{equation}
\label{rngitowjgto4meioa4}
H^2_{\rm cts}(G_{k_i,S_i},\mathbb{Z}_p(n+1))
\overset{?}{\simeq}
H^2_{\rm cts}\bigl(G_{\mathbb{Q},T},
\operatorname{Coind}_{G_{k_i,S_i}}^{G_{\mathbb{Q},T}}
\mathbb{Z}_p(n+1)\bigr),
\end{equation}
where $T = \{p,\infty\}$ which is independent of $i$ ($i=1,2$). After this reduction, it remains only to compare the coefficient modules. When $k_1$ and $k_2$ are arithmetically equivalent, priveous work of Perlis imply that these coefficient modules are in fact isomorphic if $p$ is prime to $\nu_{k_1,k_2}$.

In practice, however, in order to derive an isomorphism \eqref{rngitowjgto4meioa4}, it is necessary that $G_{k_i,S_i}$ be an open subgroup of $G_{\mathbb{Q},T}$. However, if the extension $k_i/\mathbb{Q}$ has ramified primes outside $p$, this condition fails. To resolve this issue, we enlarge the set $S_i$ slightly and thereby bridge this gap.

  Let $F$ be a number field, and let $\mathcal{O}_F$ denote the ring of integers of $F$. We fix a separable closure $\mathbb{Q}^{\rm sep}$ of the rational number field $\mathbb{Q}$, and denote the Galois group $\operatorname{Gal}(\mathbb{Q}^{\rm sep}/\mathbb{Q})$ by $G_{\mathbb{Q}}$. We also fix an embedding $F \hookrightarrow \mathbb{Q}^{\rm sep}$ of $F$ into $\mathbb{Q}^{\rm sep}$ and set $G_F := \operatorname{Gal}(\mathbb{Q}^{\rm sep}/F)$. For each finite prime $v$ of the number field $F$, we fix a prime $\widetilde{v}$ in $\mathbb{Q}^{\rm sep}$ lying over $v$. We denote the decomposition group associated with this $\widetilde{v}$ by $D_{v} \subseteq G_F$. 

Let $P = P_F$ denote the set of all primes of the number field $F$, and let $P_\infty = P_{F,\infty}$ denote the set of all infinite primes of $F$. Also, let $P_{\rm f} = P_{F,\mathrm{f}}$ denote the set of all finite primes of $F$. For a rational prime $q$, let $P_q = P_{F,q}$ denote the set of all primes of $F$ lying over $q$. Let $F'$ be an extension field of $F$ such that $\mathbb{Q} \subseteq F \subseteq F' \subseteq \mathbb{Q}^{\rm sep}$. For a set of primes $S \subseteq P_F$ of $F$, we denote by $S(F')$ the set of all primes of $F'$ lying over $S$. That is, $S(F') := \{\mathfrak{p} \in P_{F'} ; \mathfrak{p}|_{F} \in S \}$. Moreover, for a set of primes $T \subseteq P_{F'}$ of $F'$, we denote by $T_F$ the set of all primes of $F$ lying under the primes in $T$. That is, $T_F := \{\mathfrak{p} \in P_F; \exists \mathfrak{P} \in T \ \mathrm{s.t.} \ \mathfrak{P}|_{F} = \mathfrak{p}\}$. We shall denote by $\operatorname{ram}(F'/F)$ the set of all primes of $F'$ that ramify in $F'/F$.
  
Let $S \subseteq P_F$ be a set of primes of $F$, and let $F_S$ denote the maximal extension of $F$ inside $\mathbb{Q}^{\rm sep}$ unramified outside $S$. Since this is a Galois extension of $F$, we denote its Galois group by $G_{F,S} := \operatorname{Gal}(F_S/F)$. For a non-archimedean prime $v$ of $F$, the continuous homomorphism induced by restriction $D_{v} \to G_{F,S}$ induces a map on cohomology, which we write as $\mathscr{R}es_v: H_{\rm cts}^i(G_{F,S},-) \to H_{\rm cts}^i(D_v,-) $. Furthermore, we denote by $D_{v,S} \subseteq G_{F,S}$ the decomposition group of the prime obtained by restricting $\widetilde{v}$ to $F_S$. Using the natural surjection $\pi: G_F \to G_{F,S}$, we have $D_{v,S} = \pi(D_v)$.

\begin{prop}
  \label{eni4vmo34vo}
  Let $p$ be an odd prime, and let $S$ be a finite set of primes of $F$ satisfying $S \supseteq P_\infty \cup P_{p}$. Then, for $n > 0$, there is an isomorphism of abelian groups
  \[
    K_{2n}(\mathcal{O}_F)[p^\infty] \simeq \operatorname{ker}\left[H^2_{\rm cts}(G_{F,S}, \mathbb{Z}_p(n+1)) \xrightarrow{\bigoplus \mathscr{R}es_v} \bigoplus_{v \in S - (P_\infty \cup P_p)}H^2_{\rm cts}(D_{v}, \mathbb{Z}_p(n+1))\right].
  \]
  If $F$ is totally imaginary, the isomorphism also holds for $p=2$.
\end{prop}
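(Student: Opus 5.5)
We combine the Rost--Voevodsky theorem with the localization sequence in étale cohomology. The case $S_0 := P_\infty \cup P_p$ is the known identification. For $p$ odd, or $F$ totally imaginary, and $n>0$, the Quillen--Lichtenbaum conjecture (Rost--Voevodsky) gives
\[
K_{2n}(\mathcal{O}_F)\otimes\mathbb{Z}_p \simeq H^2_{\rm cts}(G_{F,S_0},\mathbb{Z}_p(n+1)).
\]
Since $K_{2n}(\mathcal{O}_F)$ is finite for $n>0$ (Borel), the left side is $K_{2n}(\mathcal{O}_F)[p^\infty]$. Here we use that $H^2_{\rm cts}(G_{F,S_0},-)$ agrees with the étale cohomology $H^2_{\rm et}(\mathcal{O}_F[1/p],\mathbb{Z}_p(n+1))$ in Jannsen's continuous sense. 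This agreement holds because every $H^1(G_{F,S_0},\mu_{p^m}^{\otimes(n+1)})$ is finite, so the $\varprojlim^1$ term vanishes by the lemma quoted from \cite[Corollary~2.7.6]{NSW}. The real places cause no trouble because $p$ is odd, or because there are no real places when $F$ is totally imaginary. This explains the hypothesis for $p=2$.

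Next we pass from $S_0$ to $S$. Write $U=\operatorname{Spec}\mathcal{O}_F[1/S]\subseteq U_0=\operatorname{Spec}\mathcal{O}_F[1/p]$; the complement is the finite set of closed points $v\in S-S_0$, all prime to $p$. We use the Gysin (localization) sequence for $\mu_{p^m}^{\otimes(n+1)}$:
\[
\cdots\to H^1(U,-)\to\bigoplus_{v}H^0(k(v),\mu_{p^m}^{\otimes n})\to H^2(U_0,-)\to H^2(U,-)\to\bigoplus_v H^1(k(v),\mu_{p^m}^{\otimes n})\to H^3(U_0,-)\to\cdots
\]
Here we use purity, $H^{i}_v(U_0,\mathcal{F})\simeq H^{i-2}(k(v),\mathcal{F}(-1))$, for the locally constant sheaves at $v\nmid p$. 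We then pass to the limit over $m$, which is legitimate since all groups are finite.

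In the limit, $H^0(k(v),\mathbb{Z}_p(n))=\varprojlim \mu_{p^m}^{\otimes n}(k(v))=0$ for $n\neq0$, because the finite field has only finitely many $p$-power roots of unity and the transition maps are multiplication by $p$ on twists. Hence $H^2(U_0)\to H^2(U)$ is injective. Its image is the kernel of $H^2(U)\to\bigoplus_v H^1(k(v),\mathbb{Z}_p(n))$. The remaining task is to identify this map with $\bigoplus\mathscr{R}es_v$. For $v\nmid p$, local Tate duality and the inflation--restriction sequence for $I_v\subseteq D_v$ give
\[
H^2(D_v,\mathbb{Z}_p(n+1))\simeq H^1(D_v/I_v,H^1(I_v,\mathbb{Z}_p(n+1)))\simeq H^1(k(v),\mathbb{Z}_p(n)).
\]
This uses $H^2(D_v/I_v,-)=0$ since $\operatorname{cd}_p\hat{\mathbb{Z}}=1$, and $H^1(I_v,\mathbb{Z}_p(n+1))=\mathbb{Z}_p(n)$ via the tame quotient. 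Under this identification, the residue map in the Gysin sequence is the restriction to the decomposition group.

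The main obstacle is making this last compatibility precise, namely that the boundary map of the localization sequence coincides with $\mathscr{R}es_v$ followed by the isomorphism above, while working with continuous (Jannsen) cohomology. I would check this at finite level $m$ via excision, $H^i_v(U_0,\mathcal{F})\simeq H^i_v(\operatorname{Spec}\mathcal{O}_{F_v}^{h},\mathcal{F})$, together with the long exact sequence for the henselian trait. That sequence identifies $H^2(D_v,-)\to H^3_v$ with the residue map. Taking $\varprojlim$ then completes the argument.
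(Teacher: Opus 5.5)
Your proposal is correct and follows essentially the same route as the paper: the case $S_0=P_\infty\cup P_p$ via Rost--Voevodsky (Weibel's Theorems~70/73) plus the Galois/\'{e}tale comparison. Then the localization sequence for $U\subseteq\operatorname{Spec}\mathcal{O}_F[1/p]$, with Gysin purity killing $H^2_Z$ because $\mathbb{Z}_p(n)^{G_{k(v)}}=0$. Finally, excision to the henselian trait $X_v$, which factors each boundary map as $\mathscr{R}es_v$ followed by the local boundary $H^2(U_v)\to H^3_v(X_v)$.

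The paper settles the compatibility you flag exactly as you propose. It shows the local boundary is an isomorphism because $H^i(X_v,\mathscr{F}_m)\simeq H^i(k(v),\mathscr{F}_m)=0$ for $i=2,3$. Your separate Hochschild--Serre computation of $H^2(D_v,\mathbb{Z}_p(n+1))$ and the appeal to local Tate duality are therefore not needed.
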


  \begin{proof}

    When $S = P_\infty \cup P_p$, this asserts that $K_{2n}(\mathcal{O}_F)[p^\infty] \simeq H^2_{\rm cts}(G_{F,S}, \mathbb{Z}_p(n+1))$. In this case, the claim can be shown as follows: for a non-negative integer $i$ and a non-empty finite set of primes $T \subseteq P_F$ of $F$, the group $H^i(\operatorname{Spec}(\mathcal{O}_{F,T}), \mu_{p^m}^{\otimes n+1})$ is finite for each $m$\cite[Proposition 2.13]{Milne}. Furthermore, by \cite[Proposition 2.9]{Milne}, there is an isomorphism between \'{e}tale cohomology and Galois cohomology for $r \geq 1$:
\[
  H^r(G_{F,T}, \mu_{p^m}^{\otimes n+1}) \simeq H^r(\operatorname{Spec}(\mathcal{O}_{F,T}), \mu_{p^m}^{\otimes n +1}).
\]
Since this isomorphism originates from the Hochschild-Serre spectral sequence, the diagram
\[
  \xymatrix{
    H^r(G_{F,T}, \mu_{p^{m+1}}^{\otimes n+1}) \ar[r]^-{\sim} \ar[d] & H^r(\operatorname{Spec}(\mathcal{O}_{F,T}), \mu_{p^{m+1}}^{\otimes n +1}) \ar[d] \\
    H^r(G_{F,T}, \mu_{p^m}^{\otimes n+1}) \ar[r]^-{\sim} & H^r(\operatorname{Spec}(\mathcal{O}_{F,T}), \mu_{p^m}^{\otimes n +1})
  }
\]
commutes naturally with respect to the morphism of sheaves $\mu_{p^{m+1}}^{\otimes n+1} \to \mu_{p^m}^{\otimes n+1}$. By taking the projective limit, we obtain the isomorphism
\[
  \varprojlim H^r(G_{F,T}, \mu_{p^m}^{\otimes n+1}) \simeq H^r(\operatorname{Spec}(\mathcal{O}_{F,T}), \mathbb{Z}_p(n+1)).
\]
The natural surjection $H^r_{\rm cts}(G_{F,T}, \mathbb{Z}_p(n+1)) \twoheadrightarrow \varprojlim H^r(G_{F,T}, \mathbb{Z}_p (n+1))$ becomes an isomorphism, because $H^{r-1}(G_{F,T} , \mu_{p^m}^{\otimes n+1})$ is finite for each $\mu_{p^m}^{\otimes n+1}$. From the above, we obtain the isomorphism of abelian groups
\[
  H^r_{\rm cts}(G_{F,T}, \mathbb{Z}_p(n+1)) \simeq H^r(\operatorname{Spec}(\mathcal{O}_{F,T}), \mathbb{Z}_p(n+1)).
\]
Applying this to $T = S$ and finally using the following theorem, the case $S = P_\infty \cup P_p$ is proved.

\begin{themA}
  For an odd prime $p$ and a positive integer $n$, there is an isomorphism
  \[
    K_{2n}(\mathcal{O}_{F})[p^\infty] \simeq H^2(\operatorname{Spec}(\mathcal{O}_{F}[1/p]), \mathbb{Z}_p(n+1)).
  \]
  \cite[Theorem 70]{Weibel3}. If $F$ is totally imaginary, the isomorphism also holds for $p=2$\cite[Theorem 73]{Weibel3}.
\end{themA}

\begin{rem}
  Regarding Theorem 70 and Theorem 73 in the literature\cite{Weibel3}, note that $K_{2n}(\mathcal{O}_F[1/p])[p^\infty] \simeq K_{2n}(\mathcal{O}_F)[p^\infty]$ holds for any number field $F$ and any rational prime $p$. From the localization exact sequence for algebraic $K$-groups, this yields a short exact sequence ($n > 0$)\cite[Theorem 7]{Weibel3}:
  \[
    0 \longrightarrow K_{2n}(\mathcal{O}_F) \longrightarrow K_{2n}(\mathcal{O}_F[1/p]) \longrightarrow \bigoplus_{\mathfrak{p}\mid p} K_{2n-1}(\mathcal{O}_F/\mathfrak{p}) \longrightarrow 0.
  \]
  Since $\mathcal{O}_F/\mathfrak{p}$ is a finite field, we can compute $K_{2n-1}(\mathcal{O}_F/\mathfrak{p}) \simeq \mathbb{Z}/((\sharp \mathcal{O}_F/\mathfrak{p})^n - 1)$\cite[Example 15]{Weibel3}. Therefore, taking the $p$-primary part of the short exact sequence yields the desired isomorphism.
\end{rem}

\begin{rem}
  Since $H^1(\operatorname{Spec}(\mathcal{O}_{F,T}), \mu_{p^m}^{\otimes n+1})$ is finite for each $m$ given a non-negative integer $i$ and a non-empty finite set of primes $T$ of $F$, the right-hand side of Theorem A may also be interpreted as the continuous \'{e}tale cohomology defined by Jannsen\cite{Jannsen}. 
\end{rem}

Therefore, in the following, we proceed with the proof of the theorem assuming that $S \supsetneq P_\infty \cup P_p$.

  Let $X := \operatorname{Spec}(\mathcal{O}_F[1/p])$, and define an open affine subscheme of $X$ by $U := \operatorname{Spec}(\mathcal{O}_{F,S})$. We also define a closed subscheme of $X$ by $Z := X - U$ (note that $Z$ is non-empty). For $n > 0$, we define \'{e}tale sheaves on $X$ by $\mathscr{F}_{m} := \mu_{p^{m}}^{\otimes n+1}$, which form an inverse system $\mathscr{F} = (\mathscr{F}_{m})_m$ with respect to $m$ (where $m$ runs over positive integers). We write the continuous \'{e}tale cohomology of Jannsen\cite{Jannsen} defined for this inverse system as $H^i_{\mathrm{cts}}(X,\mathscr{F}) = H^i_{\mathrm{cts}}(X,\left(\mathscr{F}_{m}\right))$. As the localization long exact sequence for continuous \'{e}tale cohomology, we obtain the exact sequence
\[
  \cdots \longrightarrow H^2_{Z,\mathrm{cts}}(X,\mathscr{F}) \longrightarrow H^2_{\mathrm{cts}}(X,\mathscr{F}) \longrightarrow H^2_{\mathrm{cts}}(U,\mathscr{F}) \overset{\delta}{\longrightarrow} H^3_{Z,\mathrm{cts}}(X,\mathscr{F}) \longrightarrow \cdots
\]
By the Gysin isomorphism\cite[Theorem 3.17]{Jannsen}, we have
\begin{align*}
  H^2_{Z,\mathrm{cts}}(X,\mathscr{F}) &\simeq \bigoplus_{v \in Z} H^2_{v,\mathrm{cts}}(X,\mathscr{F}) \\ 
  &\simeq \bigoplus_{v \in Z} H^0_{\mathrm{cts}}(\operatorname{Spec}(k(v)),\mathbb{Z}_p(n)) \\  
  &\simeq \bigoplus_{v \in Z} H^0(k(v),\mathbb{Z}_p(n)) \simeq \mathbb{Z}_p(n)^{G_{k(v)}} = 0,
\end{align*}
where $k(v)$ denotes the residue field at the closed point $v$. Bearing in mind that $K_{2n}(\mathcal{O}_F)[p^\infty] \simeq H^2_{\mathrm{cts}}(X,\mathscr{F})$, it follows that
\begin{align}
    \label{dfjhyg8i9w34tj04ejtmjiop}
  K_{2n}(\mathcal{O}_F)[p^\infty] \simeq H^2_{\mathrm{cts}}(X,\mathscr{F}) \simeq \operatorname{ker}\left( \delta: H^2_{\mathrm{cts}}(U,\mathscr{F}) \longrightarrow H^3_{Z,\mathrm{cts}}(X,\mathscr{F})\right).
\end{align}

For a closed point $v \in Z$, let $X_v := \operatorname{Spec}(\mathcal{O}_{F,v}^h)$ be the Henselization of $X$ at $v$. If we consider the pullback of the \'{e}tale sheaf $\mu_{p^m}^{\otimes n+1}$ on $X$ via the morphism of schemes $f: X_v \to X$, it coincides with the \'{e}tale sheaf $\mu_{p^m}^{\otimes n+1}$ on $X_v$. Therefore, by abuse of notation, we shall also denote the \'{e}tale sheaf $\mu_{p^m}^{\otimes n+1}$ on $X_v$ by $\mathscr{F}_{m}$. This yields a natural transformation of $\delta$-functors via the pullback:
\[
  f^* : H_{\mathrm{cts}}^i(X,\mathscr{F}) \longrightarrow H^i_{\mathrm{cts}}(X_v, \mathscr{F}).
\]

Now, we define $U_v := X_v \setminus \{v\} \simeq \operatorname{Spec}(F_v^h)$, which is obtained by removing the closed point $v$ from $X_v = \operatorname{Spec}(\mathcal{O}_{F,v}^h)$. By considering the pullback of sheaves under the morphism $f: X_v \to X$, we obtain the commutative diagram
\[
  \xymatrix{
    \cdots \ar[r] & H^2_{Z,\mathrm{cts}}(X,\mathscr{F}) \ar[r] \ar[d] & H^2_{\mathrm{cts}}(X,\mathscr{F}) \ar[r] \ar[d] & H^2_{\mathrm{cts}}(U,\mathscr{F}) \ar[r]^-{\delta} \ar[d] & H^{3}_{Z,\mathrm{cts}}(X,\mathscr{F}) \ar[r] \ar[d] & \cdots \\
    \cdots \ar[r] & H^2_{v,\mathrm{cts}}(X_v,\mathscr{F}) \ar[r] & H^2_{\mathrm{cts}}(X_v,\mathscr{F}) \ar[r] & H^2_{\mathrm{cts}}(U_v,\mathscr{F}) \ar[r]^-{\delta_{v}^{\rm loc}} & H^{3}_{v,\mathrm{cts}}(X_v,\mathscr{F}) \ar[r] & \cdots 
  }
\]
Here, we focus on the square
\begin{equation}
  \label{srejetnitmnot4t}
  \vcenter{
  \xymatrix{
     H^2_{\mathrm{cts}}(U,\mathscr{F}) \ar[r]^-{\delta} \ar[d] & H^3_{Z,\mathrm{cts}}(X,\mathscr{F}) \ar[d] \\
     H^2_{\mathrm{cts}}(U_v,\mathscr{F}) \ar[r]^-{\delta_v^{\rm loc}} & H^3_{v,\mathrm{cts}}(X_v,\mathscr{F})
  }
  }
\end{equation}
Since the diagram
\begin{equation}
  \label{serngtiow4trmio3mre}
  \vcenter{
  \xymatrix{
     H^3_{Z,\mathrm{cts}}(X,\mathscr{F}) \ar[d] \ar[r]^-{\sim} & \bigoplus_{v \in Z} H^3_{v,\mathrm{cts}}(X,\mathscr{F}) \ar[d]^{\mathrm{pr}_v} \\
     H^3_{v,\mathrm{cts}}(X_v,\mathscr{F}) & H^3_{v,\mathrm{cts}}(X,\mathscr{F}) \ar[l]^-{\sim}
  }
  }
\end{equation}
commutes (for the lower isomorphism, see \cite[(3.8) Proposition]{Jannsen}), we can define the map $\delta_v: H^2_{\mathrm{cts}}(U,\mathscr{F}) \to H^3_{v,\mathrm{cts}}(X_v,\mathscr{F})$ as the composition
\begin{equation*}
  H^2_{\mathrm{cts}}(U,\mathscr{F}) \xrightarrow{\delta} H^3_{Z,\mathrm{cts}}(X,\mathscr{F}) \xrightarrow{\sim} \bigoplus_{v \in Z} H^3_{v,\mathrm{cts}}(X,\mathscr{F}) \xrightarrow{\text{pr}_v} H^3_{v,\mathrm{cts}}(X,\mathscr{F}) \xrightarrow[\sim]{f^*} H^3_{v,\mathrm{cts}}(X_v,\mathscr{F}).
\end{equation*}
Consequently,
\begin{equation}
    \label{rnguiownmtoimt3e}
  \operatorname{ker}\delta = \bigcap_{v \in Z} \operatorname{ker}\delta_v
\end{equation}
holds. By combining diagrams \eqref{srejetnitmnot4t} and \eqref{serngtiow4trmio3mre}, and using \eqref{dfjhyg8i9w34tj04ejtmjiop} and \eqref{rnguiownmtoimt3e}, we obtain the following isomorphism:
\begin{equation}
    \label{erhjgio3wmjtoi4tjop}
  K_{2n}(\mathcal{O}_F)[p^\infty] \simeq \bigcap_{v\in Z}\operatorname{ker}\left[ H^2_{\rm cts}(U,\mathscr{F}) \to H^2_{\rm cts}(U_v,\mathscr{F}) \xrightarrow{\delta_v^{\rm loc}} H^2_{\rm cts}(X_v,\mathscr{F})\right]
\end{equation}

            \begin{lemm}
  The boundary map
  \[
    \delta_v^{\rm loc}: H^2_{\mathrm{cts}}(U_v,\mathscr{F}) \longrightarrow H^3_{v,\mathrm{cts}}(X_v,\mathscr{F})
  \]
  is an isomorphism.
\end{lemm}

\begin{proof}
  We cite the following theorem:

  \begin{themB}
    Let $S$ be the spectrum of a Henselian ring $A$, let $s_0$ be the closed point of $S$, let $\pi: X \to S$ be a proper morphism, and let $X_0 \to s_0$ be the closed fibre of $X/S$. If $F$ is a torsion sheaf on $X_{\rm \acute{e}t}$, then there is a canonical isomorphism
    \[
      H^i(X,F) \xrightarrow{\; \sim \;} H^i(X_0,F_0)
    \]
    where $F_0 = F|X_0$, for $i \geq 0$\cite[Corollary 2.7]{Milne}.
  \end{themB}

  Applying this theorem with $X = S = X_v$, we obtain the isomorphism
  \[
    H^i(X_v,\mathscr{F}_m) \simeq H^i(\operatorname{Spec}(k(v)),\mathscr{F}_m) \simeq H^i(k(v),\mu_{p^m}^{\otimes n+1})
  \]
  where $k(v)$ is the residue field of $v$. Since $k(v)$ is a finite field, its absolute Galois group is isomorphic to the additive group of the profinite completion of the integers $\widehat{\mathbb{Z}}$. In general, for a torsion $\widehat{\mathbb{Z}}$-module $M$, the cohomology groups can be computed as
  \[
    H^i(\widehat{\mathbb{Z}},M) \simeq \begin{cases}
        M^{\widehat{\mathbb{Z}}} & (i = 0) \\
        M_{\widehat{\mathbb{Z}}} & (i = 1) \\
        0 & (i \geq 2)
    \end{cases}
  \]
  \cite[Proposition 1.7.7]{NSW}. Therefore, $H^i(X_v,\mathscr{F}_{m})$ is finite for all $i$. Consequently, the $\varprojlim\nolimits_m^1$ term vanishes, and in general, we have an isomorphism for $r \geq 0$:
  \[
    H^r_{\rm cts}(X_v, \mathscr{F}) \simeq \varprojlim_m H^r(X_v,\mathscr{F}_{m}).
  \]
  It follows that $H^2_{\mathrm{cts}}(X_v,\mathscr{F}) = H^3_{\mathrm{cts}}(X_v,\mathscr{F}) = 0$, which implies that $\delta_v^{\rm loc}$ is an isomorphism.
\end{proof}

From this lemma and \eqref{erhjgio3wmjtoi4tjop}, we find that
\begin{equation}
    \label{dsrgtin4wntontioemrtoa}
  K_{2n}(\mathcal{O}_F)[p^\infty] \simeq \bigcap_{v \in Z}\operatorname{ker}\left[H^2_{\rm cts}(U,\mathscr{F}) \to H^2_{\rm cts}(U_v,\mathscr{F})\right]. 
\end{equation}

Let $\widetilde{U}$ be the universal covering of $U$, and let $\pi_{U}: \widetilde{U} \to U$ be the natural morphism. From the proof of \cite[Proposition 2.9]{Milne2}, it follows that $H^p(G_{F,S},(H^q(\widetilde{U},\pi_U^*\mathscr{F}_{m}))) = 0$ for $q > 0$. Therefore, the morphism
\[
  \Phi_U: H^2_{\rm cts}(\pi_1(U,z),\mathbb{Z}_p(n+1)) \longrightarrow H^2_{\mathrm{cts}}(U, \mathscr{F}) \quad (z := \operatorname{Spec}(\mathbb{Q}^{\rm sep}))
\]
obtained from the Hochschild-Serre spectral sequence in continuous \'{e}tale cohomology\cite[(3.3) Theorem]{Jannsen} is an isomorphism. Similarly, for $q > 0$, the \'{e}tale cohomology group $H^q(\widetilde{U}_v, \pi_{U_v}^*\mathscr{F}_{m})$ with respect to the universal covering $\widetilde{U}_v$ of $U_v$ is zero (where $\pi_{U_v}$ is defined analogously to $\pi_U$). Thus, the morphism
\[
  \Phi_{U_v}: H^2_{\rm cts}(\pi_1(U_v,z),\mathbb{Z}_p(n+1)) \longrightarrow H^2_{\mathrm{cts}}(U_v, \mathscr{F})
\]
obtained from the Hochschild-Serre spectral sequence is also an isomorphism.

       The restriction of $f: X_v \to X$ to $U_v$ yields a morphism $f_U: U_v \to U$, which induces a morphism on fundamental groups $f_{U*} : \pi_1(U_v,z) \to \pi_1(U,z)$. From the preceding discussion, we obtain a commutative diagram in which each row is an isomorphism:
\[
\xymatrix{
   H^2_{\rm cts}(\pi_1(U,z),\mathbb{Z}_p(n+1)) \ar[r]^-{\Phi_U}_-{\sim} \ar[d]^-{(f_{U*})^*} & H^2_{\mathrm{cts}}(U,\mathscr{F}) \ar[d]^-{f^*_U} \\
   H^2_{\rm cts}(\pi_1(U_v,z),\mathbb{Z}_p(n+1)) \ar[r]^-{\Phi_{U_v}}_-{\sim} & H^2_{\mathrm{cts}}(U_v,\mathscr{F}) 
  }
\]
Concerning the fundamental groups, the diagram
\[
  \xymatrix{
      D_v \ar[d]\ar[r]^-{\sim} & \pi_1(U_v,z) \ar[d] \\
      G_{F,S} \ar[r]_-{\sim} & \pi_1(U,z) 
  }
\]
commutes in a way that preserves the cyclotomic character (where the map $D_v \to G_{F,S}$ is the map induced by restriction). Consequently, we obtain the commutative diagram
\begin{equation}
\label{rdjgtiw0jtgot}
\vcenter{
\xymatrix{
   H^2_{\rm cts}(G_{F,S},\mathbb{Z}_p(n+1)) \ar[r]^-{\Phi_U}_-{\sim} \ar[d]^-{\mathscr{R}es_v} & H^2_{\mathrm{cts}}(U,\mathscr{F}) \ar[d]^-{f^*_U} \\
   H^2_{\rm cts}(D_v,\mathbb{Z}_p(n+1)) \ar[r]^-{\Phi_{U_v}}_-{\sim} & H^2_{\mathrm{cts}}(U_v,\mathscr{F}).
  }
}
\end{equation}
By the commutative diagram \eqref{rdjgtiw0jtgot} and the isomorphism \eqref{dsrgtin4wntontioemrtoa}, we can get the desired isomorphism
\[
  K_{2n}(\mathcal{O}_F)[p^\infty] \simeq \bigcap_{v \in Z} \left[H^2_{\rm cts}(G_{F,S},\mathbb{Z}_p(n+1) ) \xrightarrow{\mathscr{R}es_v} H^2_{\rm cts}(D_v,\mathbb{Z}_p(n+1)) \right].
\]
This completes the proof of the proposition \ref{eni4vmo34vo}.

\end{proof}

The proof of the following lemma was explained to the author by Ryoji Shimizu.

\begin{lemm}
    \label{drghiown4ogtmeo4mo}
  Let $v \in P_{F}$ be a finite prime of $F$ not lying above $p$. Then, the inflation map on cohomology
  \[
    H^2_{\rm cts}(D_{v,S},\mathbb{Z}_p(n+1)) \to H^2_{\rm cts}(D_v,\mathbb{Z}_p(n+1))
  \]
  is an isomorphism.
\end{lemm}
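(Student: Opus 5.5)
The plan is to compare both sides through the inflation–restriction (Hochschild–Serre) spectral sequence attached to the surjection $D_v \twoheadrightarrow D_{v,S}$. Write $I := \ker(D_v \to D_{v,S})$. Since $F_S \supseteq F(\mu_{p^\infty})$ (because $S \supseteq P_p \cup P_\infty$), the subgroup $I$ acts trivially on $\mathbb{Z}_p(n+1)$. Working with the finite levels $\mu_{p^m}^{\otimes n+1}$ keeps all groups finite. Passage to continuous cohomology then follows by the $\varprojlim$ argument already used in the proof of Proposition \ref{eni4vmo34vo}, via the finiteness lemma \cite[Corollary~2.7.6]{NSW} for local Galois groups. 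It therefore suffices to prove that
\[
H^2(D_{v,S},\mu_{p^m}^{\otimes n+1}) \to H^2(D_v,\mu_{p^m}^{\otimes n+1})
\]
is an isomorphism compatibly in $m$.

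First I identify $I$. Let $D_v^{\rm tr}\subseteq D_v$ be the tame quotient's preimage structure; more precisely, use the tame-inertia filtration. The $p$-Sylow quotient of $D_v$, i.e. the Galois group of the maximal pro-$p$ extension of $F_v$ (with $v\nmid p$), is $\mathbb{Z}_p \rtimes \mathbb{Z}_p$, generated by tame inertia and Frobenius. The key arithmetic input is that $F_S$ contains, locally at $\widetilde v$, all $p$-power roots of unity. By Kummer theory, it also contains the local extensions $F_v(\mu_{p^\infty}, \pi^{1/p^\infty})$ up to what global $S$-ramified extensions provide.

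This is the main obstacle: $F_S/F$ is unramified at $v$ when $v\notin S$. In the present statement, however, $v$ ranges over $S-(P_\infty\cup P_p)$, so $v\in S$ and ramification at $v$ is allowed. I would invoke the known result (Neukirch–Schmidt–Wingberg, Chapter X, e.g. Theorem 10.5.x on local extensions realized in $F_S$ for $p$-adic coefficients) that for $v\in S$ and $S\supseteq P_p\cup P_\infty$, the map $D_v\to D_{v,S}$ induces an isomorphism on maximal pro-$p$ quotients. Equivalently, the completion of $F_S$ at $\widetilde v$ contains the maximal $p$-extension of $F_v$, or at least its $p$-closure $F_v(p)$. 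Granting this, $I$ lies in the kernel of $D_v\to D_v(p)$.

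Next, the cohomology of $D_v$ with $p$-primary coefficients that are trivial on $I$ factors through this pro-$p$ quotient. Concretely, $\operatorname{Inf}: H^i(D_v(p),M)\to H^i(D_v,M)$ is an isomorphism for $p$-primary $M$ fixed by the kernel, because the kernel has pro-order prime to $p$ once we pass to $D_v(p)$, up to a prime-to-$p$ extension. Here a small care is needed: one first replaces $F_v$ by $F_v(\mu_p)$, which is a prime-to-$p$ extension, and then applies restriction–corestriction. I apply the same statement to $D_{v,S}$, which surjects to $D_v(p)$. Inflation from $D_v(p)$ to $D_{v,S}$ to $D_v$ then composes to an isomorphism, and the first inflation is also an isomorphism by the same prime-to-$p$ kernel argument. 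Hence the map in the lemma is an isomorphism.

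As an alternative check avoiding the global realization input, I would use the five-term exact sequence in degree $2$. This requires $H^1(I,\mu_{p^m}^{\otimes n+1})^{D_{v,S}}=0$ and $H^2(I,-)=0$, together with $H^1(D_{v,S},H^1(I,-))=0$. These follow once $I$ has no nontrivial pro-$p$ abelian quotient compatible with the coefficients, which is the same statement about $F_S$ realizing the local $p$-extensions at $v$.
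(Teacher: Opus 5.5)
Your reduction matches the paper's. With $I=N_{v,S}=\operatorname{Gal}(\overline{F_v}/F_{S,w})$ acting trivially on $\mu_{p^\infty}$, it suffices to know $\operatorname{cd}_p(I)=0$. Inflation--restriction for $1\to I\to D_v\to D_{v,S}\to 1$ then gives the isomorphism directly, so your detour through $D_v(p)$ and restriction--corestriction is unnecessary.

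\textbf{The gap.} You never establish $\operatorname{cd}_p(I)=0$. You call the realization of local $p$-extensions inside $F_S$ ``the main obstacle'' and then simply grant it from an unspecified ``Theorem 10.5.x''. The fact is true, but it is the entire arithmetic content of the lemma, so as written the proposal assumes what it has to prove.

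The form in which you state the black box is also not the one you need. The maximal pro-$p$ quotient $D_v(p)$ is $\mathbb{Z}_p(1)\rtimes\mathbb{Z}_p$ only when $\mu_p\subseteq F_v$. Otherwise it is just the unramified $\mathbb{Z}_p$, and $F_{S,w}\supseteq F_v(p)$ says nothing about the $p$-part of tame inertia. What is needed is that a $p$-Sylow subgroup of $D_v$ meets $I$ trivially. Your passage to $F_v(\mu_p)$ repairs this only if the black box is re-applied over the global field $F(\mu_p)$; that is legitimate, since $F(\mu_p)_S=F_S$, but it is still the black box. Likewise, in your ``alternative check'', $I$ having no nontrivial abelian pro-$p$ quotient does not by itself kill $H^2(I,-)$; you need the $p$-Sylow subgroup of $I$ to be trivial.

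\textbf{How the paper fills it.} The missing step is a short global construction.
\begin{enumerate}
\item Since $\mu_{p^\infty}\subseteq F_S$, $F_{S,w}$ contains $F_v(\mu_{p^\infty})$. This contains the unramified $\mathbb{Z}_p$-extension of $F_v$, because $v\nmid p$.
\item For tame inertia, the Hilbert class field $H$ lies in $F_S$. The principal ideal theorem gives $v\mathcal{O}_H=(\alpha)$, and $\alpha$ is a uniformizer at every prime of $H$ above $v$ because $v$ is unramified in $H/F$.
\item By Kummer theory, $H(\mu_{p^\infty},\alpha^{1/p^\infty})$ is unramified outside the primes above $v$, $p$ and $\infty$. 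Since $v\in S$, it lies in $F_S$.
\item Localizing gives $F_v(\mu_{p^\infty},\varpi^{1/p^\infty})\subseteq F_{S,w}$. The unit relating $\alpha$ to a uniformizer $\varpi$ of $F_v$ contributes only unramified $p$-power extensions, which are already in $F_v(\mu_{p^\infty})$.
\item This field exhausts the $p$-part of $D_v$: the unramified $\mathbb{Z}_p$ and the $\mathbb{Z}_p(1)$ of tame inertia, while wild inertia is pro-$\ell$ with $\ell\neq p$. Hence $\operatorname{cd}_p(I)=0$.
\end{enumerate}
Once this is supplied, the rest of your argument goes through.
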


\begin{proof}
  Let $w$ denote the restriction to $F_S$ of the prime $\widetilde{v}$ of $\mathbb{Q}^{\rm sep}$ lying over $v$. Let $F_v$ be the localization of $F$ at $v$, and let $F_{S,w}$ be the localization of $F_S$ at $w$. We write $N_{v,S}$ for the kernel of the natural surjection $\pi: D_v \to D_{v,S}$.
  
  First, since $F(\mu_{p^\infty}) \subseteq F_S$, it is clear that $\mu_{p^\infty} \subseteq F_{S,w}$ holds. If we denote the Hilbert class field of $F$ by $H$, then $H \subseteq F_S$ because $H/F$ is unramified. By the principal ideal theorem, the ideal $v\mathcal{O}_H$ in $H$ generated by the prime $v$ becomes a principal ideal; thus, we choose a generator $\alpha \in \mathcal{O}_H$ for it. For a prime $w$ of $H$ lying over $v$, we have $\operatorname{ord}_w(\alpha) = 1$ since $v$ does not ramify in $H/F$. By Kummer theory, the field $H(\mu_{p^\infty},\alpha^{1/p^\infty})$ is unramified outside the primes dividing $\alpha$ (that is, the primes lying over $v$) and the primes dividing $p$. Because $v \in S$, it follows that $H(\alpha^{1/p^\infty}) \subseteq F_S$. Therefore, if we let $M$ denote the localization of $H$ at the prime obtained by restricting $w$ to $H$, we find that $M(\varpi^{1/p^\infty}_M) \subseteq F_{S,w}$ (where $\varpi_M$ is a uniformizer of $M$). Letting $\varpi$ be a uniformizer of $F_v$, we can write $\varpi = u\cdot \varpi_M^e$ using the ramification index $e$ and a unit $u \in \mathcal{O}_M^\times$. From this, by applying Hensel's lemma, we see that $F_v(\varpi^{1/p^\infty}) \subseteq F_{S,w}$. As a result, the maximal tamely ramified pro-$p$ extension of each finite subextension of $F_{S,w}/F_v$ is contained in $F_{S,w}$. Thus, the maximal tamely ramified pro-$p$ extension of $F_{S,w}$ is equal to $F_{S,w}$ itself.

  This implies that $\operatorname{cd}_p(N_{v,S}) = 0$, and therefore $H^j(N_{v,S},\mu_{p^m}^{\otimes n+1}) = 0$ holds for any $n \geq 0$, $m \geq 1$, and $j \geq 1$. Since $H^{j-1}(N_{v,S},\mu_{p^m}^{\otimes n+1})$ is finite, for $j \geq 1$ we have
  \[
    H^j_{\rm cts}(N_{v,S},\mathbb{Z}_p(n+1)) \simeq \varprojlim_m H^j(N_{v,S},\mu_{p^m}^{\otimes n+1}) = 0.
  \]
  By the inflation-restriction exact sequence for continuous cohomology groups analogous to \cite[Proposition (1.6.7)]{NSW}, we conclude that the inflation map
  \[
    H^2_{\rm cts}(D_{v,S},\mathbb{Z}_p(n+1)) \to H^2_{\rm cts}(D_v,\mathbb{Z}_p(n+1))
  \]
  is an isomorphism.
\end{proof}

        \begin{cor}
        \label{segfunwntomnto4t}
  Let $p$ be an odd prime, and let $S$ be a finite set of primes of $F$ satisfying $S \supseteq P_\infty \cup P_p$ (where $P_\infty$ denotes the set of all infinite primes of $F$, and $P_p$ denotes the set of all primes of $F$ lying over $p$). Then, for $n > 0$, there is an isomorphism of abelian groups
  \[
    K_{2n}(\mathcal{O}_F)[p^\infty] \simeq \operatorname{ker}\left[H^2_{\rm cts}(G_{F,S}, \mathbb{Z}_p(n+1)) \xrightarrow{\bigoplus \mathrm{res}_v} \bigoplus_{v \in S - (P_\infty \cup P_p)}H^2_{\rm cts}(D_{v,S}, \mathbb{Z}_p(n+1))\right].
  \]
  If $F$ is totally imaginary, the isomorphism also holds for $p=2$.
\end{cor}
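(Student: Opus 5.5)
The corollary should follow directly from Proposition \ref{eni4vmo34vo} and Lemma \ref{drghiown4ogtmeo4mo}. The key observation is that the restriction map $\mathscr{R}es_v$ factors through $\mathrm{res}_v$. Recall that $\mathscr{R}es_v$ is induced by the composite $D_v \to G_{F,S}$, while $\mathrm{res}_v$ is the restriction from $G_{F,S}$ to its closed subgroup $D_{v,S}$. By definition $D_{v,S} = \pi(D_v)$, where $\pi: G_F \to G_{F,S}$ is the natural surjection. So the homomorphism $D_v \to G_{F,S}$ factors as the surjection $D_v \twoheadrightarrow D_{v,S}$ followed by the inclusion $D_{v,S} \hookrightarrow G_{F,S}$.

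First, by functoriality of continuous cohomology with respect to group homomorphisms (with the compatible identity map on $\mathbb{Z}_p(n+1)$, which is consistent because the cyclotomic character of $D_v$ factors through $D_{v,S}$, as $F(\mu_{p^\infty}) \subseteq F_S$), we get for each $v \in S - (P_\infty \cup P_p)$ a factorization
\[
\mathscr{R}es_v = \mathrm{inf} \circ \mathrm{res}_v : H^2_{\rm cts}(G_{F,S},\mathbb{Z}_p(n+1)) \longrightarrow H^2_{\rm cts}(D_{v,S},\mathbb{Z}_p(n+1)) \longrightarrow H^2_{\rm cts}(D_v,\mathbb{Z}_p(n+1)).
\]
Second, each such $v$ is a finite prime not above $p$, so Lemma \ref{drghiown4ogtmeo4mo} shows that the inflation map here is an isomorphism. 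Hence $\ker \mathscr{R}es_v = \ker \mathrm{res}_v$ for every such $v$. Taking the direct sum, or equivalently intersecting kernels, identifies the kernel in the corollary with the kernel in Proposition \ref{eni4vmo34vo}, and the claimed isomorphism follows.

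For the case $p=2$ with $F$ totally imaginary, Proposition \ref{eni4vmo34vo} already covers this case. The proof of Lemma \ref{drghiown4ogtmeo4mo} uses $p$ only through Kummer theory with $\mu_{p^\infty} \subseteq F_S$ and the tame pro-$p$ structure, and both go through for $p=2$. So the same argument applies there.

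The only real point to check is that the factorization of $\mathscr{R}es_v$ is compatible on coefficients. Since the action of $D_v$ on $\mathbb{Z}_p(n+1)$ is via $G_F \to G_{F,S}$, this is immediate. I do not expect any genuine obstacle; this is a routine deduction.
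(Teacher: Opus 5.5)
Your proposal is correct and matches the paper's proof: the paper also factors $\mathscr{R}es_v$ as $\mathrm{res}_v$ followed by inflation along $D_v \twoheadrightarrow D_{v,S} \hookrightarrow G_{F,S}$, and uses Lemma~\ref{drghiown4ogtmeo4mo} to identify the kernels with those in Proposition~\ref{eni4vmo34vo}. Your extra remark about $p=2$ is harmless, since that lemma is stated with no parity restriction on $p$.
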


\begin{proof}
    The image of the decomposition group $D_v$ under the natural quotient map $\pi: G_F \twoheadrightarrow G_{F,S}$ is $D_{v,S}$. Therefore, the cohomological map $\mathscr{R}es_v: H_{\rm cts}^i(G_{F,S},\mathbb{Z}_p(n+1)) \to H_{\rm cts}^i(D_v,\mathbb{Z}_p(n+1))$ can be decomposed as
    \[ H^2_{\rm cts}(G_{F,S},\mathbb{Z}_p(n+1)) \xrightarrow{\rm res} H^2_{\rm cts}(D_{v,S},\mathbb{Z}_p(n+1)) \xrightarrow{\rm inf} H^2_{\rm cts}(D_v,\mathbb{Z}_p(n+1)).\]
    Since the inflation map is an isomorphism by Lemma \ref{drghiown4ogtmeo4mo}, the corollary follows.
\end{proof}

  In Proposition \ref{eni4vmo34vo}, we saw the $K$-groups in terms of the cohomology of $G_{F,S}$. However, as mentioned at the beginning of this section, it is difficult to compare the structures of these groups in this form. Therefore, in order to rewrite them in terms of the cohomology of $G_{\mathbb{Q},S}$, we introduce the following proposition.
          
\begin{prop}
  \label{setjfiwnmtgongto4etmnoe4}
  Let $p$ be an odd prime, and let $S \subseteq P_F$ be a finite set of primes of $F$ satisfying $S \supseteq P_\infty \cup P_p \cup \operatorname{ram}(F/\mathbb{Q})$. Furthermore, suppose that there exists a finite set of rational primes $W \subseteq P_{\mathbb{Q},\mathrm{f}}$ (which may be empty) satisfying
  \[
    S - (P_\infty \cup P_p) = \bigsqcup_{q \in W} P_q.
  \]
  Then, $G_{F,S}$ is an open subgroup of $G_{\mathbb{Q},S_{\mathbb{Q}}}$. In the following statements and proofs, for simplicity, we write $G_{\mathbb{Q},S_{\mathbb{Q}}}$ as $G_{\mathbb{Q},S}$ and $D_{q,S_{\mathbb{Q}}}$ as $D_{q,S}$. Let $q$ be a rational prime. Then, for $n > 0$, there exists an isomorphism $\psi$ that makes the following diagram commute:
  \[
    \vcenter{
    \xymatrix@C=10pt{
     H^2_{\rm cts}(G_{\mathbb{Q},S}, \operatorname{Coind}_{G_{F,S}}^{G_{\mathbb{Q},S}}\mathbb{Z}_p(n+1)) \ar[d]^-{\mathrm{res}_q} \ar[rr]^-{\mathrm{sh}_{1}}_-{\sim} & & H^2_{\rm cts}(G_{F,S},\mathbb{Z}_p(n+1))  \ar[d]^-{\bigoplus_{v \mid q} \mathrm{res}_v} \\
      H^2_{\rm cts}(D_{q,S}, \operatorname{Res}_{D_{q,S}}^{G_{\mathbb{Q},S}}\operatorname{Coind}_{G_{F,S}}^{G_{\mathbb{Q},S}}\mathbb{Z}_p(n+1)) \ar[rr]_-{\psi} & & \bigoplus_{v\mid q} H^2_{\rm cts}(D_{v,S},\mathbb{Z} _p(n+1))
    }
    }
  \]
  Here, $\mathrm{sh}_1$ is the morphism on cohomology giving the Shapiro isomorphism defined in Section 2.
\end{prop}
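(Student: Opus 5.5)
The plan has two parts. First we show that $G_{F,S}$ is open in $G_{\mathbb{Q},S}$. Then we construct $\psi$ by combining the Mackey decomposition with Shapiro's lemma and the conjugation compatibility of Lemma \ref{rmgtiowmto3emtor}.

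\emph{Openness.} Write $S_{\mathbb{Q}}=\{\infty,p\}\cup W$. The hypothesis $S-(P_\infty\cup P_p)=\bigsqcup_{q\in W}P_q$ gives $S=S_{\mathbb{Q}}(F)$, i.e.\ $S$ consists of all primes of $F$ above $S_{\mathbb{Q}}$. Since $\operatorname{ram}(F/\mathbb{Q})\subseteq S$, the field $F$ is unramified outside $S_{\mathbb{Q}}$, so $F\subseteq\mathbb{Q}_{S_{\mathbb{Q}}}$.

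Next we show $F_S=\mathbb{Q}_{S_{\mathbb{Q}}}$. An extension of $F$ unramified outside $S=S_{\mathbb{Q}}(F)$ has Galois closure over $\mathbb{Q}$ unramified outside $S_{\mathbb{Q}}$, by transitivity of ramification and because $F/\mathbb{Q}$ is unramified outside $S_{\mathbb{Q}}$. Hence $F_S\subseteq\mathbb{Q}_{S_{\mathbb{Q}}}$. Conversely, $\mathbb{Q}_{S_{\mathbb{Q}}}F$ is unramified outside $S_{\mathbb{Q}}(F)=S$, so the two fields agree. Therefore $G_{F,S}=\operatorname{Gal}(\mathbb{Q}_{S_{\mathbb{Q}}}/F)$ is a closed subgroup of finite index $[F:\mathbb{Q}]$ in $G_{\mathbb{Q},S}$, hence open.

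\emph{Construction of $\psi$.} Put $G=G_{\mathbb{Q},S}$, $H=G_{F,S}$, $K=D_{q,S}$ and $M=\mathbb{Z}_p(n+1)$. By the Mackey decomposition (applicable since $H$ is open) we have
\[
\operatorname{Res}^G_K\operatorname{Coind}_H^G M\simeq\bigoplus_{g\in H\backslash G/K}\operatorname{Coind}^K_{K\cap g^{-1}Hg}({}^{g^{-1}}M).
\]
Cohomology commutes with the finite direct sum, and Shapiro's lemma (Lemma \ref{senti34wtioemto}) applies to each summand. This gives
\[
H^2_{\rm cts}(K,\operatorname{Res}\operatorname{Coind}M)\simeq\bigoplus_g H^2_{\rm cts}(K\cap g^{-1}Hg,{}^{g^{-1}}M).
\]
Conjugating by $g$ turns the $g$-summand into $H^2_{\rm cts}(gKg^{-1}\cap H,M)$; here we use that $M$ is a $G$-module, so ${}^{g^{-1}}M$ corresponds to $M$ via $m\mapsto g m$.

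We then identify the index set with the primes above $q$. Double cosets $H\backslash G/K$ correspond to primes $v\mid q$ of $F$, via $HgK\mapsto$ the restriction of $g\widetilde{q}$ to $F$. Under this correspondence $H\cap gKg^{-1}$ is the decomposition group in $G_{F,S}$ of the prime $g\widetilde q|_{F_S}$ above $v$. This group is conjugate in $H$ to $D_{v,S}$, since the fixed choice $\widetilde v$ also lies above $v$. We may choose the representatives $g$ so that $g\widetilde q=\widetilde v$ on $F_S$, and then $H\cap gKg^{-1}=D_{v,S}$ exactly. The map $\psi$ is defined as the resulting composite isomorphism.

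\emph{Commutativity.} The $v$-component of $\psi\circ\mathrm{res}_q$ is induced on cochains by the group map $D_{v,S}\to G$, $x\mapsto x$ (after the conjugation identification), together with the coefficient map $f\mapsto f(g)$ twisted by $g$. We compare this with $\mathrm{res}_v\circ\mathrm{sh}_1$, which is induced by $D_{v,S}\hookrightarrow H\hookrightarrow G$ and $f\mapsto f(1)$.

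This comparison is the main obstacle, because the representative $g$ appears both in the coefficient evaluation and in the conjugation. I expect it to reduce to Lemma \ref{rmgtiowmto3emtor}: $\mathrm{sh}_g=\mathrm{conj}_g\circ\mathrm{sh}_1$, restricted to the subgroup $K\cap g^{-1}Hg$. Restriction commutes with the conjugation maps and with $\mathrm{sh}$ at the cochain level. So composing with $\mathrm{conj}_g^{-1}$ should yield exactly $\mathrm{res}_v\circ\mathrm{sh}_1$.

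If the bookkeeping of left versus right cosets does not line up, the fallback is to adjust the choice of representatives $g$ versus $g^{-1}$. Such an adjustment changes $\psi$ only by an isomorphism, so it does not affect the existence claim.
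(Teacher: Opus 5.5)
Your proposal is correct and follows essentially the same route as the paper: restrict to $D_{q,S}$, apply the Mackey decomposition and Shapiro's lemma summand by summand, and compare with $\mathrm{res}_v\circ\mathrm{sh}_1$ via Lemma \ref{rmgtiowmto3emtor}, the key cochain identity being that both the $g$-component of $\mathrm{sh}_1\circ\mathrm{Mackey}\circ\mathrm{res}_q(\alpha)$ and the restriction of $\mathrm{sh}_g(\alpha)$ to $K\cap g^{-1}Hg$ are given by $(x_1,x_2)\mapsto\alpha(x_1,x_2)(g)$. Two of your steps supply details the paper's proof leaves implicit: your argument that $F_S=\mathbb{Q}_{S_{\mathbb{Q}}}$, which gives openness, and your choice of representatives with $g\widetilde{q}=\widetilde{v}$, which makes $H\cap gKg^{-1}=D_{v,S}$ hold exactly; the paper instead keeps the conjugated groups $\overline{g_v}^{-1}D_{v,S}\overline{g_v}$ and composes with $\mathrm{conj}_{\overline{g_v}}$ at the end.
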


            \begin{proof}

                We choose a set of representatives $\{g_i\}_i \subseteq G_{\mathbb{Q}}$ for the double cosets $G_F\backslash G_{\mathbb{Q}}/D_q$. Fix a prime $v_q$ of $F$ lying over $q$ and a prime $\widetilde{v}_q$ of $\mathbb{Q}^{\rm sep}$ lying over $v_q$; then we obtain a bijection
\[
G_F\backslash G_{\mathbb{Q}}/D_q \to \{v \mid q; \ v \ \text{is a prime of} \ F\}; \quad g_i \mapsto g_i\widetilde{v_q}|_F.
\]
Thus, for each prime $v$ lying over $q$, we denote by $g_v$ the representative $g_i$ that satisfies $g_i\widetilde{v_q}|_F = v$. Writing $\overline{g_v}$ for the image of $g_v$ under the natural surjection $\pi: G_{\mathbb{Q}} \to G_{\mathbb{Q},S}$, the set $\{\overline{g_v}\}$ forms a complete system of representatives for the double cosets $G_{F,S}\backslash G_{\mathbb{Q},S}/D_{q,S}$.

By combining the commutative diagram (Lemma \ref{rmgtiowmto3emtor})
\[
  \xymatrix@C=10pt{
    & H^2_{\rm cts}(G_{\mathbb{Q},S}, \operatorname{Coind}_{G_{F,S}}^{G_{\mathbb{Q},S}}\mathbb{Z}_p(n+1)) \ar[ld]^-{\mathrm{sh}_{\overline{g_v}}} \ar[rd]^-{\mathrm{sh}_1} & \\
    H^2_{\rm cts}(\overline{g_v}^{-1}G_{F,S}\overline{g_v},{}^{\overline{g_v}^{-1}}\mathbb{Z}_p(n+1)) & & H^2_{\rm cts}(G_{F,S},\mathbb{Z}_p(n+1)) \ar[ll]^-{\mathrm{conj}_{\overline{g_v}}}
  }
\]
with the commutative diagram arising from the functoriality of $\operatorname{conj}_{\overline{g_v}}$ (or the restriction map)
\[
  \xymatrix@C=40pt{
H^2_{\rm cts}(\overline{g_v}^{-1}G_{F,S}\overline{g_v},{}^{\overline{g_v}^{-1}}\mathbb{Z}_p(n+1)) \ar[d]_-{\mathrm{res}_v} & H^2_{\rm cts}(G_{F,S},\mathbb{Z}_p(n+1)) \ar[l]^-{\mathrm{conj}_{\overline{g_v}}}_-{\sim} \ar[d]^-{\mathrm{res}_v} \\
H^2_{\rm cts}(\overline{g_v}^{-1}D_{v,S}\overline{g_v},{}^{\overline{g_v}^{-1}}\mathbb{Z}_p(n+1)) & H^2_{\rm cts}(D_{v,S},\mathbb{Z}_p(n+1)) \ar[l]^-{\mathrm{conj}_{\overline{g_v}}}_-{\sim}
  }
\]
it suffices to show that for each $v \mid q$, there exists a morphism $\psi_v$ that makes the diagram
\[
  \vcenter{
  \xymatrix@C=10pt{
   H^2_{\rm cts}(G_{{\mathbb{Q},S}}, \operatorname{Coind}_{G_{F,S}}^{G_{\mathbb{Q},S}}\mathbb{Z}_p(n+1)) \ar[d]^-{\mathrm{res}_q} \ar[rr]^-{\mathrm{sh}_{\overline{g_v}}}_-{\sim} & & H^2_{\rm cts}(\overline{g_v}^{-1}G_{F,S}\overline{g_v},{}^{\overline{g_v}^{-1}}\mathbb{Z}_p(n+1)) \ar[d]^-{\mathrm{res}_v} \\
    H^2_{\rm cts}(D_{q,S}, \operatorname{Res}_{D_{q,S}}^{G_{\mathbb{Q},S}}\operatorname{Coind}_{G_{F,S}}^{G_{\mathbb{Q},S}}\mathbb{Z}_p(n+1)) \ar[rr]_-{\psi_v} & &  H^2_{\rm cts}(\overline{g_v}^{-1}D_{v,S}\overline{g_v},{}^{\overline{g_v}^{-1}}\mathbb{Z}_p(n+1))
  }
  }
\]
commute, and that $\bigoplus_{v \mid q} \psi_v$ is an isomorphism. First, let $\alpha \in H_{\rm cts}^2(G_{\mathbb{Q},S},\operatorname{Coind}_{G_{F,S}}^{G_{\mathbb{Q},S}}\mathbb{Z}_p(n+1))$ be an arbitrary cohomology class. We denote its image under the restriction map by $\alpha_1$:
\[
  \begin{array}{ccc}
H_{\rm cts}^2(G_{\mathbb{Q},S},\operatorname{Coind}_{G_{F,S}}^{G_{\mathbb{Q},S}}\mathbb{Z}_p(n+1))& \stackrel{\mathrm{res}}{\longrightarrow} &  H_{\rm cts}^2(D_{q,S},\operatorname{Res}_{D_{q,S}}^{G_{\mathbb{Q},S}}\operatorname{Coind}_{G_{F,S}}^{G_{\mathbb{Q},S}}\mathbb{Z}_p(n+1))\\
 \rotatebox{90}{$\in$} & & \rotatebox{90}{$\in$}\\
\alpha &\longmapsto  & \alpha_1
\end{array}
\]
where $\alpha_1$ denotes the cohomology class of the cocycle defined by $\alpha_1(g_1,g_2) := \alpha(g_1,g_2)$ ($g_1,g_2 \in D_{q,S}$). By the Mackey decomposition, we have the isomorphism
\[
  \begin{array}{ccc}
\operatorname{Res}_{D_{q,S}}^{G_{\mathbb{Q},S}}\operatorname{Coind}_{G_{F,S}}^{G_{\mathbb{Q},S}}\mathbb{Z}_p(n+1) & \stackrel{\mathrm{Mackey}}{\longrightarrow} &  \bigoplus_{v \mid q} \operatorname{Coind}_{D_{q,S} \cap \overline{g_v}^{-1}G_{F,S}\overline{g_v}}^{D_{q,S}}{}^{\overline{g_v}^{-1}}\mathbb{Z}_p(n+1)\\
 \rotatebox{90}{$\in$} & & \rotatebox{90}{$\in$}\\
f &\longmapsto  & \{d \mapsto f(\overline{g_v}d)\} \quad (d \in D_{q,S}).
\end{array}
\]
We denote the $v$-part of the image of the cohomology class $\alpha_1$ obtained through this decomposition by $\alpha_{1,v}$:
\[
  H_{\rm cts}^2(D_{q,S},\operatorname{Res}_{D_{q,S}}^{G_{\mathbb{Q},S}}\operatorname{Coind}_{G_{F,S}}^{G_{\mathbb{Q},S}}\mathbb{Z}_p(n+1)) \xrightarrow{\sim} \bigoplus_{v \mid q} H_{\rm cts}^2(D_{q,S},\operatorname{Coind}_{\overline{g_v}^{-1}D_{v,S}\overline{g_v}}^{D_{q,S}}{}^{\overline{g_v}^{-1}}\mathbb{Z}_p(n+1)); \quad \alpha_1 \mapsto (\alpha_{1,v})_v
\]
Here, we have used the fact that $D_{q,S} \cap \overline{g_v}^{-1}G_{F,S}\overline{g_v} = \overline{g_v}^{-1}D_{v,S}\overline{g_v}$. By Shapiro's lemma, we obtain the isomorphism
\[
  \begin{array}{ccc}
H_{\rm cts}^2(D_{q,S},\operatorname{Coind}_{\overline{g_v}^{-1}D_{v,S}\overline{g_v}}^{D_{q,S}}{}^{\overline{g_v}^{-1}}\mathbb{Z}_p(n+1)) & \stackrel{\mathrm{res}}{\longrightarrow} &  H_{\rm cts}^2(\overline{g_v}^{-1}D_{v,S}\overline{g_v}, {}^{\overline{g_v}^{-1}}\mathbb{Z}_p(n+1))\\
 \rotatebox{90}{$\in$} & & \rotatebox{90}{$\in$}\\
\alpha_{1,v} &\longmapsto  & \alpha_{2,v}
\end{array}
\]
where $\alpha_{2,v}$ is the cohomology class of the cocycle defined by
\[
  \alpha_{2,v}(g_1',g_2') := \alpha_{1,v}(g_1',g_2')(\overline{g_v}) \quad (g_1',g_2' \in \overline{g_v}^{-1}D_{v,S}\overline{g_v}).
\]
Therefore, if we define the morphism $\psi_v$ as the composition
\begin{align*}
  \psi_v: H_{\rm cts}^2(D_{q,S},\operatorname{Res}_{D_{q,S}}^{G_{\mathbb{Q},S}}\operatorname{Coind}_{G_{F,S}}^{G_{\mathbb{Q},S}}\mathbb{Z}_p(n+1)) &\overset{\sim}{\longrightarrow} \bigoplus_{v \mid q} H_{\rm cts}^2(D_{q,S},\operatorname{Coind}_{\overline{g_v}^{-1}D_{v,S}\overline{g_v}}^{D_{q,S}}{}^{\overline{g_v}^{-1}}\mathbb{Z}_p(n+1)) \\ 
  &\overset{\mathrm{pr}_v}{\longrightarrow} H_{\rm cts}^2(D_{q,S},\operatorname{Coind}_{\overline{g_v}^{-1}D_{v,S}\overline{g_v}}^{D_{q,S}}{}^{\overline{g_v}^{-1}}\mathbb{Z}_p(n+1)) \\
  &\overset{\sim}{\longrightarrow} H_{\rm cts}^2(\overline{g_v}^{-1}D_{v,S}\overline{g_v}, {}^{\overline{g_v}^{-1}}\mathbb{Z}_p(n+1))
\end{align*}
it is clear that $\bigoplus_{v \mid q} \psi_v$ induces an isomorphism. Furthermore, the cohomology class of the image of $\alpha$ under $\psi_v$ becomes
\begin{equation}
\label{drgnuito2ntio4io4t}
  \alpha_{2,v}(g_1',g_2') = \alpha_{1,v}(g_1',g_2')(\overline{g_v}) = \alpha(g_1',g_2')(\overline{g_v}) \quad (g_1',g_2' \in \overline{g_v}^{-1}D_{v,S}\overline{g_v}).
\end{equation}

        On the other hand, by Shapiro's lemma, $\alpha$ maps to $\alpha_{1,v}'$:
\[
  \begin{array}{ccc}
H^2_{\rm cts}(G_{\mathbb{Q},S},\operatorname{Coind}_{G_{F,S}}^{G_{\mathbb{Q},S}}\mathbb{Z}_p(n+1))& \stackrel{\mathrm{sh}_{\overline{g_v}}}{\longrightarrow} &  H^2_{\rm cts}(\overline{g_v}^{-1}G_{F,S}\overline{g_v},{}^{\overline{g_v}^{-1}}\mathbb{Z}_p(n+1))\\
 \rotatebox{90}{$\in$} & & \rotatebox{90}{$\in$}\\
\alpha &\longmapsto  & \alpha_{1,v}'
\end{array}
\]
where $\alpha_{1,v}'$ is defined by
\[
  \alpha_{1,v}'(g_1',g_2') := \alpha(g_1',g_2')(\overline{g_v}) \quad (g_1',g_2' \in \overline{g_v}^{-1}G_{F,S}\overline{g_v}).
\]
The image of this cohomology class under the restriction map clearly coincides with \eqref{drgnuito2ntio4io4t}. This establishes the commutativity of the diagram.
        
            \end{proof}

    \begin{cor}
  Let $p$ be an odd prime, and let $S \subseteq P_F$ be a finite set of primes of $F$ satisfying $S \supseteq P_\infty \cup P_p \cup \operatorname{ram}(F/\mathbb{Q})$. Furthermore, suppose that there exists a finite set of rational primes $W \subseteq P_{\mathbb{Q},\mathrm{f}}$ (which may be empty) satisfying
  \[
    S - (P_\infty \cup P_p) = \bigsqcup_{q \in W} P_q.
  \]
  Then, for $n > 0$, there is an isomorphism of abelian groups
  \begin{align*}
    &K_{2n}(\mathcal{O}_F)[p^\infty] \\
    &\qquad\simeq \operatorname{ker}\left[ H^2_{\rm cts}(G_{\mathbb{Q},S},\operatorname{Coind}_{G_{F,S}}^{G_{\mathbb{Q},S}}\mathbb{Z}_p(n+1)) \xrightarrow{\bigoplus \operatorname{res}_q} \bigoplus_{q \in W} H^2_{\rm cts}(D_{q,S},\operatorname{Res}_{D_{q,S}}^{G_{\mathbb{Q},S}}\operatorname{Coind}_{G_{F,S}}^{G_{\mathbb{Q},S}}\mathbb{Z}_p(n+1))\right].
  \end{align*}
  If $F$ is totally imaginary, the isomorphism also holds for $p=2$.
\end{cor}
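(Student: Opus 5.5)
This corollary follows by combining Corollary \ref{segfunwntomnto4t} with Proposition \ref{setjfiwnmtgongto4etmnoe4}. No new cohomological input is needed; the work is in matching the index sets and the maps.

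First, apply Corollary \ref{segfunwntomnto4t} to the given $S$. Its hypotheses $S \supseteq P_\infty \cup P_p$ and $p$ odd are satisfied (and the totally imaginary case covers $p=2$). This identifies $K_{2n}(\mathcal{O}_F)[p^\infty]$ with the kernel of
\[
H^2_{\rm cts}(G_{F,S},\mathbb{Z}_p(n+1)) \xrightarrow{\bigoplus \mathrm{res}_v} \bigoplus_{v \in S-(P_\infty\cup P_p)} H^2_{\rm cts}(D_{v,S},\mathbb{Z}_p(n+1)).
\]
Next, use the assumption $S-(P_\infty\cup P_p)=\bigsqcup_{q\in W}P_q$ to regroup the target as
\[
\bigoplus_{q\in W}\bigoplus_{v\mid q} H^2_{\rm cts}(D_{v,S},\mathbb{Z}_p(n+1)).
\]
Under this regrouping, the map becomes $\bigoplus_{q\in W}\bigl(\bigoplus_{v\mid q}\mathrm{res}_v\bigr)$. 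For $p=2$ with $F$ totally imaginary, I would check that the ingredients still apply; Proposition \ref{setjfiwnmtgongto4etmnoe4} is formal group cohomology once openness holds, so the same argument should go through.

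Then invoke Proposition \ref{setjfiwnmtgongto4etmnoe4}. The hypothesis $S\supseteq\operatorname{ram}(F/\mathbb{Q})$ makes $G_{F,S}$ open in $G_{\mathbb{Q},S}$, so Shapiro's isomorphism $\mathrm{sh}_1$ is available. For each $q\in W$, the proposition gives an isomorphism $\psi=\psi^{(q)}$ with $\psi^{(q)}\circ \mathrm{res}_q = \bigl(\bigoplus_{v\mid q}\mathrm{res}_v\bigr)\circ \mathrm{sh}_1$.

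Taking the direct sum over $q\in W$ yields a commutative square. Its top row is the isomorphism $\mathrm{sh}_1$, its bottom row is the isomorphism $\bigoplus_q\psi^{(q)}$, its left column is $\bigoplus_q\mathrm{res}_q$, and its right column is $\bigoplus_v \mathrm{res}_v$. Since both horizontal maps are isomorphisms, $\mathrm{sh}_1$ carries $\ker(\bigoplus_q\mathrm{res}_q)$ isomorphically onto $\ker(\bigoplus_v\mathrm{res}_v)$. This gives the stated isomorphism.

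The only point I expect to need care is that the same representative system and the same $\mathrm{sh}_1$ are used for all $q$ simultaneously. This holds because $\mathrm{sh}_1$ does not depend on $q$ and each $\psi^{(q)}$ is built separately. The case $W=\emptyset$ is degenerate but consistent: the kernel is then the whole $H^2$, matching Proposition \ref{eni4vmo34vo} with $S=P_\infty\cup P_p$.
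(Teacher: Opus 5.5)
Your proposal is correct and follows the paper's own proof, which likewise derives the corollary directly from Corollary \ref{segfunwntomnto4t} and Proposition \ref{setjfiwnmtgongto4etmnoe4}: regroup the primes $v$ by $q\in W$, then use that $\mathrm{sh}_1$ and $\bigoplus_{q}\psi^{(q)}$ are isomorphisms in a commutative square, so the two kernels correspond. Your remark that the proposition's argument is purely formal group cohomology once $G_{F,S}$ is open in $G_{\mathbb{Q},S}$, and hence also covers $p=2$ for totally imaginary $F$, makes explicit a point the paper leaves implicit.
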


 \begin{proof}
     The claim follows immediately from Corollary \ref{segfunwntomnto4t} and Proposition \ref{setjfiwnmtgongto4etmnoe4}.
 \end{proof}

    \section{Main Results}

        In this section, we discuss necessary conditions and sufficient conditions for a pair of number fields $k_1, k_2$ to be $K$-equivalent or almost $K$-equivalent. First, we observe that arithmetical equivalence is a sufficient condition for two number fields to be almost $K$-equivalent. Furthermore, we show that if either $k_1$ or $k_2$ is totally real, the converse holds; that is, arithmetical equivalence is a necessary and sufficient condition for them to be almost $K$-equivalent. We also show that if both $k_1$ and $k_2$ are totally imaginary, then local integral equivalence is a sufficient condition for $K$-equivalence.
    
\begin{defi}
  Two number fields $k_1, k_2$ are said to be \textit{$K$-equivalent} if for all non-negative integers $n$, there is an isomorphism
  \[
    K_n(\mathcal{O}_{k_1}) \simeq K_n(\mathcal{O}_{k_2}).
  \]
  Moreover, if there exists a finite set of rational primes $S \subseteq P_{\mathbb{Q},\mathrm{f}}$ such that for any $p \notin S$ and any $n \geq 0$ there is an isomorphism
  \[
    K_n(\mathcal{O}_{k_1})[p^\infty] \simeq K_n(\mathcal{O}_{k_2})[p^\infty],
  \]
  then $k_1, k_2$ are said to be \textit{almost $K$-equivalent}.
\end{defi}

\begin{them}
  \label{hguhtuoiwntoir3}
  Let $k_1, k_2$ be number fields. Let $L$ be a finite Galois extension of $\mathbb{Q}$ containing both $k_1$ and $k_2$, and set $G := \operatorname{Gal}(L/\mathbb{Q})$, $H_1 := \operatorname{Gal}(L/k_1)$, and $H_2 := \operatorname{Gal}(L/k_2)$. If $p$ is an odd prime such that there is an isomorphism of $\mathbb{Z}[G]$-modules $\operatorname{Ind}_{H_1}^G \mathbb{Z}_p \simeq \operatorname{Ind}_{H_2}^G\mathbb{Z}_p$, then for any $n \geq 0$ there is an isomorphism of abelian groups
  \[
    K_{2n}(\mathcal{O}_{k_1})[p^\infty] \simeq K_{2n}(\mathcal{O}_{k_2})[p^\infty]
  \]
  (where $\operatorname{Ind}_{H_i}^G \mathbb{Z}_p := \mathbb{Z}[G] \otimes_{\mathbb{Z}[H_i]} \mathbb{Z}_p \simeq \mathbb{Z}_p[G/H_i]$). In particular, if either $k_1$ or $k_2$ is totally imaginary, the same holds for $p = 2$.
\end{them}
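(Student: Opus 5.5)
The plan is to treat $n=0$ and $n>0$ separately. The case $n>0$ reduces, via the last Corollary of \S 3, to comparing the coefficient modules $\operatorname{Coind}_{G_{k_i,S}}^{G_{\mathbb{Q},S}}\mathbb{Z}_p(n+1)$.

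\textbf{Case $n=0$.} Here $K_0(\mathcal{O}_{k_i})[p^\infty]\simeq \operatorname{Cl}(k_i)[p^\infty]$. I would invoke Perlis' argument \cite[Theorem 3]{Perlis2}. Its input is exactly an isomorphism $\mathbb{Z}_p[G/H_1]\simeq \mathbb{Z}_p[G/H_2]$ of $\mathbb{Z}_p[G]$-modules, which is also how $\nu_{k_1,k_2}$ is defined. Concretely, $\operatorname{Cl}(k_i)[p^\infty]$ is recovered functorially from the $\mathbb{Z}_p[G]$-module $\operatorname{Cl}(L)[p^\infty]$ together with this permutation module. So it suffices to check that Perlis' proof uses only this hypothesis; I take this as known.

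\textbf{Setting up a common $S$ for $n>0$.} Let $W$ be the set of rational primes $q\neq p$ that ramify in $L$. Put $T=\{p,\infty\}\cup W$, and let $S_i:=T(k_i)$ be the set of all primes of $k_i$ above $T$. Then $S_i\supseteq P_\infty\cup P_p\cup\operatorname{ram}(k_i/\mathbb{Q})$ and $S_i-(P_\infty\cup P_p)=\bigsqcup_{q\in W}P_q$. Moreover $(S_i)_{\mathbb{Q}}=T$ is the same for $i=1,2$. So the hypotheses of Proposition \ref{setjfiwnmtgongto4etmnoe4} hold, and the last Corollary of \S 3 expresses $K_{2n}(\mathcal{O}_{k_i})[p^\infty]$ as the kernel of
\[
\textstyle\bigoplus_{q\in W}\operatorname{res}_q \colon H^2_{\rm cts}(G_{\mathbb{Q},T},M_i)\to \bigoplus_{q\in W}H^2_{\rm cts}(D_{q,T},\operatorname{Res}M_i),
\]
where $M_i:=\operatorname{Coind}_{G_{k_i,S_i}}^{G_{\mathbb{Q},T}}\mathbb{Z}_p(n+1)$. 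The groups $G_{\mathbb{Q},T}$ and $D_{q,T}$ are the same for $i=1,2$; only $M_i$ differs.

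\textbf{Identifying the coefficient modules.} Since $L$ is unramified outside $T$, we have $L\subseteq \mathbb{Q}_T$. Put $N:=\operatorname{Gal}(\mathbb{Q}_T/L)$, which is a closed normal subgroup of $G_{\mathbb{Q},T}$ contained in both open subgroups $G_{k_i,S_i}$, with $G_{\mathbb{Q},T}/N=G$ and $G_{k_i,S_i}/N=H_i$. The Projection formula (Lemma \ref{gtjwi4tjmo3tji4ot}), applied with $M=\mathbb{Z}_p$ and $N=\mathbb{Z}_p(n+1)$, gives
\[
M_i\simeq \operatorname{Coind}_{G_{k_i,S_i}}^{G_{\mathbb{Q},T}}(\mathbb{Z}_p)\,\widehat{\otimes}_{\mathbb{Z}_p}\,\mathbb{Z}_p(n+1).
\]
Lemma \ref{rejtgiownmgtiom} then identifies $\operatorname{Coind}_{G_{k_i,S_i}}^{G_{\mathbb{Q},T}}(\mathbb{Z}_p)$ with $\operatorname{Inf}_G^{G_{\mathbb{Q},T}}\operatorname{Coind}_{H_i}^G\mathbb{Z}_p$. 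For the finite group $G$, coinduction agrees with induction, so this is $\operatorname{Inf}\,\mathbb{Z}_p[G/H_i]$. A $\mathbb{Z}[G]$-isomorphism $\mathbb{Z}_p[G/H_1]\simeq\mathbb{Z}_p[G/H_2]$ is automatically $\mathbb{Z}_p$-linear, since it is additive between finite free $\mathbb{Z}_p$-modules and hence continuous. Inflating it and tensoring with $\mathbb{Z}_p(n+1)$ yields an isomorphism $M_1\simeq M_2$ in $\mathscr{C}_{G_{\mathbb{Q},T}}$, and restricting to each $D_{q,T}$ gives the corresponding isomorphisms there. By functoriality of continuous cohomology in the coefficients, this isomorphism commutes with every $\operatorname{res}_q$, so the two kernels are isomorphic.

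\textbf{The case $p=2$.} Suppose $k_1$, say, is totally imaginary. By Mackey decomposition applied to the subgroup generated by complex conjugation $c$, the number of real places of $k_i$ equals the number of fixed points of $c$ on $G/H_i$. This number is the rank of the $c$-invariants modulo the image of the norm, hence an invariant of the $\mathbb{Z}_2[\langle c\rangle]$-module $\mathbb{Z}_2[G/H_i]$. So $k_2$ is also totally imaginary, and the $p=2$ clauses of the Corollary and of Theorem A apply. I expect the main obstacle to be the $n=0$ case, which is not covered by the \S 3 machinery and must be borrowed from Perlis. A secondary point to verify is that the isomorphism $\operatorname{sh}_1$ and the maps $\psi$ of the Corollary are natural enough that transporting the coefficient isomorphism preserves the kernels; this holds because everything is functorial in $M_i$.
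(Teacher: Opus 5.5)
Your proposal is correct and is essentially the paper's proof: you use the same common set $T$ containing $p$, $\infty$ and the primes ramified in $L$ (so that $L\subseteq\mathbb{Q}_T$), the same identification of $\operatorname{Coind}_{G_{k_i,S_i}}^{G_{\mathbb{Q},T}}\mathbb{Z}_p(n+1)$ with $\mathbb{Z}_p(n+1)\widehat{\otimes}_{\mathbb{Z}_p}\operatorname{Inf}\,\mathbb{Z}_p[G/H_i]$ via Lemma \ref{rejtgiownmgtiom} and the projection formula, the same transport of the kernels of $\bigoplus_q\operatorname{res}_q$ by functoriality, and Perlis' Theorem C for $n=0$. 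The only deviation is at $p=2$, where you count real places directly as fixed points of complex conjugation via $\hat H^0(\langle c\rangle,\mathbb{Z}_2[G/H_i])$ instead of going through arithmetic equivalence and Perlis' signature theorem, which is equally valid; your parenthetical sketch of Perlis' argument through the $G$-module $\operatorname{Cl}(L)[p^\infty]$ should not be relied on (e.g.\ $\operatorname{Cl}(L)[p^\infty]^{H_i}$ can differ from $\operatorname{Cl}(k_i)[p^\infty]$ when $p\mid[L:k_i]$), but nothing rests on it because you, like the paper, simply cite the theorem.
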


\begin{proof}
  For each $i=1,2$, we choose a finite set $S_i$ of primes of $k_i$ such that $S_i \supseteq P_{k_i,\infty} \cup P_{k_i,p} \cup \operatorname{ram}(L/\mathbb{Q})_{k_i}$ and, furthermore, there exists a finite set of rational primes $W \subseteq P_{\mathbb{Q},\mathrm{f}}$ (which may be empty) independent of $i$ satisfying
  \[
    S_i - (P_{k_i,\infty} \cup P_{k_i,p}) = \bigsqcup_{q \in W} P_{k_i,q}
  \]
  (such a choice of $S_i$ always exists). Then we have $G_{\mathbb{Q},(S_1)_{\mathbb{Q}}} = G_{\mathbb{Q},(S_2)_{\mathbb{Q}}}$, which we write simply as $G_{\mathbb{Q},S}$. Clearly, $G_{L,S_i(L)}$ and $G_{k_i,S_i}$ are open subgroups of $G_{\mathbb{Q},S}$. For each $i=1,2$, Lemma \ref{rejtgiownmgtiom} yields an isomorphism of $\mathbb{Z}_p[[G_{\mathbb{Q},S}]]$-modules
  \[
    \operatorname{Coind}_{G_{k_i,S_i}}^{G_{\mathbb{Q},S}} (\mathbb{Z}_p) \simeq \operatorname{Inf}_G^{G_{\mathbb{Q},S}}\left(\operatorname{Coind}_{H_i}^{G} (\mathbb{Z}_p)\right).
  \]
  Since the coinduced module and the induced module are isomorphic for a finite group, we have $\operatorname{Coind}_{H_i}^{G} (\mathbb{Z}_p) \simeq \operatorname{Ind}_{H_i}^G (\mathbb{Z}_p)$. Thus, from the assumption of the theorem, we obtain the isomorphism $\operatorname{Coind}_{G_{k_1,S_1}}^{G_{\mathbb{Q},S}} (\mathbb{Z}_p) \simeq \operatorname{Coind}_{G_{k_2,S_2}}^{G_{\mathbb{Q},S}} (\mathbb{Z}_p)$.
      
  Decomposing the action of the original representation by the projection formula (Lemma \ref{gtjwi4tjmo3tji4ot}), we obtain an isomorphism of $\mathbb{Z}_p[[G_{\mathbb{Q},S}]]$-modules
  \[
    \operatorname{Coind}_{G_{k_i,S_i}}^{G_{\mathbb{Q},S}} (\mathbb{Z}_p(n+1)) \simeq \mathbb{Z}_p(n+1) \widehat{\otimes}_{\mathbb{Z}_p} \operatorname{Coind}_{G_{k_i,S_i}}^{G_{\mathbb{Q},S}} (\mathbb{Z}_p).
  \]
  Since the action on $\mathbb{Z}_p(n+1)$ does not depend on $k_i$, the isomorphism
  \[
    \operatorname{Coind}_{G_{k_1,S_1}}^{G_{\mathbb{Q},S}} (\mathbb{Z}_p(n+1)) \simeq \operatorname{Coind}_{G_{k_2,S_2}}^{G_{\mathbb{Q},S}} (\mathbb{Z}_p(n+1))
  \]
  follows. Choosing an isomorphism $\mathcal{F}: \operatorname{Coind}_{G_{k_1,S_1}}^{G_{\mathbb{Q},S}} (\mathbb{Z}_p(n+1)) \overset{\sim}{\to}\operatorname{Coind}_{G_{k_2,S_2}}^{G_{\mathbb{Q},S}} (\mathbb{Z}_p(n+1))$, the functoriality of the restriction map yields an isomorphism $\mathcal{F}_*'$ that makes the diagram
  \[
    \xymatrix@C=50pt{
      H^2_{\rm cts}(G_{\mathbb{Q},S},\operatorname{Coind}_{G_{k_1,S_1}}^{G_{\mathbb{Q},S}}\mathbb{Z}_p(n+1)) \ar[r]^-{\bigoplus_{q \in W} \mathrm{res}_q} \ar[d]^-{\mathcal{F}_*} & \bigoplus_{q \in W} H^2_{\rm cts}(D_{q,S},\operatorname{Res}_{D_{q,S}}^{G_{\mathbb{Q},S}}\operatorname{Coind}_{G_{k_1,S_1}}^{G_{\mathbb{Q},S}}\mathbb{Z}_p(n+1)) \ar[d]^-{\mathcal{F}_*'} \\
      H^2_{\rm cts}(G_{\mathbb{Q},S},\operatorname{Coind}_{G_{k_2,S_2}}^{G_{\mathbb{Q},S}}\mathbb{Z}_p(n+1)) \ar[r]^-{\bigoplus_{q \in W} \mathrm{res}_q} & \bigoplus_{q \in W} H^2_{\rm cts}(D_{q,S},\operatorname{Res}_{D_{q,S}}^{G_{\mathbb{Q},S}}\operatorname{Coind}_{G_{k_2,S_2}}^{G_{\mathbb{Q},S}}\mathbb{Z}_p(n+1))
    }
  \]
  commute (where we write $D_{q,(S_1)_{\mathbb{Q}}} = D_{q,(S_2)_{\mathbb{Q}}}$ simply as $D_{q,S}$). Therefore, the kernels of the two horizontal maps are isomorphic via $\mathcal{F}_*$, which yields the desired isomorphism regarding the $p$-primary part of the $K$-groups for $n > 0$.

        When $n=0$, the $K$-group for a number field $F$ satisfies the isomorphism $K_0(\mathcal{O}_F) \simeq \mathbb{Z} \oplus \operatorname{Cl}(F)$. Therefore, the assertion follows from the next theorem:
\begin{themC}[Perlis]
  If $p$ is a prime such that there is an isomorphism of $\mathbb{Z}[G]$-modules $\operatorname{Ind}_{H_1}^G \mathbb{Z}_p \simeq \operatorname{Ind}_{H_2}^G\mathbb{Z}_p$, then we obtain an isomorphism of abelian groups
  \[
    \operatorname{Cl}(k_1)[p^\infty] \simeq \operatorname{Cl}(k_2)[p^\infty]
  \]
  \cite[Theorem 3]{Perlis2}.
\end{themC}

If either $k_1$ or $k_2$ is totally imaginary, then both are totally imaginary. This is because $\operatorname{Ind}_{H_1}^G\mathbb{Z}_p \simeq \operatorname{Ind}_{H_2}^G\mathbb{Z}_p$ implies $\operatorname{Ind}_{H_1}^G\mathbb{Q} \simeq \operatorname{Ind}_{H_2}^G\mathbb{Q}$, which means that $k_1$ and $k_2$ are arithmetically equivalent. Arithmetically equivalent number fields have the same number of real primes and the same number of complex primes, respectively\cite[Theorem 1]{Perlis}.
    \end{proof}

    \begin{rem}
  In the paper \cite{Perlis2}, Perlis proves Theorem C for a pair of arithmetically equivalent number fields $k_1, k_2$ using their common Galois closure $L$. However, in the same manner as the discussion immediately preceding Corollary \ref{nr4githto4wtio4at}, a similar result can be obtained even if $L$ is replaced by any finite Galois extension of $\mathbb{Q}$ containing $k_1$ and $k_2$.
\end{rem}

Let $k_1, k_2$ be a pair of arithmetically equivalent fields; then they possess a common Galois closure $L$. Letting their respective Galois groups be $G := \operatorname{Gal}(L/\mathbb{Q})$, $H_1 := \operatorname{Gal}(L/k_1)$, and $H_2 := \operatorname{Gal}(L/k_2)$, there is an isomorphism of $\mathbb{Q}[G]$-modules $\operatorname{Ind}_{H_1}^G\mathbb{Q} \simeq \operatorname{Ind}_{H_2}^G\mathbb{Q}$, which implies that $\operatorname{Ind}_{H_1}^G \mathbb{Z}_p \simeq \operatorname{Ind}_{H_2}^G\mathbb{Z}_p$ for almost all primes $p$. We then define the integer $\nu_{k_1,k_2}$ to be the smallest positive integer such that
\[
  p \nmid \nu_{k_1,k_2} \iff \operatorname{Ind}_{H_1}^G \mathbb{Z}_p \simeq \operatorname{Ind}_{H_2}^G\mathbb{Z}_p.
\]
Concerning this integer, it is known that $\nu_{k_1,k_2} \mid [L:k_1]$\cite[Theorem 1]{Perlis2}.

\begin{rem}
    The $\nu_{k_1,k_2}$ defined by Perlis\cite{Perlis2} differs slightly from our definition. In his definition, the primes may occur with multiplicity (i.e., as prime powers). However, the sets of prime factors coincide exactly, so the conclusion is unaffected by which definition one adopts. Here, for simplicity, we adopt the definition given above.
\end{rem}

\begin{cor}
  \label{wengrtuan3oin}
  If $k_1, k_2$ are arithmetically equivalent number fields, then all algebraic $K$-groups of odd degree are isomorphic:
  \[
    K_{2n+1}(\mathcal{O}_{k_1}) \simeq K_{2n+1}(\mathcal{O}_{k_2}).
  \]
  Furthermore, for any prime $p$ not dividing $2\nu_{k_1,k_2}$, there is an isomorphism of $K$-groups
  \[
    K_{2n}(\mathcal{O}_{k_1})[p^\infty] \simeq K_{2n}(\mathcal{O}_{k_2})[p^\infty].
  \]
  Consequently, $k_1$ and $k_2$ are almost $K$-equivalent. In particular, if either $k_1$ or $k_2$ is totally imaginary, the isomorphism $K_{2n}(\mathcal{O}_{k_1})[p^\infty] \simeq K_{2n}(\mathcal{O}_{k_2})[p^\infty]$ holds for any prime $p$ not dividing $\nu_{k_1,k_2}$.
\end{cor}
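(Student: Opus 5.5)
The corollary has two parts. The even-degree statement follows from Theorem \ref{hguhtuoiwntoir3}. The odd-degree statement needs an independent argument about the finitely generated groups $K_{2n+1}(\mathcal{O}_{k_i})$.

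\textbf{Even degrees.} Let $L$ be the common Galois closure, with $G$, $H_1$, $H_2$ as in the definition of $\nu_{k_1,k_2}$. By that definition, every prime $p\nmid \nu_{k_1,k_2}$ satisfies $\operatorname{Ind}_{H_1}^G\mathbb{Z}_p\simeq\operatorname{Ind}_{H_2}^G\mathbb{Z}_p$. If moreover $p$ is odd, Theorem \ref{hguhtuoiwntoir3} gives $K_{2n}(\mathcal{O}_{k_1})[p^\infty]\simeq K_{2n}(\mathcal{O}_{k_2})[p^\infty]$ for all $n\ge 0$. If one of the fields is totally imaginary, the same theorem also covers $p=2$ whenever $2\nmid\nu_{k_1,k_2}$. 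This settles the even-degree assertions.

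\textbf{Odd degrees.} Since $K_{2n+1}(\mathcal{O}_{k_i})$ is finitely generated, it suffices to show that the ranks agree and that the torsion subgroups are isomorphic.

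\emph{Ranks.} Borel's theorem says the rank of $K_{2n+1}(\mathcal{O}_F)$ is $r_1+r_2$ if $n\equiv 0 \pmod 2$ (for $n\ge1$), is $r_2$ if $n\equiv1\pmod 2$, and is $r_1+r_2-1$ if $n=0$. By \cite[Theorem 1]{Perlis}, arithmetically equivalent fields have the same $r_1$ and $r_2$, so the ranks agree.

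\emph{Torsion.} For $n\ge1$, the torsion of $K_{2n+1}(\mathcal{O}_F)$ is known to be cyclic, isomorphic to $H^0(F,\mathbb{Q}/\mathbb{Z}(n+1))$ up to a possible factor of $2$ when $F$ has real places. This is the group $\mathbb{Z}/w_{n+1}(F)$, where
\[
w_{m}(F)=\max\{N : \operatorname{Gal}(F(\mu_N)/F)\text{ has exponent dividing } m\}.
\]
For $n=0$, the torsion of $\mathcal{O}_F^\times$ is $\mu(F)$, which Perlis showed to be common to both fields. For general $n$, the invariant $w_{n+1}(F)$ is determined by the $\mathbb{Q}$-linear permutation character $\operatorname{Ind}_{H_i}^G\mathbb{Q}$ restricted to the cyclotomic quotients. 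More concretely, $\ell^a\mid w_{m}(F)$ if and only if the image of $G_F$ in $(\mathbb{Z}/\ell^a)^\times$ has exponent dividing $m$.

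Arithmetical equivalence of $k_1$ and $k_2$ implies that $k_1\cap M$ and $k_2\cap M$ coincide for every abelian extension $M/\mathbb{Q}$. Indeed, $\mathbb{Q}(\mu_N)\cap k_i$ is read off from the multiplicity of the characters of $\operatorname{Gal}(\mathbb{Q}(\mu_N)/\mathbb{Q})$ in $\operatorname{Ind}\mathbb{Q}$; equivalently, the zeta function factors through Dirichlet $L$-functions via inflation. Hence the images of $G_{k_1}$ and $G_{k_2}$ in $\operatorname{Gal}(\mathbb{Q}(\mu_N)/\mathbb{Q})$ coincide, so $w_{n+1}(k_1)=w_{n+1}(k_2)$.

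The 2-part correction for real places depends only on $r_1$ and on $n \bmod 4$, so it also agrees. This gives $K_{2n+1}(\mathcal{O}_{k_1})\simeq K_{2n+1}(\mathcal{O}_{k_2})$ for all $n\ge0$.

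The main obstacle I expect is the $2$-primary part of the odd torsion for non-totally-imaginary fields. There I would cite Rognes–Weibel's computation, which expresses the answer purely in terms of $r_1$, $n \bmod 8$, and $w_{n+1}$. Finally, almost $K$-equivalence follows with the finite exceptional set consisting of the prime divisors of $2\nu_{k_1,k_2}$.
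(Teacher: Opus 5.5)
Your proof is correct. The even-degree part is exactly the paper's argument: Theorem \ref{hguhtuoiwntoir3} for $p\nmid 2\nu_{k_1,k_2}$, or for $p\nmid\nu_{k_1,k_2}$ in the totally imaginary case.

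For odd degrees you take a genuinely different route. The paper never analyzes the structure of $K_{2n+1}(\mathcal{O}_{k_i})$. For $n=0$ it uses $\mathcal{O}_{k_1}^\times\simeq\mathcal{O}_{k_2}^\times$. For $n>0$ it cites Phagan's result $K_{2n+1}(k_1)\simeq K_{2n+1}(k_2)$ and transfers this to the rings of integers via the localization isomorphism $K_{2n+1}(\mathcal{O}_F)\simeq K_{2n+1}(F)$.

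You instead split each group into rank and torsion:
\begin{itemize}
\item For the ranks, you use Borel together with Perlis's invariance of $(r_1,r_2)$.
\item For the torsion, you use the Harris--Segal/Rognes--Weibel description: it is cyclic of order $w_{n+1}(F)$, up to a $2$-power correction that depends only on whether $r_1>0$ and on $n \bmod 4$.
\item You then show that $w_{n+1}(F)$ depends only on $F\cap\mathbb{Q}^{\rm ab}$, which is an arithmetic-equivalence invariant.
\end{itemize}

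That last step is right, but you should justify it by Frobenius reciprocity rather than the vaguer ``factoring through Dirichlet $L$-functions.'' A linear character $\chi$ of $G$ occurs in $\operatorname{Ind}_{H_i}^G\mathbb{Q}$ if and only if $\chi|_{H_i}=1$. So equal permutation characters force $H_1[G,G]=H_2[G,G]$. Since $k_i\subseteq L$, you have $k_i\cap\mathbb{Q}(\mu_N)=k_i\cap(L\cap\mathbb{Q}(\mu_N))$, so you do not need $\mathbb{Q}(\mu_N)\subseteq L$.

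What each route buys: yours is more transparent. It shows explicitly that the odd-degree groups depend only on the signature and the maximal abelian subfield, and it never passes through $K_*(F)$. The cost is invoking the full $2$-primary computation of Rognes--Weibel for real fields. The paper's route is two lines but treats the odd-degree comparison as a black box imported from Phagan.
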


\begin{proof}
  Regarding the isomorphism of $K$-groups of odd degree, in the case $n = 0$, we have $K_1(\mathcal{O}_{k_i}) \simeq \mathcal{O}_{k_i}^\times$. Since $\mathcal{O}_{k_1}^\times \simeq \mathcal{O}_{k_2}^\times$ holds for arithmetically equivalent number fields $k_1, k_2$, we have $K_1(\mathcal{O}_{k_1}) \simeq K_1(\mathcal{O}_{k_2})$. In the case $n > 0$, we have $K_{2n+1}(k_1) \simeq K_{2n+1}(k_2)$ by \cite[Proposition 1.7]{Phagan}. In general, the isomorphism $K_{2n+1}(R) \simeq K_{2n+1}(\operatorname{Frac}(R))$ holds for the ring of integers $R$ of a global field\cite[Theorem 7]{Weibel3}, so we obtain the isomorphism $K_{2n+1}(\mathcal{O}_{k_1}) \simeq K_{2n+1}(\mathcal{O}_{k_2})$. For $K$-groups of even degree, the isomorphism of the $p$-primary parts follows from Theorem \ref{hguhtuoiwntoir3}.
\end{proof}
        
        \begin{rem}
  Komatsu's result\cite{Komatsu} was as follows: for a prime $p$ not dividing $[L:k_1]$, the $p$-primary parts of the higher algebraic $K$-groups of the rings of integers of $k_1$ and $k_2$ are isomorphic. 
  
  Since $\nu_{k_1,k_2} \mid [L:k_1]$, Corollary \ref{wengrtuan3oin} refines Komatsu's result except for the case $p=2$, and strictly strengthens Komatsu's result when $k_1$ and $k_2$ are totally imaginary (in general, the prime factors of $\nu_{k_1,k_2}$ and those of $[L:k_1]$ can differ; that is, there exist examples of primes that divide $[L:k_1]$ but do not divide $\nu_{k_1,k_2}$. Such an example can be found in \cite[Example 2]{Perlis2}). 

  Regarding Theorem \ref{hguhtuoiwntoir3}, a similar fact is expected to hold for $p=2$ even if $k_1$ and $k_2$ are not totally imaginary. This is a natural conjecture since Komatsu's result induces an isomorphism of the $2$-primary parts for any pair of number fields provided that $2 \nmid [L:k_1]$. For the $2$-primary part of the $K$-groups of the ring of integers of a general number field, Theorem A does not hold as it is, and it is necessary to take into account the contribution of the real primes in \'{e}tale cohomology\cite[Theorem 93]{Weibel3}. However, a similar proof technique might be applicable by investigating the structure of the cohomology groups in more detail.
\end{rem}

\begin{cor}
  \label{seniftw3notmne4to4jet}
  Let $k_1, k_2$ be totally real algebraic number fields. Then, the following conditions are all equivalent:
  \begin{enumerate}[(1)]
    \item $k_1$ and $k_2$ are arithmetically equivalent,
    \item there exists a positive integer $\nu > 0$ such that for any prime $p$ coprime to $\nu$,
    \[
      \sharp K_{n}(\mathcal{O}_{k_1})[p^\infty] = \sharp K_n(\mathcal{O}_{k_2})[p^\infty]
    \]
    holds,
    \item $k_1$ and $k_2$ are almost $K$-equivalent.
  \end{enumerate}
\end{cor}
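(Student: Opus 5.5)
The implications (1)$\Rightarrow$(3)$\Rightarrow$(2) are formal, so the real content is (2)$\Rightarrow$(1). For (1)$\Rightarrow$(3) I would invoke Corollary \ref{wengrtuan3oin}: arithmetically equivalent fields have isomorphic odd $K$-groups, and isomorphic even $p$-primary parts for every $p\nmid 2\nu_{k_1,k_2}$. The finite exceptional set is $\{p : p\mid 2\nu_{k_1,k_2}\}$, so $k_1,k_2$ are almost $K$-equivalent. For (3)$\Rightarrow$(2), take $\nu$ to be the product of the finitely many exceptional primes in the definition of almost $K$-equivalence; isomorphic groups have equal orders.

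For (2)$\Rightarrow$(1) I would pass from $K$-groups to special values of zeta functions. Let $F$ be totally real and $m\geq 1$. The Birch--Tate/Lichtenbaum formula, now a theorem by Wiles' proof of the main conjecture combined with Rost--Voevodsky (Theorem A), gives
\[
  \zeta_F(1-2m) = \pm\, 2^{a}\,\frac{\sharp K_{4m-2}(\mathcal{O}_F)}{\sharp K_{4m-1}(\mathcal{O}_F)_{\rm tors}},
\]
with the sign $(-1)^{m[F:\mathbb{Q}]}$ and some power of $2$. Fix the integer $\nu$ of (2) and let $\Sigma$ be the set of primes dividing $2\nu$. Applying (2) to $K_{4m-2}$ and $K_{4m-1}$ for all $p\notin\Sigma$ shows that
\[
  r_m := \zeta_{k_1}(1-2m)/\zeta_{k_2}(1-2m)
\]
is a $\Sigma$-unit in $\mathbb{Q}^\times$ for every $m$. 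The goal is then to show that a family $\{r_m\}_m$ of $\Sigma$-units forces $\zeta_{k_1}=\zeta_{k_2}$.

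To get there, I would use the functional equation
\[
  \zeta_F(1-2m) = (-1)^{m d}\, 2^{d}\, |d_F|^{2m-1/2}\,\bigl((2m-1)!\,(2\pi)^{-2m}\bigr)^{d}\,\zeta_F(2m),
  \qquad d = [F:\mathbb{Q}].
\]
The steps are as follows.

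\textbf{Step 1: equal degrees.} Comparing the prime support of $r_m$ as $m\to\infty$ should force $d_1 = d_2$. If $d_1\neq d_2$, the factor $((2m-1)!)^{d_1-d_2}$ introduces primes in $(2m-1)!$ that lie outside $\Sigma$. Bounding their valuations requires controlling the growth of the $\zeta(2m)$ terms (which tend to $1$) and the transcendence of $\pi$; this is the delicate part of this step.

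\textbf{Step 2: equal discriminants and a clean limit.} Once $d_1=d_2$, the $\pi$-powers and factorials cancel, leaving
\[
  r_m = \bigl(|d_{k_1}|/|d_{k_2}|\bigr)^{2m-1/2}\,\frac{\zeta_{k_1}(2m)}{\zeta_{k_2}(2m)}.
\]
Since $\zeta_{k_i}(2m)\to 1$, I would deduce $|d_{k_1}| = |d_{k_2}|$ and $r_m\to 1$.

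\textbf{Step 3: equality of zeta functions.} Expand $\zeta_{k_1}(2m)-\zeta_{k_2}(2m)$ as a Dirichlet series. If the two zeta functions differ, this difference is asymptotic to $c\,N^{-2m}$, where $N$ is the smallest norm at which the ideal counts differ and $c\neq 0$. The aim is to show that such a decaying nonzero discrepancy is incompatible with $r_m$ being a $\Sigma$-unit for every $m$.

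I expect Step 3, and to a lesser degree Step 1, to be the main obstacle: nearness of $\Sigma$-units to $1$ alone is not a contradiction, because the $r_m$ have growing heights. My first attempt would be to bound those heights using the known denominators of $\zeta_F(1-2m)$ (von Staudt-type bounds coming from $w_{2m}(F)$). If that fails, the fallback is a $p$-adic argument. For every $p\notin\Sigma$, the $p$-adic valuations of $\zeta_{k_i}(1-2m)$ agree for all $m$, so via Deligne--Ribet $p$-adic $L$-functions and the main conjecture the characteristic ideals of the relevant Iwasawa modules coincide. I would then try to recover the equality of the permutation characters $\operatorname{Ind}_{H_1}^G\mathbb{Q}\simeq\operatorname{Ind}_{H_2}^G\mathbb{Q}$ from infinitely many such $p$, which is the criterion for (1).
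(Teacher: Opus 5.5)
\textbf{There is a genuine gap in (2)$\Rightarrow$(1).} Your arguments for (1)$\Rightarrow$(3)$\Rightarrow$(2) agree with the paper. So does your use of the Lichtenbaum formula (the paper's Theorem D) to turn (2) into $\operatorname{ord}_p\zeta_{k_1}(1-2m)=\operatorname{ord}_p\zeta_{k_2}(1-2m)$ for all $p\notin\Sigma$ and all $m\ge 1$. From there the paper invokes a nontrivial black box: Oh's theorem (Theorem E), which says the Dedekind zeta function of a totally real field is determined by these valuations outside a finite set of primes. Your Steps 1--3 do not replace it:
\begin{enumerate}[(a)]
\item Knowing that $r_m$ is a $\Sigma$-unit gives no archimedean control of $r_m$. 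Any $\prod_{\ell\in\Sigma}\ell^{e_\ell}$ is allowed, so Step 2's conclusion $|d_{k_1}|=|d_{k_2}|$ does not follow. For instance, if $|d_{k_1}|/|d_{k_2}|$ is itself a $\Sigma$-unit, nothing forces equality.
\item Step 1's prime-support heuristic is wrong. The primes of $(2m-1)!$ are cancelled by the denominators of the rationals $\zeta_F(2m)\pi^{-2md}\sqrt{|d_F|}$; for $F=\mathbb{Q}$ one gets $\zeta(1-2m)=-B_{2m}/2m$. So those primes need not appear in $r_m$ at all.
\item A von Staudt-type bound via $w_{2m}(F)$ controls only denominators. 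It says nothing about the $\ell$-adic valuations of the numerators of $\zeta_{k_i}(1-2m)$ for $\ell\in\Sigma$, and that is exactly what the height of $r_m$ depends on.
\item Your fallback points the wrong way. Agreement of characteristic ideals at $p\notin\Sigma$ is essentially what you already have, and you give no mechanism for extracting the permutation characters from it.
\end{enumerate}

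\textbf{How to close the gap.} What your route needs is control at the \emph{bad} primes $\ell\in\Sigma$. By Deligne--Ribet, on each branch of $2m$ modulo $\ell-1$ (single branch for $\ell=2$), the quantity $\zeta_F(1-2m)\prod_{\mathfrak l\mid \ell}(1-N\mathfrak l^{2m-1})$ is the value at $s=2m$ of a nonzero $\ell$-adic Iwasawa function. That function has finitely many zeros and at most a pole at $s=0$, and the Euler factor is an $\ell$-unit. Using the Chinese remainder theorem, you can then choose an infinite arithmetic progression of $m$ on which $\operatorname{ord}_\ell\zeta_{k_i}(1-2m)$ is constant for every $\ell\in\Sigma$ and $i=1,2$. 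Hence $|r_m|=|r|$ is constant on that progression, and your analytic steps go through:
\begin{enumerate}[(i)]
\item Comparing $\log|r|$ with the functional equation, the $m\log m$ term forces $d_1=d_2$.
\item The linear term in $m$ then forces $|d_{k_1}|=|d_{k_2}|$.
\item It follows that $r=\lim\zeta_{k_1}(2m)/\zeta_{k_2}(2m)=1$.
\item So $\zeta_{k_1}(2m)=\zeta_{k_2}(2m)$ for infinitely many $m$. This contradicts the leading-term asymptotic $c\,N^{-2m}$ of your Step 3 unless the two Dirichlet series coincide.
\end{enumerate}
Alternatively, cite Oh's result directly, as the paper does.
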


\begin{proof}
  The implication (3) $\implies$ (2) is clear. Moreover, (2) $\implies$ (1) follows from the following two theorems:
      
  \begin{themD}
    For a totally real algebraic number field $F$, the special values of the Dedekind zeta function of $F$ at negative integers can be written as
    \[
      \zeta_F(1-2k) = (-1)^{kr_1}\frac{\sharp K_{4k-2}(\mathcal{O}_F)_{\rm tors}}{\sharp K_{4k - 1}(\mathcal{O}_F)_{\rm tors}} \quad \textit{up to factors of} \ 2.
    \]
    \cite[Theorem 77]{Weibel3}
  \end{themD}
  \begin{themE}[Oh]
    Let $F$ be a totally real algebraic number field. Given a finite set of rational primes $S \subseteq P_{\mathbb{Q},\mathrm{f}}$, if the $p$-adic valuation $\operatorname{ord}_p(\zeta_F(1-i))$ of the special values of the Dedekind zeta function at negative integers is known for all primes $p \notin S$, then the Dedekind $\zeta$-function of $F$ can be reconstructed as a complex function\cite[Section 3]{Oh}.
  \end{themE}

  Finally, since (1) $\implies$ (3) follows from Corollary \ref{wengrtuan3oin}, the equivalence is established.
\end{proof}

    Let $k_1, k_2$ be number fields and let $L$ be an arbitrary finite Galois extension of $\mathbb{Q}$ containing them. Let their Galois groups be $G := \operatorname{Gal}(L/\mathbb{Q})$, $H_1 := \operatorname{Gal}(L/k_1)$, and $H_2 := \operatorname{Gal}(L/k_2)$. If there is an isomorphism of $\mathbb{Z}[G]$-modules $\operatorname{Ind}_{H_1}^G \mathbb{Z}_p \simeq \operatorname{Ind}_{H_2}^G\mathbb{Z}_p$ for all rational primes $p$, then $k_1, k_2$ are said to be \textit{locally integrally equivalent}\cite[Definition 3.2]{Sutherland} (the word "arbitrary" in the definition can be replaced by "some". Indeed, if the isomorphism of induced modules holds for some $L$, and we take a smaller field $L' \supseteq k_1k_2$ that is Galois over $\mathbb{Q}$, setting $N := \operatorname{Gal}(L/L')$ yields
\[
  \mathbb{Z}_p[(G/N)/(H_1/N)] \simeq \mathbb{Z}_p[G/H_1] \simeq \mathbb{Z}_p[G/H_2] \simeq \mathbb{Z}_p[(G/N)/(H_2/N)].
\]
The same argument applies when taking a larger field $L'$). Clearly, local integral equivalence is a stronger concept than arithmetical equivalence.

\begin{cor}
\label{nr4githto4wtio4at}
  For two number fields $k_1, k_2$ that are locally integrally equivalent, the relations
  \[
    K_{2n+1}(\mathcal{O}_{k_1}) \simeq K_{2n+1}(\mathcal{O}_{k_2}), \quad K_{2n}(\mathcal{O}_{k_1}) \otimes_{\mathbb{Z}} \mathbb{Z}[2^{-1}] \simeq K_{2n}(\mathcal{O}_{k_2}) \otimes_{\mathbb{Z}} \mathbb{Z}[2^{-1}]
  \]
  hold for any non-negative integer $n \geq 0$ (where the zeroth algebraic $K$-groups are isomorphic: $K_0(\mathcal{O}_{k_1}) \simeq K_0(\mathcal{O}_{k_2})$). In particular, if either $k_1$ or $k_2$ is totally imaginary, then $K_{2n}(\mathcal{O}_{k_1}) \simeq K_{2n}(\mathcal{O}_{k_2})$ holds, and these two number fields are $K$-equivalent.
\end{cor}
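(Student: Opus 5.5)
Since local integral equivalence is stronger than arithmetical equivalence, the odd-degree statement $K_{2n+1}(\mathcal{O}_{k_1}) \simeq K_{2n+1}(\mathcal{O}_{k_2})$ follows at once from the first part of Corollary \ref{wengrtuan3oin}. For even degrees, fix a finite Galois extension $L/\mathbb{Q}$ containing $k_1,k_2$, with $G,H_1,H_2$ as before. By the definition of local integral equivalence, $\operatorname{Ind}_{H_1}^G\mathbb{Z}_p \simeq \operatorname{Ind}_{H_2}^G\mathbb{Z}_p$ holds for \emph{every} prime $p$. Theorem \ref{hguhtuoiwntoir3} therefore gives $K_{2n}(\mathcal{O}_{k_1})[p^\infty]\simeq K_{2n}(\mathcal{O}_{k_2})[p^\infty]$ for every odd prime $p$ and every $n\ge 0$. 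If $k_1$ or $k_2$ is totally imaginary, it gives the same isomorphism for $p=2$ as well.

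Next we assemble these $p$-primary isomorphisms into an isomorphism of the groups themselves. For $n>0$, $K_{2n}(\mathcal{O}_F)$ is a finite abelian group (Borel/Quillen: the ranks of even $K$-groups of rings of integers vanish, and the groups are finitely generated). A finite abelian group is the direct sum of its $p$-primary parts. Hence
\[
K_{2n}(\mathcal{O}_{k_i})\otimes_{\mathbb{Z}}\mathbb{Z}[2^{-1}] \simeq \bigoplus_{p\neq 2} K_{2n}(\mathcal{O}_{k_i})[p^\infty],
\]
and the odd-primary isomorphisms give the claimed isomorphism after inverting $2$. In the totally imaginary case the $2$-primary parts also agree, so $K_{2n}(\mathcal{O}_{k_1})\simeq K_{2n}(\mathcal{O}_{k_2})$ as full groups.

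The case $n=0$ is handled separately through $K_0(\mathcal{O}_F)\simeq\mathbb{Z}\oplus\operatorname{Cl}(F)$. Theorem C applies at every prime $p$, including $p=2$, because its hypothesis is exactly the $\mathbb{Z}_p$-isomorphism of induced modules and it has no oddness restriction. It gives $\operatorname{Cl}(k_1)[p^\infty]\simeq\operatorname{Cl}(k_2)[p^\infty]$ for all $p$, hence $\operatorname{Cl}(k_1)\simeq\operatorname{Cl}(k_2)$ and $K_0(\mathcal{O}_{k_1})\simeq K_0(\mathcal{O}_{k_2})$ with no need to invert $2$.

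Combining the odd and even degrees, in the totally imaginary case all $K_n$ agree, which is $K$-equivalence. The argument is essentially bookkeeping. The one point needing care is the finiteness of $K_{2n}(\mathcal{O}_F)$ for $n>0$, since the decomposition into primary parts depends on it; we quote it from the standard literature (e.g.\ \cite{Weibel3}).
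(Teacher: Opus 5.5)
Your proposal is correct and follows the paper's proof. Odd degrees come from Corollary \ref{wengrtuan3oin}; odd-primary parts of even degrees (and, in the totally imaginary case, the $2$-primary parts) come from Theorem \ref{hguhtuoiwntoir3}; these are assembled using the finiteness of $K_{2n}(\mathcal{O}_F)$ for $n>0$, and $K_0$ is handled via Theorem C. Your explicit remark that Theorem C has no oddness restriction is what justifies the full isomorphism $K_0(\mathcal{O}_{k_1})\simeq K_0(\mathcal{O}_{k_2})$, a point the paper leaves implicit.
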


\begin{proof}
  From Theorem \ref{hguhtuoiwntoir3} and Theorem C, the isomorphism of the $p$-primary parts of the $K$-groups of even degree follows for any odd prime $p$. For a general number field $F$, we have $K_0(\mathcal{O}_F) \simeq \mathbb{Z} \oplus \operatorname{Cl}(F)$, and $K_{2n}(\mathcal{O}_F)$ is a finite abelian group for $n > 0$; thus, we obtain the isomorphism $K_{2n}(\mathcal{O}_{k_1}) \otimes_{\mathbb{Z}} \mathbb{Z}[2^{-1}] \simeq K_{2n}(\mathcal{O}_{k_2}) \otimes_{\mathbb{Z}} \mathbb{Z}[2^{-1}]$ where $2$ is inverted. Moreover, since local integral equivalence implies arithmetical equivalence, the isomorphism of the $K$-groups of odd degree also follows from Corollary \ref{wengrtuan3oin}.
\end{proof}

\begin{rem}
  Even if two number fields are $K$-equivalent, they are not necessarily locally integrally equivalent. Indeed,
  \[
    k_1 := \mathbb{Q}(X^7 - 7X + 3), \quad k_2 := \mathbb{Q}(X^7 + 14X^4 - 42X^2 - 21X + 9)
  \]
  are non-isomorphic and $K$-equivalent, but they are not locally integrally equivalent. The fact that they are $K$-equivalent can be shown as follows: $k_1$ and $k_2$ are arithmetically equivalent\cite{Perlis}, and we can compute $\nu_{k_1,k_2} = 2$\cite[Example 1]{Perlis2}. Therefore, the relations
  \[
    K_{2n+1}(\mathcal{O}_{k_1}) \simeq K_{2n+1}(\mathcal{O}_{k_2}), \quad K_{2n}(\mathcal{O}_{k_1}) \otimes_{\mathbb{Z}} \mathbb{Z}[2^{-1}] \simeq K_{2n}(\mathcal{O}_{k_2}) \otimes_{\mathbb{Z}} \mathbb{Z}[2^{-1}]
  \]
  hold for $n \geq 0$. It remains to verify that $K_{2n}(\mathcal{O}_{k_1})[2^\infty] \simeq K_{2n}(\mathcal{O}_{k_2})[2^\infty]$. For both $k_1$ and $k_2$, there is exactly one prime ideal lying over $2$, and the narrow class groups of $\mathcal{O}_{k_1}$ and $\mathcal{O}_{k_2}$ are trivial (computations were performed using SageMath\cite{Sage}). Thus, $k_1$ and $k_2$ are fields known as $2$-regular fields. In general, if a number field $F$ is a $2$-regular field, the $2$-primary part of its algebraic $K$-groups can be computed as
  \[
    K_{2n}(\mathcal{O}_F)[2^\infty] \simeq \begin{cases}
        (\mathbb{Z}/2)^{r_1} & (n\equiv 1 \pmod{4}) \\
        0 & (\text{otherwise})
    \end{cases}
  \]
  for $n > 0$, where $r_1$ is the number of real primes of $F$\cite[Example 89]{Weibel3}. Since the number of real primes is the same for a pair of arithmetically equivalent fields\cite[Theorem 1]{Perlis}, their $2$-primary parts are all isomorphic as abelian groups (note that since the narrow class groups of $\mathcal{O}_{k_i}$ are trivial, the ideal class groups of $k_1$ and $k_2$ are also trivial, and hence isomorphic).
\end{rem}

\begin{cor}
  \label{rhgtiwo94gniojwtnjowt}
  There exist infinitely many pairs of non-isomorphic and $K$-equivalent number fields.
\end{cor}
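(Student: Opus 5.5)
The plan is to deduce the statement from Corollary \ref{nr4githto4wtio4at}. That corollary shows that two totally imaginary, locally integrally equivalent number fields are $K$-equivalent. So it suffices to produce infinitely many pairs $(k_1,k_2)$ of non-isomorphic number fields that are locally integrally equivalent and totally imaginary. The strategy has two parts: fix one finite group $G$ with two non-conjugate subgroups $H_1,H_2$ giving isomorphic permutation lattices $\mathbb{Z}_p[G/H_1]\simeq\mathbb{Z}_p[G/H_2]$ for every prime $p$, and then realize $G$ as a Galois group in infinitely many ways.

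For the group-theoretic input, I would take the classical example from the literature on locally integrally equivalent fields (Scott's example, or the examples in \cite{Sutherland}). One candidate is $G=\mathrm{PSL}_2(\mathbb{F}_{29})$ with two non-conjugate subgroups isomorphic to $A_5$, for which $\mathbb{Z}[G/H_1]\simeq\mathbb{Z}[G/H_2]$ even over $\mathbb{Z}$; hence the isomorphism holds over every $\mathbb{Z}_p$. For any Galois extension $L/\mathbb{Q}$ with group $G$, the fixed fields $k_i=L^{H_i}$ are then locally integrally equivalent by the definition preceding Corollary \ref{nr4githto4wtio4at}. They are non-isomorphic: an isomorphism $k_1\simeq k_2$ inside $\mathbb{Q}^{\rm sep}$ would make $H_1$ and $H_2$ conjugate in $G$, because $L$ is the Galois closure, as $H_i$ contains no nontrivial normal subgroup ($G$ being simple).

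Next I need infinitely many such $L$ with $k_i$ totally imaginary. Total imaginarity of $k_1$ means that no complex conjugation $c\in G$ (an element of order $\le 2$) fixes a coset in $G/H_1$, i.e.\ $c$ is not conjugate into $H_1$. A clean way to force this is to take $G$ of odd order, but the standard examples have even order. Instead, I would use a realization in which complex conjugation is trivial in $L$, so that $L$ itself is totally imaginary. Here I would take $L'=L\cdot\mathbb{Q}(\sqrt{-d})$ for infinitely many squarefree $d>0$ with $\mathbb{Q}(\sqrt{-d})\not\subseteq L$. Then $\operatorname{Gal}(L'/\mathbb{Q})=G\times C_2$, and I set $k_i'=k_i(\sqrt{-d})$, which corresponds to $H_i\times 1$. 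The permutation modules satisfy
\[
\mathbb{Z}_p[(G\times C_2)/(H_i\times 1)]\simeq \mathbb{Z}_p[G/H_i]\otimes_{\mathbb{Z}_p}\mathbb{Z}_p[C_2],
\]
so they remain isomorphic for every $p$. Moreover, $k_i'$ is totally imaginary, since complex conjugation maps to a nontrivial element of the $C_2$-factor, which is not in $H_i\times 1$ up to conjugacy. The fields $k_i'$ are non-isomorphic because $H_1\times1$ and $H_2\times 1$ are non-conjugate. Different $d$ give different pairs, since $k_i'$ determines $\mathbb{Q}(\sqrt{-d})$ as the fixed field of $G\times 1$ in its Galois closure $L'$. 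This yields infinitely many pairs from a single $L$.

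The main obstacle is the existence of a single Galois extension $L/\mathbb{Q}$ with group $G$, for a $G$ carrying a locally integrally equivalent non-conjugate pair. I would cite the realization of $\mathrm{PSL}_2(\mathbb{F}_q)$ over $\mathbb{Q}$ (rigidity, or modular Galois representations), or choose a solvable example from \cite{Sutherland} and invoke Shafarevich's theorem. Compared with this, checking the permutation-module isomorphism and the non-isomorphism of the pairs is routine.
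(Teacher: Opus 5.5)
Your proof is correct. Its core is the same as the paper's: two non-conjugate $A_5$'s in a $\mathrm{PSL}_2(\mathbb{F}_p)$-extension give a non-isomorphic, locally integrally equivalent pair. You then adjoin an imaginary abelian field to make the pair totally imaginary, and Corollary \ref{nr4githto4wtio4at} finishes.

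The difference is where the infinitude comes from, and how local integral equivalence passes to the composita.

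\textbf{The paper's route.} It first proves a general stability statement: $k_1M$ and $k_2M$ are locally integrally equivalent for every finite Galois $M$. This uses the orbit decomposition of $G/H_i\times G/N$ together with Krull--Schmidt over $\mathbb{Z}_p$. It then lets $p$ run over the infinitely many primes $\equiv \pm 29 \pmod{120}$ and adjoins $\mathbb{Q}(\mu_\ell)$. Pairs are told apart by degree. This needs Galois realizations of $\mathrm{PSL}_2(\mathbb{F}_p)$ for infinitely many $p$.

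\textbf{Your route.} You fix one $\mathrm{PSL}_2(\mathbb{F}_{29})$-extension $L$ and twist it by infinitely many $\mathbb{Q}(\sqrt{-d})$. Such an $L$ exists, e.g.\ by Shih's theorem, since $2$ is a non-residue mod $29$. Because $G$ is simple, $L$ has no quadratic subfield, so $L\cap\mathbb{Q}(\sqrt{-d})=\mathbb{Q}$ holds for every $d$. Hence the Galois group is automatically $G\times C_2$. The permutation lattice for $H_i\times 1$ is then an outer tensor product, so no Krull--Schmidt argument is needed. Your separation of the pairs is also sound, though it is worth saying why. Since $G$ is perfect, $G\times 1$ is the unique index-two subgroup of $G\times C_2$. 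So $\mathbb{Q}(\sqrt{-d})$ is the unique quadratic subfield of $k_i'$, and different $d$ give non-isomorphic fields.

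\textbf{Comparison.} Your route is more economical: one realization of one group, and a simpler module computation. The paper's route yields a more general compositum lemma and pairs of unbounded degree.

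One small slip: ``complex conjugation is trivial in $L$'' would make $L$ totally real, not totally imaginary. What you actually use, and what works, is that $k_i'$ contains the imaginary quadratic field $\mathbb{Q}(\sqrt{-d})$.
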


\begin{proof}
  In general, if $k_1$ and $k_2$ are locally integrally equivalent, then for any finite Galois extension $M$ of $\mathbb{Q}$, the fields $k_1M$ and $k_2M$ are also locally integrally equivalent. Indeed, letting $L$ be a Galois closure of $\mathbb{Q}$ containing $k_1$ and $k_2$, the field $LM$ is finite Galois over $\mathbb{Q}$ and contains $k_1M$ and $k_2M$. Setting $G := \operatorname{Gal}(LM/\mathbb{Q})$, $H_1 := \operatorname{Gal}(LM/k_1)$, $H_2 := \operatorname{Gal}(LM/k_2)$, and $N := \operatorname{Gal}(LM/M)$, the subgroups corresponding to $k_iM$ are given by $H_i \cap N$ for $i=1,2$.
  
  By assumption, for any prime $p$, the isomorphism $\mathbb{Z}_p[G/H_1] \simeq \mathbb{Z}_p[G/H_2]$ holds as $\mathbb{Z}_p[G]$-modules (employing the argument immediately preceding Corollary \ref{nr4githto4wtio4at}). Tensoring this with $\mathbb{Z}_p[G/N]$ over $\mathbb{Z}_p$ yields the isomorphism
  \begin{align*}
    \mathbb{Z}_p[G/H_1 \times G/N] &\simeq \mathbb{Z}_p[G/H_1] \otimes_{\mathbb{Z}_p} \mathbb{Z}_p[G/N] \\
    &\simeq \mathbb{Z}_p[G/H_2] \otimes_{\mathbb{Z}_p} \mathbb{Z}_p[G/N] \\
    &\simeq \mathbb{Z}_p[G/H_2\times G/N]. 
  \end{align*}
  Since $N$ is a normal subgroup of $G$, every $G$-orbit of $G/H_i \times G/N$ is isomorphic to $G/(H_i \cap N)$. Let $c_i$ denote the number of orbits. then we obtain the direct sum decomposition as modules:
  \[
    \mathbb{Z}_p[G/H_i \times G/N] \simeq (\mathbb{Z}_p[G/(H_i \cap N)])^{c_i}
  \]
  for $i=1,2$, which implies the isomorphism $(\mathbb{Z}_p[G/(H_1 \cap N)])^{c_1} \simeq (\mathbb{Z}_p[G/(H_2 \cap N)])^{c_2}$. These numbers $c_i$ can be computed as
  \[
    c_i = \sharp (G \backslash (G/H_i \times G/N)) = \operatorname{rank}_{\mathbb{Z}_p}(\mathbb{Z}_p[G/H_i \times G/N]_G)
  \]
  (where for a left $G$-module $M$, we define $M_G := M/\langle g\cdot m - m\ | \ g\in G, m \in M\rangle$), from which $c_1 = c_2$ follows. Therefore, the Krull-Schmidt theorem yields $\mathbb{Z}_p[G/(H_1 \cap N)] \simeq \mathbb{Z}_p[G/(H_2 \cap N)]$. Consequently, $k_1M$ and $k_2M$ are locally integrally equivalent.

  There exist infinitely many pairs of non-isomorphic, locally integrally equivalent number fields. In \cite[Theorem 3.9, Remark 3.11]{Sutherland}, it is shown that for a prime $p \equiv \pm 29 \pmod{120}$ and an extension $L/\mathbb{Q}$ with Galois group $G := \mathrm{PSL}_2(\mathbb{F}_p)$ (which always exists), there exist non-conjugate subgroups $H_1, H_2$ of $G$ that are isomorphic to the alternating group $A_5$ such that their fixed fields induce a solvable equivalence (and solvable equivalence implies local integral equivalence). Choose such a prime $p_1$ and consider a pair of non-isomorphic and solvably equivalent number fields $k_{1,1}, k_{1,2}$, and pick a prime $\ell_1$ such that $k_{1,1}\cap \mathbb{Q}(\mu_{\ell_1}) = k_{1,2} \cap \mathbb{Q}(\mu_{\ell_1}) = \mathbb{Q}$. Then, $k_{1,1}\mathbb{Q}(\mu_{\ell_1})$ and $k_{1,2}\mathbb{Q}(\mu_{\ell_1})$ form a pair of non-isomorphic, totally imaginary, locally integrally equivalent fields. The order of $\mathrm{PSL}_2(\mathbb{F}_{p_1})$ is $p_1(p_1^2 - 1)/2$ and the order of the alternating group $A_5$ is $5!/2 = 60$; thus, the degree of the number fields $k_{1,1}\mathbb{Q}(\mu_{\ell_1})$ and $k_{1,2}\mathbb{Q}(\mu_{\ell_1})$ is computed as $p_1(p_1^2-1)(\ell_1 - 1)/120$. By Dirichlet's theorem on arithmetic progressions, there are infinitely many primes $p$ satisfying $p \equiv \pm 29 \pmod{120}$. Thus, by picking a prime $p_2$ greater than $p_1$ that satisfies the congruence, we obtain a new pair of solvably equivalent number fields $k_{2,1}, k_{2,2}$. Furthermore, choosing a prime $\ell_2$ similarly such that $k_{2,1}\cap \mathbb{Q}(\mu_{\ell_2}) = k_{2,2}\cap \mathbb{Q}(\mu_{\ell_2}) = \mathbb{Q}$ and $\ell_2 \geq \ell_1$, the inequality
  \[
    \frac{p_2(p_2^2 - 1)(\ell_2 - 1)}{120} > \frac{p_1(p_1^2 - 1)(\ell_1 - 1)}{120}
  \]
  holds. Hence, $(k_{1,1}\mathbb{Q}(\mu_{\ell_1}), k_{1,2}\mathbb{Q}(\mu_{\ell_1}))$ and $(k_{2,1}\mathbb{Q}(\mu_{\ell_2}), k_{2,2}\mathbb{Q}(\mu_{\ell_2}))$ are distinct pairs of number fields. Repeating this process infinitely many times yields a family of non-isomorphic, totally imaginary, and locally integrally equivalent pairs of number fields $\{(k_{i,1}\mathbb{Q}(\mu_{\ell_i}), k_{i,2}\mathbb{Q}(\mu_{\ell_i}))\}_{i\geq 1}$. Since this constitutes a family of pairs of $K$-equivalent number fields, the corollary follows.
\end{proof}


\begin{thebibliography}{99}
        \bibitem{Perlis} Perlis, Robert, \textit{On the equation $\zeta_{K}(s)= \zeta_{K'}(s)$}, Journal of number theory, \textbf{9} (1977), 342-360.
        \bibitem{Perlis2} Perlis, Robert, \textit{On the class numbers of arithmetically equivalent fields}, Journal of Number Theory, \textbf{10} (1978), 489-509.
        \bibitem{Perlis3} Bart de Smit and Robert Perlis., \textit{Zeta functions do not determine class numbers}, Bulletin of the American Mathematical Society, \textbf{31} (1994), 213–215.
        \bibitem{NSW} Neukirch, Jürgen, Alexander Schmidt, and Kay Wingberg. \textit{Cohomology of number fields}, Vol. 323. Springer Science \& Business Media, 2013.
        \bibitem{Jannsen} U. Jannsen, \textit{Continuous \'{e}tale cohomology,} Math. Ann. \textbf{280} (1988), 207–245, MR 89a:14022
        \bibitem{Phagan} Phagan, Shaver, \textit{Corresponding Abelian Extensions of Integrally Equivalent Number Fields}, Journal of Number Theory \textbf{280} (2025), 88-112.
        \bibitem{Milne} Milne, James S., \textit{\'{e}tale cohomology (PMS-33)}, No. 33, Princeton university press, 1980.
        \bibitem{Milne2} Milne, James S., \textit{Arithmetic duality theorems}, Vol. 4, Charleston, SC: BookSurge, LLC, 2006.
        \bibitem{Oh} Oh, Jangheon, \textit{On zeta functions and Iwasawa modules}, Transactions of the American Mathematical Society \textbf{350} (1998), 3639-3655.
        \bibitem{Sutherland} Sutherland, Andrew V., \textit{Stronger arithmetic equivalence}, preprint arXiv:2104.01956 (2021).
        \bibitem{Komatsu} Komatsu, K., \textit{Eine Bemerkung über Dedekindsche Zetafunktionen und $K$-Gruppe}, Archiv der Mathematik \textbf{54} (1990), 164-165.
        \bibitem{Weibel3} Charles Weibel, \textit{Algebraic $K$-Theory of Rings of Integers in Local and Global Fields,} Springer Books, in: Eric M. Friedlander \& Daniel R. Grayson (ed.), Handbook of $K$-Theory, chapter 0, 139-190, Springer, 2005.
        \bibitem{Sage} SageMath, the Sage Mathematics Software System (Version 10.7). The Sage Developers, 2025, https://www.sagemath.org.
    \end{thebibliography}
\end{document}